\documentclass[a4paper,12pt,reqno]{amsart}
\usepackage[T1]{fontenc}
\usepackage[utf8]{inputenc}
\usepackage{amssymb,bbm,mathrsfs}
\usepackage{amsmath}
\usepackage{amsthm}
\usepackage[margin=1in]{geometry}
\usepackage{enumitem}
\usepackage{xcolor}
\usepackage[bookmarksdepth=2]{hyperref}
\usepackage{graphicx}
\usepackage{tikz}
\definecolor{mygreen1}{RGB}{0,73,44} 
\definecolor{mygreen2}{RGB}{0,140,81}

\newtheorem{theorem}{Theorem}[section]
\newtheorem{lemma}[theorem]{Lemma}
\newtheorem{corollary}[theorem]{Corollary}
\newtheorem{proposition}[theorem]{Proposition}

\theoremstyle{definition}
\newtheorem{definition}[theorem]{Definition}

\newtheorem{remark}[theorem]{Remark}

\DeclareMathOperator{\re}{Re}
\DeclareMathOperator{\Res}{Res}
\DeclareMathOperator{\im}{Im}
\DeclareMathOperator{\sign}{sign}

\newcommand{\leb}{\mathscr{L}}

\newcommand{\fourier}{\mathscr{F}}

\newcommand{\laplace}{\mathscr{L}}

\newcommand{\pvint}{\operatorname{p{.}v{.}}\!\int}

\newcommand{\ph}{\varphi}
\newcommand{\eps}{\varepsilon}
\newcommand{\thet}{\vartheta}

\newcommand{\mdef}{\mathrel{\mathop:}=}

\newcommand{\Fpos}{F^+}
\newcommand{\Fneg}{F^-}
\newcommand{\Gpos}{G^+}
\newcommand{\Gneg}{G^-}

\renewcommand{\le}{\leqslant}
\renewcommand{\ge}{\geqslant}

\usepackage{ifthen}
\newcommand{\formula}[2][nolabel]%
{%
 \ifthenelse{\equal{#1}{nolabel}}%
 {\begin{align*} #2 \end{align*}}%
 {%
  \ifthenelse{\equal{#1}{}}%
  {\begin{align} #2 \end{align}}%
  {\begin{align} \label{#1} #2 \end{align}}%
 }%
}

\newcommand{\RR}{\mathbb{R}}
\newcommand{\EE}{\mathbb{E}}

\newcommand{\CC}{\mathbb{C}}

\newcommand{\Ha}{\mathcal{H}}

\theoremstyle{definition}

\usepackage{ifthen}

\newcommand{\C}{\mathbb{C}}
\newcommand{\PP}{\mathbb{P}}

\newcommand{\Arg}{\operatorname{Arg}}
\renewcommand{\Re}{\operatorname{Re}}
\renewcommand{\Im}{\operatorname{Im}}
\theoremstyle{definition}

\begin{document}
\title[Spectral theory]{Spectral theory for one-dimensional (non-symmetric) stable processes killed upon hitting the origin}
\author{Jacek Mucha}
\thanks{Work supported by National Science Centre (NCN), Poland, under grant 2015/19/B/ST1/01457}
\address{Jacek Mucha \\ Faculty of Pure and Applied Mathematics \\ Wrocław University of Science and Technology \\ ul. Wybrzeże Wyspiańskiego 27 \\ 50-370 Wrocław, Poland}
\email{jacek.mucha@pwr.edu.pl}
\date{\today}
\keywords{stable process, spectral theory}

\begin{abstract}
We obtain an integral formula for the distribution of the first hitting time of the origin for one-dimensional $\alpha$-stable processes $X_t$, where $\alpha\in(1,2)$. We also find a spectral-type integral formula for the transition operators $P_0^t$ of $X_t$ killed upon hitting the origin. Both expressions involve exponentially growing oscillating functions, which play a role of generalised eigenfunctions for $P_0^t$.
\end{abstract}

\maketitle

\section{Introduction}

The main purpose of this article is to extend the results of~\cite{symzero1}, where a large class of symmetric Lévy processes was considered, to non-symmetric stable Lévy processes. For such a process $X_t$, we study the hitting time of the origin, and transition operators of the process $X_t$ killed upon hitting the origin. We construct appropriate `generalised eigenfunctions' $\Fpos(s x)$ and $\Fneg(s x)$,  and we provide expressions similar to those of~\cite{symzero1}. In our case, however, the functions $\Fpos$ and $\Fneg$ are no longer bounded; in fact, they grow exponentially fast, and thus the methods of~\cite{symzero1} need to be substantially modified. Our approach is similar to that of~\cite{specHL}, where similar problems for hitting a half-line are studied. However, we avoid the use of special functions. Instead, we consider integral-type expressions, related to some extend to~\cite{rogers3}; see also the prelimiary version~\cite{rogers} of that article.

Hitting times for Markov processes are one of the fundamental objects in probabilistic potential theory, with numerous applications to other areas of mathematics. Various results for the Brownian motion are collected in~\cite{borodin}, Appendix~1. Hitting times for symmetric stable Lévy processes have already been studied in 1960s, see, for example, \cite{blumenthal}. In~\cite{doney} a formula for the density of the hitting time of the origin was obtained for spectrally positive L\'{e}vy processes. The theory was further developed in~\cite{pakes}, and in~\cite{peskir} a series expansion of the density of the hitting time of the origin was found for stable processes with no negative jumps. Further results for completely asymmetric stable processes were presented in~\cite{simon}, where series representations for the density are presented, and in Section 46 of~\cite{sato}. The Mellin transform for the hitting time of zero is given in~\cite{kuznetsov}. Additionally, in~\cite{simon}, hitting times of points are proved to be unimodal when $\alpha\leqslant 3/2$. Later, in~\cite{letemplier}, unimodality was proven for $\alpha\in (1,2]$. More general results about unimodality of hitting times for Markov processes can be found in~\cite{rosler}.  Asymptotic analysis of the hitting times of points can also be found in~\cite{kesten,port,yano}. Estimates for hitting times of points for more general symmetric L\'{e}vy processes were obtained under some mild regularity assumptions in~\cite{grzywny}.

Obviously, hitting times and distributions for stable processes have been studied also for more general sets. These results are, however, of much different nature, and we only mention some examples that are at least remotely related to our work. Hitting distributions of the interval $[-1,1]$ or its complement $\RR \setminus (-1, 1)$ have been found in~\cite{kyprianou1} and~\cite{rogozin}, respectively; see also~\cite{profeta:simon} for further discussion and references. Hitting times of half-lines, called \emph{first passage times}, are of particular interest, being the main subject of fluctuation theory for Lévy processes; we mention here~\cite{graczyk:jakubowski,kuznetsov:extrema,specHL}.

As mentioned above, spectral theory of symmetric L\'{e}vy processes killed upon hitting $\{ 0 \}$ is developed in~\cite{symzero1}. Further work in this are can be found in~\cite{tomek}, where a narrower class of symmetric L\'evy processes with completely monotone jumps is studied. Similar work for symmetric processes in half-line can be found in~\cite{bm} and~\cite{kmr}, which extend the former work~\cite{cauchy} on the Cauchy process. Non-symmetric stable processes in half-line have been studied in a similar way in~\cite{specHL}; see also~\cite{rogers,rogers3} for preliminary results for more general non-symmetric Lévy processes with completely monotone jumps. We note that spectral theory of non-symmetric Markov processes on the half-line was also studied in~\cite{mandl} (one-dimensional diffusion processes), \cite{ogura} (branching processes) and~\cite{patie} (non-self-adjoint Markov semigroups).

Our main goal is to extend the results from~\cite{symzero1} to the class of non-symmetric $\alpha$-stable processes, $\alpha\in(1,2)$. The symmetric case is much easier, mainly because in this case the characteristic exponent of the process is real. This property is crucial for the method developed in~\cite{symzero1}. However, the tools developed in~\cite{specHL} allow to follow some of the arguments from~\cite{symzero1}, after appropriate deformation of the contour of integration to the line along which characteristic exponent takes real values.

Let us briefly motivate the form of our main result, Theorem~\ref{thm:3}. If $\tau$ is the hitting time of the complement of a compact set $D$ for a sufficiently regular symmetric Markov process $X_t$, then the transition operators $P^D_t$ of the process $X_t$ killed at $\tau$ are compact operators on $L^2(D)$, and it is easy to find spectral expansion of $\PP^x(\tau > t)$: we have
\[
 \PP^x(\tau>t) = \sum_{n = 1}^\infty e^{-\lambda_n t} \ph_n(x) \int_D \ph_n(y) dy ,
\]
where $(\ph_n : n = 1, 2, \ldots)$ is a complete orthonormal system of eigenfunctions of $P^D_t$, with corresponding eigenvalues $e^{-\lambda_n t}$. We refer to~\cite{getoor} for a rigorous discussion of an analogous description of the transition density (the kernel of $P^D_t$). When $D$ is unbounded and $P^D_t$ fail to be compact operators, one can expect that a continuous variant of the above expansion holds:
\[
 \PP^x(\tau>t) = \int_S e^{-\lambda(s) t} \ph_s(x) \biggl(\int_D \ph_s(y) dy \biggr) m(ds) ,
\]
where $\ph_s$ are \emph{generalised eigenfunctions} (or \emph{resonanses}) of $P^D_t$ with \emph{generalised eigenvalues} $\lambda(s)$. Here $S$ is some parameter set, and $m$ is an appropriate meusure on $S$; again we refer to~\cite{getoor} for a rigorous discussion of such expansion for transition densities. Similar problem for non-symmetric processes are generally much harder. However, in certain cases one can hope for similar expansions. In the compact case, if $P^D_t$ admit a complete system of eigenfunctions $\ph_n^-$ and co-eigenfunctions $\ph_n^+$, then it is expected that
\[
 \PP^x(\tau>t) = \sum_{n = 1}^\infty e^{-\lambda_n t} \ph_n^-(x) \int_D \ph_n^+(y) dy .
\]
Similar expressions are possible in the non-compact case, of the form
\[
 \PP^x(\tau>t) = \int_S e^{-\lambda(s) t} \ph_s^-(x) \biggl(\int_D \ph_s^+(y) dy \biggr) m(ds) .
\]
The article~\cite{specHL} derives a formula of the form given above for the first exit time from $(0,\infty)$ for non-symmetric $\alpha$-stable processes. Here we prove an analogous result for the hitting time of $0$. In either case the generalised eigenfunctions $\ph_s^-(x) = F(s x)$ have exponential growth; namely, we (roughly) have $F(x)=e^{a x}\sin(b x + c) - G(x)$ for a reasonably small remainder term $G$. As we shall see below, this rapid growth of $F$ is a constant source of problems in applications of Fubini's theorem, invertions of Laplace transforms etc.

Much of the inspiration for the present work also came from the theory of \emph{Rogers functions}~(\cite{rogers,rogers3}). The characteristic exponent of a stable process is a particularly simple example of a Rogers function. Many of the results presented below seem to extend to more general Rogers functions, which suggests that our main results can possibly be extended to more general Lévy processes with completely monotone jumps.

\subsection{Main results}
Let $X_t$ be the $\alpha$-stable process with index $\alpha \in (1, 2)$ and positivity parameter $\rho \in [1 - \tfrac{1}{\alpha}, \tfrac{1}{\alpha}]$. We assume that $X_t$ is normalised in such a way that if $\psi$ is the characteristic exponent of $X_t$, then $|\psi(1)| = 1$. Let
\[
 \theta = (1 - 2 \rho) \tfrac{\pi}{2} ,
\]
so that $\psi(\xi) = e^{-i \alpha \theta} |\xi|^\alpha$ for $\xi > 0$ and $\psi(\xi) = e^{i \alpha \theta} |\xi|^\alpha$ for $\xi < 0$. We remark that our assumption $\alpha \in (1, 2)$ is not restrictive: if $\alpha \leqslant 1$, then, with probability one, $X_t$ never hits $0$.

Define the functions $\Gpos$ and $\Gneg$ by the formulas
\begin{equation}
\label{eq:gdef}
\begin{aligned}
\Gpos(x) = \Gneg(-x) & = \frac{\alpha \sin \tfrac{\pi}{\alpha}}{\pi} \int_0^\infty \frac{t^\alpha \sin(\alpha (\tfrac{\pi}{2} - \theta))}{t^{2 \alpha} - 2 t^\alpha \cos(\alpha (\tfrac{\pi}{2} - \theta)) + 1} \, e^{-t x} dt , \\
\Gpos(-x) = \Gneg(x) & = \frac{\alpha \sin \tfrac{\pi}{\alpha}}{\pi} \int_0^\infty \frac{t^\alpha \sin(\alpha (\tfrac{\pi}{2} + \theta))}{t^{2 \alpha} - 2 t^\alpha \cos(\alpha (\tfrac{\pi}{2} + \theta)) + 1} \, e^{-t x} dt
\end{aligned}
\end{equation}
for $x > 0$. Define furthermore
\[
\Fpos(x) = \Fneg(-x) = e^{-x \sin\theta} \sin\bigl(|x| \cos\theta + \theta \sign x + \tfrac{\pi}{\alpha} - \tfrac{\pi}{2} \bigr) - \Gpos(x) .
\]
Note that $\Gneg$ and $\Fneg$ are given by the same expressions as $\Gpos$ and $\Fpos$, with $\rho$ replaced by $1 - \rho$ (that is, with $\theta$ changed to $-\theta$). By $\PP^x$ and $\EE^x$ we denote the probability and expectation corresponding to the process $X_t$ started at $x$. The following theorem is the first main result of the paper.

\begin{theorem} 
\label{thm:3} Let $\tau_0$ be the first hitting time of $\{ 0 \}$ for the process $X_t$. Then
\begin{equation}
\label{formula:1}
\PP^x(\tau_0 > t) = \frac{1}{\pi\cos\theta}\int_0^\infty \frac{e^{-s^\alpha t}}{s} \, \Fneg (sx)ds.
\end{equation}
for $x \ne 0$ and $t>0$.
\end{theorem}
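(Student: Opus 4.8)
The plan is to pass to the Laplace transform in $t$ and to use uniqueness. Since $\alpha>1$, the point $\{0\}$ is non-polar, $u_\lambda(0)<\infty$, and the strong Markov property applied at $\tau_0$ gives $\EE^x[e^{-\lambda\tau_0}]=u_\lambda(-x)/u_\lambda(0)$, where $u_\lambda$ denotes the $\lambda$-resolvent density of $X_t$; hence, for $\lambda>0$,
\[
\int_0^\infty e^{-\lambda t}\,\PP^x(\tau_0>t)\,dt=\frac1\lambda\Bigl(1-\frac{u_\lambda(-x)}{u_\lambda(0)}\Bigr).
\]
By Fourier inversion $u_\lambda(z)=\frac1{2\pi}\int_\RR\frac{e^{-i\xi z}}{\lambda+\psi(\xi)}\,d\xi$, and a scaling substitution (convergent because $\alpha>1$) yields $u_\lambda(0)=\tfrac{\cos\theta}{\alpha\sin(\pi/\alpha)}\,\lambda^{1/\alpha-1}$. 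Writing $R(x,t)$ for the right-hand side of~\eqref{formula:1}, it therefore suffices to prove that $R(x,\cdot)$ is a bounded continuous function on $(0,\infty)$ with
\[
\int_0^\infty e^{-\lambda t}R(x,t)\,dt=\frac1\lambda\Bigl(1-\frac{u_\lambda(-x)}{u_\lambda(0)}\Bigr),\qquad\lambda>0,
\]
after which the two completely monotone-free Laplace transforms being equal forces $R(x,t)=\PP^x(\tau_0>t)$.

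Both sides of the last display are then evaluated by the contour method of~\cite{specHL}. On the resolvent side, split $u_\lambda(0)-u_\lambda(-x)=\frac1{2\pi}\int_\RR\frac{1-e^{i\xi x}}{\lambda+\psi(\xi)}\,d\xi$ into its two half-line pieces and rotate each contour: since $\psi(\xi)=e^{\mp i\alpha\theta}|\xi|^\alpha$, rotating by $\pm\theta$ (or, for the incompatible sign of $x$, onto the imaginary axis) straightens $\psi$, Cauchy's theorem applies because $(\lambda+\psi)^{-1}\sim|\xi|^{-\alpha}$ kills the arcs at infinity, and the only pole of $(\lambda+\psi(\xi))^{-1}$ sits at argument $\tfrac\pi\alpha\pm\theta$, which lies outside the swept sector since $|\theta|\le\kappa\mdef\tfrac\pi\alpha-\tfrac\pi2<\tfrac\pi\alpha$. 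After the substitution one recognises the Laplace-type integrands defining $\Gpos,\Gneg$ together with the elementary oscillating term built into $\Fneg$; the bookkeeping of constants reduces to the Mellin evaluation $\int_0^\infty\frac{u^{1/\alpha}}{u^2-2u\cos\gamma+1}\,du=\frac{\pi\sin((\pi-\gamma)/\alpha)}{\sin(\pi/\alpha)\sin\gamma}$, which in particular gives $\Gpos(0)=\sin(\kappa+\theta)$ and hence the vanishing $\Fpos(0)=\Fneg(0)=0$ used to make $\tfrac1s\Fneg(sx)$ integrable near $s=0$.

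The genuinely delicate half is evaluating $\int_0^\infty e^{-\lambda t}R(x,t)\,dt$, because the leading term of $\Fneg$ is $e^{\pm sx\sin\theta}\sin(sx\cos\theta+\cdots)=\Im(c\,e^{\pm isxe^{\mp i\theta}})$, so $\Fneg(sx)$ grows exponentially in $s$ for one sign of $x$, and Fubini's theorem cannot be applied to $\int_0^\infty e^{-\lambda t}\int_0^\infty s^{-1}e^{-s^\alpha t}\Fneg(sx)\,ds\,dt$ directly; the naive inner integral after swapping even diverges. The remedy, again from~\cite{specHL}, is to first rotate the inner contour in the definition of $R(x,t)$ from $(0,\infty)$ to the ray $\{\sigma e^{i\theta}:\sigma>0\}$ — working separately on $\{x>0\}$ and $\{x<0\}$ and using the holomorphic continuations of $\Fneg,\Gneg$ (the oscillating part being entire, $\Gneg$ a Laplace transform). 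This rotation is legitimate precisely because $\alpha>1$ forces $|\alpha\theta|\le\alpha\kappa<\tfrac\pi2$, so $e^{-(\sigma e^{i\theta})^\alpha t}=e^{-\sigma^\alpha e^{i\alpha\theta}t}$ keeps super-exponential decay along the rotated ray and dominates the growth on the connecting arcs; on the rotated ray the offending factor collapses to the bounded oscillating $e^{\pm i\sigma x}$, which (together with van der Corput-type control of the leftover $1/\sigma$ near infinity) shows $R(x,\cdot)$ is continuous and bounded uniformly on $(0,\infty)$. On this rotated contour Fubini does apply, the $t$-integral is elementary, and one obtains $\int_0^\infty\frac{\Fneg(\sigma e^{i\theta}x)}{\sigma(\lambda+\sigma^\alpha e^{i\alpha\theta})}\,d\sigma$, which matches the rotated resolvent expression from the previous paragraph up to the explicit constant $\tfrac{1}{\pi\cos\theta}$.

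The main obstacle is exactly the exponential growth of $\Fneg$: it invalidates every direct interchange of integrals and every direct Laplace inversion, and forces all computations onto a rotated contour. The scheme succeeds only because $\alpha>1$, equivalently $\alpha\kappa<\tfrac\pi2$, so the damping $e^{-\sigma^\alpha t}$ survives a rotation by $\theta$; this is the point where the real-exponent argument of~\cite{symzero1} must be upgraded. A persistent secondary nuisance is that $\Fpos,\Fneg$ are only piecewise analytic, not across the origin, which is why one argues separately on the two half-lines and keeps track of $\Fpos(0^\pm)=0$.
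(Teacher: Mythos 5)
Your proposal takes a genuinely different route from the paper: rather than integrating against test functions from $\Ha$, invoking the Hardy-space machinery (Lemma~\ref{lm:hit:sqint}, Lemma~\ref{lm:prob.cauchy}, Corollary~\ref{cor:branges}) to invert the Laplace transform of $\phi_5$, and then stripping off the test function by density (Lemma~\ref{lm:density}), you propose to verify directly, for each fixed $x$, that $\int_0^\infty e^{-\lambda t}R(x,t)\,dt = \frac1\lambda - \frac{u_\lambda(-x)}{\lambda u_\lambda(0)}$, and then appeal to uniqueness of the Laplace transform. If it worked, this would be a real simplification, since it would avoid Nevanlinna-class arguments and density arguments altogether. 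However, as written there are at least two places where the sketch does not go through.

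First, rotating the $s$-contour by a \emph{single} angle $\theta$ does not bound the analytic continuation of $\Fneg(sx)$. Writing the oscillating part as
\[
 e^{sx\sin\theta}\sin(sx\cos\theta+\varphi)
 = \frac{e^{i\varphi}e^{isxe^{-i\theta}} - e^{-i\varphi}e^{-isxe^{i\theta}}}{2i},
\]
and setting $s=\sigma e^{i\theta}$, the first exponential becomes $e^{i\sigma x}$ (bounded), but the second becomes $e^{-i\sigma x e^{2i\theta}}$, whose modulus $e^{\sigma x\sin(2\theta)}$ grows even faster than the original $e^{\sigma x\sin\theta}$. The two exponentials must be rotated in \emph{opposite} directions $\pm\theta$, and each of the resulting pieces then has a non-integrable $1/\sigma$ singularity near $\sigma=0$, which must be cured with a regularizing term (such as the $e^{-sx}\sin\varphi$ subtraction in the paper's $I_2$, $I_3$ decomposition). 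These issues are exactly what the paper's split $I_1+I_2+I_3$ with $I_3=(I_4-I_5)/(2i)$ and the use of Lemma~\ref{lem:aux:int} are designed to handle; your sketch collapses them into a single rotation, which does not work.

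Second, and more seriously, the contour deformation you propose for the resolvent side does not work for the ``incompatible'' sign of $x$. In $u_\lambda(-x)=\frac1{2\pi}\int_\RR\frac{e^{i\xi x}}{\lambda+\psi(\xi)}\,d\xi$, deforming the $\xi>0$ half onto $\arg\xi=\theta$ straightens $\psi$, but when $x\sin\theta<0$ the factor $e^{i\xi x}$ grows like $e^{R|x|\sin\beta}$ along the connecting arc $\xi=Re^{i\beta}$, and the arc contribution does not vanish. Your parenthetical fallback of rotating ``onto the imaginary axis'' does not straighten $\psi$ at all, since $\psi$ has a jump discontinuity across $i\RR$, and in any case the arc still blows up for that sign of $x$. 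There is in fact no single ray that simultaneously makes $\psi$ real-valued and keeps $e^{i\xi x}$ under control for the bad sign of $x$; this is precisely why the paper works through the test function $g\in\Ha$, whose transform $\laplace g(i\xi)$ contributes the decay $O(\min\{1,|\xi|^{-1}\})$ in a full sector $|\arg(\pm i\xi)|\le\tfrac\pi2-\varepsilon$ (Lemma~\ref{lm:7}), and that decay is what makes the deformation in Lemma~\ref{lem:fgp2} legitimate. Until you supply an alternative device for controlling $e^{i\xi x}$ without the test function, the resolvent computation on which your matching rests is not established, and the proposal has a genuine gap.
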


The functions $\Fpos$, $\Fneg$ can be seen as generalised eigenfunctions of transition operators of $X_t$ killed upon hitting $\{ 0\}$. These operators are defined by the formula
\begin{equation}
P_t^{\RR\setminus \{ 0 \}}f(x)=\EE^x(f(X_t); t < \tau_0),
\end{equation}
for $t>0$, $x\in\RR\setminus \{ 0 \}$, and they act on $\leb^p(\RR\setminus \{ 0 \} )$ for arbitrary $p\in [1,\infty]$. Although $P_t^{\RR \setminus \{0\}} \Fneg$ is not well-defined (the expectation does not converge), we have the following spectral-type representation of $P_t^{\RR \setminus \{0\}}$. This is our second main result.

\begin{theorem}
\label{thm:1}
There is a class of functions $\Ha$, dense in $\leb^2(\RR \setminus \{0\})$, with the following property. If $f,g\in\Ha$, then
\[
\begin{split}
& \int_{-\infty}^\infty  P_t^0 f(x)g(x)dx  =  \int_0^\infty \frac{ e^{-s^\alpha t}}{\cos\theta} \left(\int_{-\infty}^\infty \Fpos(sx)f(x)dx \right) \left( \int_{-\infty}^\infty \Fneg(sy)g(y)dy \right) ds\\
&\quad + \int_0^\infty \frac{e^{-s^\alpha t}}{ \cos\theta}\left( \int_{-\infty}^\infty e^{-sx\sin\theta} \sin(sx\cos\theta)f(x)dx\right) \left( \int_{-\infty}^\infty e^{sy\sin\theta}\sin(sy\cos\theta)g(y)dy\right) ds.
\end{split}
\]
\end{theorem}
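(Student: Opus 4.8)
The plan is to deduce the identity from Theorem~\ref{thm:3} together with the strong Markov property at $\tau_0$, working throughout in the time variable $t$: for $t>0$ the factor $e^{-s^\alpha t}$ with $\alpha>1$ dominates the exponential-in-$s$ growth of the pairings against $\Fpos$ and $\Fneg$, whereas passing to the Laplace transform in $t$ would replace it by $(\lambda+s^\alpha)^{-1}$, whose polynomial decay is not enough and leaves the $s$-integral divergent. First I would fix the class $\Ha$: it should consist of smooth functions compactly supported in $\RR\setminus\{0\}$ (or a suitable subclass thereof), so that $\Ha$ is dense in $\leb^2(\RR\setminus\{0\})$, each pairing $\int_{\RR}\Fpos(sx)f(x)\,dx$, $\int_{\RR}e^{\mp sx\sin\theta}\sin(sx\cos\theta)f(x)\,dx$ converges and grows at most exponentially in $s$ (by the Paley--Wiener theorem $\hat f$ is then entire of exponential type, which is exactly what controls these pairings, via $e^{-sx\sin\theta}\sin(sx\cos\theta)=\Im e^{isxe^{i\theta}}$ and the analogous identities and the boundedness of $\Gpos,\Gneg$), and the contour shifts below are valid. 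Granting this, the right-hand side of the statement converges for every $t>0$ and all $f,g\in\Ha$, and it remains to verify the identity for such $f,g$.

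The computation begins with the decomposition coming from the strong Markov property at $\tau_0$ and from $X_{\tau_0}=0$,
\[
P_t f(x)=P_t^0 f(x)+\EE^x\bigl(\ind_{\tau_0\le t}\,P_{t-\tau_0}f(0)\bigr),
\]
so that, writing $\mu_g$ for the finite measure on $(0,t]$ given by $\mu_g(du)=\int_{\RR}g(x)\,\PP^x(\tau_0\in du)\,dx$,
\[
\int_{\RR}P_t^0 f(x)g(x)\,dx=\int_{\RR}P_t f(x)g(x)\,dx-\int_0^t P_{t-u}f(0)\,\mu_g(du).
\]
The free quantities $\int_{\RR}P_t f\,g$ and the functions $r\mapsto P_r f(0)$ are written by Fourier inversion as integrals over $\RR$ of $e^{-t\psi(\xi)}$, respectively $e^{-r\psi(\xi)}$, against the Fourier transforms of $f$ and $g$. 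Since $\psi(\xi)=e^{-i\alpha\theta}\xi^\alpha$ for $\Re\xi>0$ and $\psi(\xi)=e^{i\alpha\theta}|\xi|^\alpha$ for $\Re\xi<0$, the exponent $\psi$ takes the real value $s^\alpha$ on the two rays $\{se^{i\theta}:s>0\}$ and $\{se^{i(\pi-\theta)}:s>0\}$, and $\Re\psi\ge0$ throughout the closed sectors bounded by those rays and the real axis; as $\alpha>1$, $e^{-t\psi}$ decays there super-exponentially and beats the exponential-type growth of $\hat f,\hat g$, so the integration contour may be pushed from $\RR$ onto the two rays. This turns $\int_{\RR}P_t f\,g$ into $\int_0^\infty e^{-s^\alpha t}(\cdots)\,ds$ and $P_r f(0)$ into $\int_0^\infty e^{-s^\alpha r}(\cdots)\,ds$, with densities that are explicit combinations of the boundary values $\hat f(se^{i\theta})$, $\hat f(se^{i(\pi-\theta)})$ and of $\hat g$ on the corresponding rays; by $e^{-sx\sin\theta}\sin(sx\cos\theta)=\Im e^{isxe^{i\theta}}$ and its companions, these are precisely the pairings appearing in the statement, together with their cosine-type counterparts.

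For the correction term I would differentiate Theorem~\ref{thm:3} in $t$ --- permissible for $t>0$, where the factor $e^{-s^\alpha t}$ legitimises differentiation under the integral --- to get $\PP^x(\tau_0\in du)=\frac{1}{\pi\cos\theta}\bigl(\int_0^\infty s^{\alpha-1}e^{-s^\alpha u}\Fneg(sx)\,ds\bigr)du$, hence $\mu_g(du)=\frac{1}{\pi\cos\theta}\bigl(\int_0^\infty r^{\alpha-1}e^{-r^\alpha u}\bigl(\int_{\RR}\Fneg(ry)g(y)\,dy\bigr)dr\bigr)du$. Inserting the ray-representation of $P_{t-u}f(0)$ and this formula for $\mu_g$ and carrying out the elementary $u$-integral
\[
\int_0^t e^{-(t-u)s^\alpha}e^{-u r^\alpha}\,du=\frac{e^{-s^\alpha t}-e^{-r^\alpha t}}{r^\alpha-s^\alpha}
\]
turns the correction term into a double integral over $(s,r)\in(0,\infty)^2$ with the bounded kernel $(e^{-s^\alpha t}-e^{-r^\alpha t})/(r^\alpha-s^\alpha)$; splitting this kernel, relabelling $s\leftrightarrow r$ in one half, and collecting the coefficient of $e^{-s^\alpha t}$ (which forces a principal value in $r$ near $r=s$) recasts the whole right side of the Markov decomposition as a single integral $\int_0^\infty e^{-s^\alpha t}\{\cdots\}\,ds$. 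The conceptual heart is then a pointwise-in-$s$ identity: the brace, built from the boundary Fourier values of $f$ and $g$ and from a principal-value integral of those values against $r^{\alpha-1}/(r^\alpha-s^\alpha)$ and against $\int_{\RR}\Fneg(ry)g(y)\,dy$, must equal $\frac{1}{\cos\theta}$ times the sum of $\bigl(\int_{\RR}\Fpos(sx)f(x)\,dx\bigr)\bigl(\int_{\RR}\Fneg(sy)g(y)\,dy\bigr)$ and the companion product of the two sine kernels. I would prove it by separating the $f$- and $g$-dependence and recognising that the singular integral against $r^{\alpha-1}/(r^\alpha-s^\alpha)$ is precisely the operation that produces $\Gpos$ (and $\Gneg$) out of the free cosine-type wave --- that is, it is the representation~\eqref{eq:gdef} in disguise, recovered from that Laplace integral by residue calculus --- so that the ``cross'' terms, which pair $\Fpos$-type data against sine-type data, cancel, leaving exactly the two stated terms.

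The main obstacle is not this algebra but the analytic bookkeeping that must accompany it. As the introduction warns, the exponential growth of $\Fpos$ and $\Fneg$ prevents any of the intermediate double integrals from being absolutely convergent, so every use of Fubini's theorem and every contour shift has to be justified by hand --- by ordering the integrations so that a damping factor ($e^{-(t-u)s^\alpha}$ or $e^{-r^\alpha u}$) is always present, by exploiting the oscillation in $\Fneg$ so that the hitting-time density and the boundary integrals converge conditionally, and by controlling the principal-value integral so that the splitting of $(e^{-s^\alpha t}-e^{-r^\alpha t})/(r^\alpha-s^\alpha)$ is legitimate. Once these interchanges are secured and the representation~\eqref{eq:gdef} is matched against the singular integral produced by the $u$-convolution, the remaining verifications are routine.
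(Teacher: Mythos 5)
Your proposal takes a genuinely different route from the paper. Where you work directly in the time variable $t$, decompose $P_t^0$ via the strong Markov property at $\tau_0$, use Fourier contour deformation to evaluate the free part and $P_{t-u}f(0)$, plug in the hitting-time density from Theorem~\ref{thm:3}, and carry out the $u$-convolution, the paper instead stays entirely in the Laplace variable $\lambda$: it uses the identity $u^0_\lambda(x,y)=u_\lambda(y-x)-u_\lambda(-x)u_\lambda(y)/u_\lambda(0)$ (Lemma~\ref{lem:fgp}), deforms the Fourier contours to the rays $e^{\pm i\theta}(0,\infty)$ (Lemma~\ref{lem:fgp2}) to arrive at a function $\phi_4(\lambda)$ which is shown to be in the Nevanlinna class $\mathscr{N}^+$ (Lemma~\ref{lm:bounded.type}), square-integrable after Sokhozki boundary values are taken (Lemmas~\ref{lem:imaginaryPart}--\ref{lem:phi4}), and then inverts via Cauchy's integral formula for $\mathscr{H}^p$ (Corollary~\ref{cor:branges}, Theorem~\ref{thm:imagine}). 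The Hardy-space inversion is precisely the step you are trying to bypass by doing the $u$-integral $\int_0^t e^{-(t-u)s^\alpha}e^{-ur^\alpha}\,du=(e^{-s^\alpha t}-e^{-r^\alpha t})/(r^\alpha-s^\alpha)$ by hand.

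Three concrete gaps stand in the way of your route. First, you invoke Theorem~\ref{thm:3} to produce the hitting-time density, but in the paper Theorem~\ref{thm:3} is proved after Theorem~\ref{thm:1} and uses the machinery of Section~4 (notably Lemma~\ref{lm:oddeformuj2} and equation~\eqref{eq:FnegFtau}), so you would need an independent proof of Theorem~\ref{thm:3} to avoid circularity, and that proof would almost certainly reconstruct the very lemmas you are trying to sidestep. Second, and most seriously, the claim that "the cross terms cancel, leaving exactly the two stated terms" is where the mathematics actually happens and it is not substantiated. After you split the $u$-kernel, relabel $s\leftrightarrow r$ in one half and extract the coefficient of $e^{-s^\alpha t}$, the correction term has the schematic form $A(s)\,\mathrm{p.v.}\!\int B(r)\,dr/(r^\alpha-s^\alpha)+B(s)\,\mathrm{p.v.}\!\int A(r)\,dr/(r^\alpha-s^\alpha)$ with $A$ depending only on $f$ and $B$ only on $g$; matching this, together with the free part, to the sum of the two \emph{products} in the statement requires precisely the nontrivial complex-arithmetic identity of Lemma~\ref{lm:complex}, $\Im(ab/c)=\Im a\,\Im b/\Im c+\Im(a/c)\Im(b/c)/\Im(1/c)$, applied with $a=K_1-iL_1$, $b=K_2-iL_2$, $c=K_0-iL_0$, plus the identification of the p.v.\ transforms with $\Gpos$, $\Gneg$ pairings (Lemma~\ref{lm:oddeformuj}). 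Your "recognising that the singular integral produces $\Gpos$" gestures at the second piece but omits the first entirely. Third, your choice of $\Ha$ as compactly supported smooth functions is not the paper's class: for such $f$ the entire function $\laplace f$ has exponential growth along non-real rays (Paley--Wiener), whereas the paper's $\Ha$ consists of functions with super-exponential decay (e.g.\ $e^{-s|x|\log|x+e|}$), precisely so that $|\laplace f(\xi)|\le C\min\{1,|\xi|^{-1}\}$ uniformly in sectors (Lemma~\ref{lm:7}); that bound is used throughout (Lemmas~\ref{lm:square:Kfg0}, \ref{lm:square:Kfg}, \ref{lem:phi4}) and also in the contour-reordering step at the end of the proof of Theorem~\ref{thm:3}. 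While your $e^{-s^\alpha t}$ damping may rescue the time-domain contour shifts, it does not rescue these auxiliary estimates, so the choice of test class is not a free parameter.
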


The class $\Ha$ is discussed in detail in Section~\ref{sec:test}. Theorem~\ref{thm:1} provides a spectral-type representation of $P_t^0$: the parenthesised integrals can be thought of as Fourier-type transforms of $f$ and $g$, which diagonalise the action of $P_t^0$. This is the reason we call $\Fpos$ and $\Fneg$ \emph{generalised eigenfunctions} of $P_t^0$. We stress, however, that due to exponential growth at infinity, $P_t^0 \Fpos$ is not defined unless the process $X_t$ is symmetric, that is, $\theta = 0$.

We mention here one property of the functions $\Gpos$ and $\Gneg$. For further information, see Section~\ref{sec:g}.

\begin{proposition} 
\label{pr1}
The functions $\Gpos$ and $\Gneg$ are bounded, integrable, and their Fourier transform is given by
\[
 \fourier \Gpos(\xi) = \fourier \Gneg(-\xi) = \sin \tfrac{\pi}{\alpha} \biggl(\frac{\alpha}{\psi(\xi)-1} - \frac{1}{e^{-i \theta} \xi - 1} + \frac{1}{e^{i \theta} \xi + 1} \biggr), \qquad \xi \in \RR;
\]
Furthermore, the functions $\Gpos(x) = \Gneg(-x)$ and $\Gpos(-x) = \Gneg(x)$ are completely monotone on $(0, \infty)$.
\end{proposition}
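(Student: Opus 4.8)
The plan is to establish the three claims in order of increasing difficulty. Write $\beta_{+} = \alpha(\tfrac{\pi}{2} - \theta)$, $\beta_{-} = \alpha(\tfrac{\pi}{2} + \theta)$, and
\[
 g_{\pm}(t) = \frac{\alpha \sin\tfrac{\pi}{\alpha}}{\pi}\cdot\frac{t^{\alpha}\sin\beta_{\pm}}{t^{2\alpha} - 2 t^{\alpha}\cos\beta_{\pm} + 1} = \frac{\alpha \sin\tfrac{\pi}{\alpha}}{\pi}\cdot\frac{\sin\beta_{\pm}\, t^{\alpha}}{\lvert t^{\alpha} - e^{i\beta_{\pm}}\rvert^{2}},\qquad t>0,
\]
so that $\Gpos(x) = \int_{0}^{\infty} g_{+}(t)\, e^{-tx}\,dt$ and $\Gneg(x) = \Gpos(-x) = \int_{0}^{\infty} g_{-}(t)\, e^{-tx}\,dt$ for $x>0$, by \eqref{eq:gdef}. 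Since $\rho\in[1-\tfrac{1}{\alpha},\tfrac{1}{\alpha}]$ we have $\lvert\theta\rvert\le\tfrac{\pi}{\alpha}-\tfrac{\pi}{2}$, hence $\beta_{\pm}\in[(\alpha-1)\pi,\pi]\subseteq(0,\pi]$, and therefore $\sin\beta_{\pm}\ge 0$. As also $\sin\tfrac{\pi}{\alpha}>0$ and $t^{2\alpha} - 2 t^{\alpha}\cos\beta_{\pm} + 1 = (t^{\alpha}-\cos\beta_{\pm})^{2} + \sin^{2}\beta_{\pm}>0$, we get $g_{\pm}\ge 0$. Complete monotonicity of $\Gpos$ on $(0,\infty)$ and of $x\mapsto\Gpos(-x)=\Gneg(x)$ on $(0,\infty)$ is then immediate, since $(-1)^{n}\Gpos^{(n)}(x) = \int_{0}^{\infty} t^{n} e^{-tx} g_{+}(t)\,dt\ge 0$ and likewise for $\Gneg$; this proves the last assertion.

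Next, $g_{\pm}(t)\le C\min(t^{\alpha},t^{-\alpha})$ for a constant $C$: near $0$ the denominator exceeds $\tfrac14$ and near $\infty$ it exceeds $\tfrac14 t^{2\alpha}$, while $g_{\pm}$ is continuous on the remaining compact interval. As $\alpha\in(1,2)$ this majorant lies in $L^{1}(0,\infty)$, so $g_{\pm}\in L^{1}(0,\infty)$; consequently $0\le\Gpos(x)\le\max(\lVert g_{+}\rVert_{L^{1}},\lVert g_{-}\rVert_{L^{1}})$, which gives boundedness of $\Gpos$ and $\Gneg$, and Tonelli's theorem yields
\[
 \int_{-\infty}^{\infty}\lvert\Gpos(x)\rvert\,dx = \int_{0}^{\infty}\frac{g_{+}(t)}{t}\,dt + \int_{0}^{\infty}\frac{g_{-}(t)}{t}\,dt < \infty,
\]
since $g_{\pm}(t)/t\le C\min(t^{\alpha-1},t^{-\alpha-1})$ is integrable ($\alpha-1>-1$, $-\alpha-1<-1$). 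Hence $\Gpos,\Gneg\in L^{1}(\RR)\cap L^{\infty}(\RR)$.

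For the Fourier transform, the double integrals just displayed are absolutely convergent, so Fubini's theorem gives, for all $\xi$,
\[
 \fourier\Gpos(\xi) = \int_{0}^{\infty} g_{+}(t)\Bigl(\int_{0}^{\infty} e^{-(t+i\xi)x}\,dx\Bigr)dt + \int_{0}^{\infty} g_{-}(t)\Bigl(\int_{-\infty}^{0} e^{(t-i\xi)x}\,dx\Bigr)dt = \int_{0}^{\infty}\frac{g_{+}(t)}{t+i\xi}\,dt + \int_{0}^{\infty}\frac{g_{-}(t)}{t-i\xi}\,dt.
\]
Using the partial-fraction identity $\sin\beta\cdot\tfrac{t^{\alpha}}{(t^{\alpha}-e^{i\beta})(t^{\alpha}-e^{-i\beta})} = \tfrac{1}{2i}\bigl(\tfrac{e^{i\beta}}{t^{\alpha}-e^{i\beta}} - \tfrac{e^{-i\beta}}{t^{\alpha}-e^{-i\beta}}\bigr)$, this rewrites $\fourier\Gpos(\xi)$ as a linear combination of the four integrals $\Phi(e^{\pm i\beta_{+}},i\xi)$, $\Phi(e^{\pm i\beta_{-}},-i\xi)$, where $\Phi(a,b)=\int_{0}^{\infty}\tfrac{dt}{(t^{\alpha}-a)(t+b)}$. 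Each $\Phi(a,b)$ I would evaluate by contour integration: the map $z\mapsto\tfrac{1}{(z^{\alpha}-a)(z+b)}$, with $z^{\alpha}$ the principal branch (cut along $(-\infty,0]$), is $O(\lvert z\rvert^{-1-\alpha})$ at infinity and bounded near $0$, with simple poles at $z=a^{1/\alpha}$ and $z=-b$ only; integrating it around the boundary of a suitable circular sector and sending the radii to $0$ and $\infty$ rotates the ray of integration onto the imaginary axis, turning $\Phi(a,b)$ into an elementary combination of integrals of the type $\int_{0}^{\infty}\tfrac{ds}{(e^{\pm i\alpha\pi/2}s^{\alpha}-a)(s\mp ib)}$ plus $2\pi i$ times the residues at whichever of $a^{1/\alpha}$, $-b$ lie in the sector. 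Recombining the four contributions, and using the relations $\psi(is)=e^{i\beta_{+}}s^{\alpha}$ and $\psi(-is)=e^{-i\beta_{-}}s^{\alpha}$ (valid for the analytic continuation $\psi(\xi)=e^{-i\alpha\theta}\xi^{\alpha}$ of $\psi$ off the positive half-line) to undo the rotation, the ray integrals collapse into $\sin\tfrac{\pi}{\alpha}\cdot\tfrac{\alpha}{\psi(\xi)-1}$ and the residue contributions assemble into $\sin\tfrac{\pi}{\alpha}\bigl(-\tfrac{1}{e^{-i\theta}\xi-1}+\tfrac{1}{e^{i\theta}\xi+1}\bigr)$. The formula for $\fourier\Gneg$ follows from $\Gneg(x)=\Gpos(-x)$, equivalently by substituting $-\theta$ for $\theta$ (note $\psi(-\xi)=e^{i\alpha\theta}\lvert\xi\rvert^{\alpha}=\psi(\xi)|_{\theta\mapsto-\theta}$).

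The hard part is this last computation. The poles $a^{1/\alpha}\in\{e^{\pm i(\pi/2-\theta)},\,e^{\pm i(\pi/2+\theta)}\}$ and the pole $-b\in\{\mp i\xi\}$ cross the rotated contour for some ranges of $\sign\theta$ and $\sign\xi$ but not for others, so the residue bookkeeping needs a careful case analysis, and one must check that in each case the four terms still reassemble into the single three-term formula. That this occurs is forced by cancellations such as $\Res_{\xi=e^{i\theta}}\tfrac{\alpha}{\psi(\xi)-1}=e^{i\theta}=\Res_{\xi=e^{i\theta}}\tfrac{1}{e^{-i\theta}\xi-1}$, which remove the apparent singularity of the first term; such identities are exactly what one should expect, since $\Gpos\in L^{1}(\RR)$ forces $\fourier\Gpos$ to be continuous (indeed $C_{0}$) on $\RR$, in spite of the branch point of $\psi$ at the origin. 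The symmetric case $\theta=0$, where $\Gpos=\Gneg$ and the two correction terms combine, is a convenient sanity check against sign errors throughout.
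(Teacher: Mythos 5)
Your proof handles the secondary claims (boundedness, integrability, complete monotonicity) cleanly and completely, by working directly from the kernel $g_{\pm}$ in \eqref{eq:gdef}: these facts are all immediate once one observes $g_{\pm}\geqslant 0$ and $g_{\pm}(t)\leqslant C\min(t^{\alpha},t^{-\alpha})$. The paper does not spell these out, treating them as evident from \eqref{eq:gdef}, so your explicit verification is a useful complement.

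For the Fourier transform formula you take a genuinely different route from the paper, and it is in fact precisely the route the paper explicitly declines. The paper says ``One way to prove the above identity is to simply evaluate the left-hand side using the definition \eqref{eq:gdef} of $\Gpos$. We take a different approach,'' and instead starts from the right-hand side of \eqref{eq:fgpos}: it shows $\Phi(i\xi)$ extends to an $\mathscr{H}^2$ function in the upper half-plane, invokes Theorem~\ref{thm:branges} (Cauchy's integral formula for $\mathscr{H}^p$), and then reads off $\im\Phi$ on the real line via a Sokhozki-type computation. That single pass recovers the two exponential kernels in \eqref{eq:gdef} on $x>0$ and $x<0$ simultaneously, with no residue bookkeeping. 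Your plan runs in the opposite direction: Fubini plus a partial-fraction decomposition reduce $\fourier\Gpos$ to four integrals $\Phi(a,b)=\int_0^\infty dt/[(t^\alpha-a)(t+b)]$, which you then propose to evaluate by rotating the ray $(0,\infty)$ onto the imaginary axis. The strategy is sound, but the execution is only sketched, and the part you flag as ``the hard part'' --- tracking which of the poles $a^{1/\alpha}\in\{e^{\pm i\beta_{\pm}/\alpha}\}$ and $-b\in\{\mp i\xi\}$ lie in the swept sector, splitting by the signs of $\theta$ and $\xi$, and verifying the advertised cancellations such as the one at $\xi=e^{i\theta}$ --- is exactly the work the paper's Hardy-space argument is designed to avoid. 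As written the proposal is therefore an incomplete proof: the residue case analysis is essential and not carried out. A secondary point worth noting: when $\theta=0$ (and more generally when $\beta_{\pm}=\alpha\pi/2$), the pole $a^{1/\alpha}=e^{\pm i\pi/2}$ lands on the imaginary axis, so your rotated contour passes through it; the limiting integral must then be interpreted as a principal value and the residue contribution halved, which is an additional case your sketch does not mention.
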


\subsection{Structure of the article}

The remainning part of the paper is divided into four sections. In Preliminaries we recall basic definitions and state auxiliary lemmas. In particular, we introduce a suitable family of test functions $\Ha$, and we discuss Nevanlinna class of functions and Cauchy's integral formula. In Section 3 we prove a handful of technical lemmas in order to derive a formula for the generalised eigenfunctions $\Fpos$, $\Fneg$. The properties of functions $\Gpos$ and $\Gneg$ are studied here, and the proof of Proposition~\ref{pr1} is given. Section 4 is dedicated to the proof of the Theorem~\ref{thm:1} and, finally, Theorem~\ref{thm:3} is proved in Section 5.

In our proof, we deform the contour of integration a number of times. Here is a rough sketch of the argument.
\begin{itemize}
\item We begin with the triple integral $I(\lambda)$ of $e^{-\lambda t} p^0_t(x, y) f(x) g(y)$.
\item We use Plancherel's theorem to rewrite $I(\lambda)$ as a triple integral with respect to $t$, $\xi$, $\eta$, where $\xi$ and $\eta$ are the Fourier variables corresponding to $x$ and $y$ (Lemma~\ref{lem:fgp}).
\item Next, we deform the contour of integration in $\xi$ and $\eta$ to $(-e^{-i \theta} \infty, 0) \cup (0, e^{i \theta} \infty)$.
\item By doing so, we obtain an expression for $I(\lambda)$, which is a Cauchy--Stieltjes transform $\phi_4(\lambda)$ of some function of a new variable $r$ (Lemma~\ref{lem:fgp2}).
\item Since the Cauchy--Stieltjes transform ($r \mapsto \lambda$) is the Laplace transform ($t \mapsto \lambda$) of the Laplace transform ($r \mapsto t$), the above leads to an expression for the double integral $J(t)$ of $p^0_t(x, y) f(x) g(y)$ (with respect to $x$ and $y$) as a Laplace transform of what will be denoted by $-\im \phi_4(-r)$ (Theorem~\ref{thm:imagine}).
\item In order to prove Theorem~\ref{thm:1}, we now identify the expression for $-\im \phi_4(-r)$ (which is given in terms of integrals of Laplace transforms of $f$ and $g$) with an appropriate integral transform of $f$ and $g$. This involves deforming back the contour of integration with respect to $\xi$ and $\eta$ to $\RR$, and an application of Plancherel's formula. It is here convenient to replace $r$ with $s^\alpha$.
\item Theorem~\ref{thm:3} is proved in a similar way, with an additional step at the end of the proof: we change the order of the integrals with respect to $s$ and $x$, and by a density argument, we are able to remove the integral with respect to $x$. Changing the order of integration, however, is not straightforward: it requires an appropriated deformation of the contour of integration, so that Fubini's theorem can be applied.
\end{itemize}

\section{Preliminaries}

We denote by $\leb^p(\RR)$ the space of real-valued functions $f$ on $\RR$ such that $|f(x)|^p$ is integrable. We use $\laplace f$ to denote the two-sided Laplace transform of $f$:
\[ \laplace f(\xi) = \int_{-\infty}^\infty f(x) e^{-\xi x} dx \]
whenever the integral converges absolutely. If $f \in \leb^1(\RR)$, then $\fourier f(\xi) = \laplace f(i \xi)$ (with $\xi \in \RR)$ is the Fourier transform of $f$. The Fourier transformation $\fourier$ is extended continuously to $\leb^2(\RR)$.

\subsection{Stable L\'evy processes}
By $X_t$ we denote a one-dimensional $\alpha$-stable Lévy process with index of stability $\alpha \in (1, 2)$. We assume that $\alpha > 1$ in order that $X_t$ is point-recurrent (i.e.\@ it hits single points with positive probability). The case $\alpha = 2$ is well-studied and much simpler, so we require that $\alpha \ne 2$.

A one-dimensional stable Lévy process is completely characterised by $\alpha$, the positivity parameter $\rho = \PP^0(X_1 > 0)$, and the scale parameter $k > 0$. For $\alpha \in (1, 2)$, we have $\rho \in [1 - \tfrac{1}{\alpha}, \tfrac{1}{\alpha}]$. We denote by $\psi$ the characteristic exponent of $X_t$:
\begin{equation}
\label{eq:lk}
\EE^0 e^{i\xi X_t}=e^{-t\psi(\xi)}, \ t>0,\xi\in\RR.
\end{equation}
In our case
\[
\psi(\xi) = (k|\xi|)^\alpha\left(1-i \tan\left( (2\rho-1)\frac{\alpha\pi}{2}\right) \sign \xi\right) , \qquad \xi \in \RR.
\]
Our results do not depend on the scale parameter $k$ in any essential way. For this reason, we choose $k$ in such a way that $|\psi(\xi)| = |\xi|^\alpha$ for every $\xi \in \RR$. Thus, if we set
\[
\theta = \frac{(1 - 2 \rho) \pi}{2} \, ,
\]
then we have $|\theta| \le \tfrac{\pi}{\alpha} - \tfrac{\pi}{2}$ and
\[
\psi(\xi) = \begin{cases} (e^{-i \thet} \xi)^\alpha & \text{if $\xi > 0$,} \\ (e^{i \thet} (-\xi))^\alpha & \text{if $\xi < 0$} \end{cases}
\]
(all complex powers are principal branches). Note that if we replace conditions $\xi > 0$, $\xi < 0$ by $\re \xi > 0$, $\re \xi < 0$, respectively, then the above expression defines a holomorphic extension of $\psi$ to $\C \setminus i \RR$. Throughout the text, the symbol $\psi$ denotes this extension, and the fact that $\psi(r e^{i \thet}) \in (0, \infty)$ for $r \in (0, \infty)$ will play an important role.

\begin{remark}
Apparently, the results of the present article can be extended to some Lévy processes with completely monotone jumps, introduced in~\cite{rogers2} and studied recently in~\cite{rogers3}. For \emph{symmetric} Lévy processes this was already done in~\cite{symzero1}. In the non-symmetric case, one clearly has to assume that $1 / (1 + \psi(\xi))$ is absolutely integrable, so that $X_t$ is point-recurrent, and points are regular for $X_t$ (see~\cite{sato}, Theorem~43.3). However, a number of further technical conditions will have to be imposed.
\end{remark}

Probability and expectation of the process starting from $x\in\RR$ are denoted by $\PP^x$ and $\EE^x$. We define the transition operators $P_t$ of $X_t$ by
\[
P_tf(x)=\EE^xf(X_t)=\int_\RR f(y)\PP^x(X_t\in dy), \qquad t>0, x\in\RR.
\]
The operators $P_t$ are convolution operators, and the corresponding convolution kernels $p_t(x)$ are known as transition densities. The operators $P_t$ form a strongly continuous semigroup of operators on $\leb^2(\RR)$, and their action is diagonalised by the Fourier transformation.

Let $D\subseteq \RR$ be an open set and let
\[
\tau_D=\inf\{t\geqslant 0: X_t\notin D\}
\]
be the first exit time of the process $X_t$ from the set $D$. The process $X_t$ killed upon leaving $D$ is formally defined to be equal to $X_t$ until its life-time $\tau_D$. We are more interested in the corresponding transition operators $P_t^D$, given by
\[
P_t^Df(x)= \EE^x (f(X_t)\chi_{t<\tau_D})=\int_Df(y)\PP^x(X_t\in dy; t<\tau_D), \qquad t>0,x\in D.
\]
The corresponding kernel function $p^D_t(x, y)$, the transition density of the killed process, is given by Hunt's switching formula
\[
p^D_t(x, y) = p_t(y - x) - \EE^x(p_{t - \tau_D}(y - X_{\tau_D}) \chi_{t < \tau_D}) , \qquad t > 0, x, y \in D.
\]
In particular, $0 \leqslant p^D_t(x, y) \leqslant p_t(y - x)$.We consider $D=\RR\setminus \{ 0 \}$, and for simplicity we denote $\tau_0 = \tau_{\RR \setminus \{0\}}$, $P_t^0 = P_t^{\RR\setminus \{ 0 \}}$ and $p^0_t(x,y) = p^{\RR \setminus \{0\}}_t(x,y)$.

Recall that the $\lambda$-potential kernel of the process $X_t$ is defined as
\[
u_\lambda(x)=\int_0^\infty e^{-\lambda t}p_t(x)dt, \qquad \lambda > 0, x\in\RR.
\]
By~\eqref{eq:lk} and Fubini's theorem, we have
\[
\fourier p_t(\xi) = e^{-t \psi(-\xi)} , \qquad \fourier u_\lambda(\xi) = \frac{1}{\psi(-\xi) + \lambda} \, , \qquad \xi \in \RR.
\]
In a similar way, the $\lambda$-potential kernel of the killed process is given by
\[
u^0_\lambda(x,y)=\int_0^\infty e^{-\lambda t}p^0_t(x,y)dt, \qquad \lambda > 0, x,y\in\RR.
\]
By Proposition 41.3 in~\cite{sato}, we have
\begin{equation}\label{eq:uzero}
u_\lambda^0(x,y)=u_\lambda(y-x)-\frac{u_\lambda(-x)u_\lambda(y)}{u_\lambda(0)}.
\end{equation}
Furthermore,
\[
\int_{-\infty}^\infty u_\lambda^0(x,y)dy=\int_0^\infty e^{-\lambda t} \PP^x(\tau_0>t)dt=\frac{1}{\lambda} \, (1 - \EE^x e^{-\lambda \tau_0}) ,
\]
and, on the other hand,
\[
\int_{-\infty}^\infty u_\lambda^0(x,y)dy=\int_{-\infty}^\infty u_\lambda(y-x)dy-\frac{u_\lambda(-x)}{u_\lambda(0)}\int_{-\infty}^\infty u_\lambda(y)dy=\frac{1}{\lambda}\left( 1-\frac{u_\lambda(-x)}{u_\lambda(0)}\right).
\]
It follows that
\begin{equation}
\label{expected}
\EE^x e^{-\lambda \tau_0}= \frac{u_\lambda(-x)}{u_\lambda(0)}.
\end{equation}

Our ultimate goal is to find integral expressions for the distribution of $\tau_0$ with respect to $\PP^x$. Our starting points are~\eqref{eq:uzero} and~\eqref{expected}, which describe the Laplace transforms (with respect to $t$) of $p^0_t(x, y)$ and $\PP^x(\tau_0 \in dt)$ in terms of $u_\lambda$. The function $u_\lambda$ is in turn the inverse Fourier transform of $1 / (\lambda + \psi(-\xi))$. These expressions, however, are not suitable for standard inversion formulas. For this reason, we will first multiply the above expressions by appropriately regular test functions, and only then consider Fourier transforms in $x$ and $y$.

\subsection{Test functions}
\label{sec:test}

Just as in the symmetric case studied in~\cite{symzero1}, our representation of $p^0_t(x,y)$ and $\PP^x(\tau_0 > t)$ involves \emph{generalised eigenfunctions} $\Fpos$ and $\Fneg$. In the symmetric case, $\Fpos = \Fneg$ is a bounded function; here $\Fpos$ and $\Fneg$ have exponential growth at infinity. This nuisance makes the use of the Laplace transform problematic. To overcome this difficulty, we introduce a particular class of test functions, following~\cite{specHL} (where stable processes in a half-line were studied).

\begin{definition} 
By $\Ha_+$ denote the class of all functions $g: \RR \to \RR$ such that
\begin{enumerate}
\item $g(x) = 0$ for $x < 0$;
\item $g$ extends to an analytic function in the sector $|\arg(z)|<\pi/2$;
\item for every $\varepsilon\in(0,\pi/2)$ there exists $\delta=\delta(\varepsilon)>0$ such that $|g(z)|=O(|z|^{-\delta |z|})$ when $|z|\rightarrow\infty$, and $|g(z)|=O(1)$ when $|z|\rightarrow 0$, uniformly in the sector $|\arg(z)|<\pi/2-\varepsilon$.
\end{enumerate}
We say that $g \in \Ha_-$ if $g(-z)$ belongs to $\Ha_-$. Finally, $g\in \Ha$ if $g : \RR \to \RR$, and $g = g_+ + g_-$ for some $g_+ \in \Ha_+$ and $g_- \in \Ha_-$.
\end{definition}

We note that the classes $\Ha_+$, $\Ha_-$, $\Ha$ are non-empty and non-trivial, e.g.\@ $g(z)=e^{-s|z|\log|z + e|}$ is in $\Ha$ for every $s > 0$. It was observed in~\cite{specHL} that, on one hand, $\Ha_+$ is sufficiently rich, while on the other one, the Laplace--Fourier transform of a function $g\in\Ha_+$ is a suitably decaying analytical function in the sector $|\Arg(z)|<\pi-\varepsilon$ for every $\varepsilon > 0$. We rephrase these results for the class $\Ha$.

\begin{lemma}[\cite{specHL}, p.~19, Lemma~2.14]
\label{lm:7}
Let $g\in\Ha$. Then $\laplace g(z)$ is an entire function and for every $\varepsilon \in (0, \frac{\pi}{2})$ there exists a constant $C$ such that
\[
|\laplace g(z)|\leqslant C\min\{1,|z|^{-1}\}, \qquad \lvert\arg(iz)\rvert \leqslant \tfrac{\pi}{2}-\varepsilon \text{ or } \lvert\arg(-iz)\rvert \leqslant \tfrac{\pi}{2}-\varepsilon.
\]
\end{lemma}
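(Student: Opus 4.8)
The plan is to reduce the statement to the half-line class $\Ha_+$ and then estimate $\laplace g$ by deforming the contour of integration. Write $g = g_+ + g_-$ with $g_\pm \in \Ha_\pm$; then $\laplace g = \laplace g_+ + \laplace g_-$, and since $h(y) \mdef g_-(-y)$ lies in $\Ha_+$ with $\laplace g_-(z) = \laplace h(-z)$, and since the set $\{|\arg(iz)| \le \tfrac{\pi}{2} - \varepsilon\} \cup \{|\arg(-iz)| \le \tfrac{\pi}{2} - \varepsilon\}$ is invariant under $z \mapsto -z$, it suffices to prove the statement for $g \in \Ha_+$. For such $g$ we have $\laplace g(z) = \int_0^\infty g(x) e^{-zx}\,dx$, and by condition~(3) in the definition of $\Ha_+$ the integrand is, on every bounded set of $z$, dominated by $|g(x)| e^{Mx} = O(x^{-\delta x} e^{Mx})$, which is integrable on $(0,\infty)$; hence $\laplace g$ is entire, by differentiation under the integral sign (or by Morera's theorem together with Fubini).

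For the estimate, fix $\varepsilon \in (0,\tfrac{\pi}{2})$, let $z$ lie in the region and set $\phi = \arg z$, so that $\phi \in [\varepsilon, \pi - \varepsilon] \cup [-\pi + \varepsilon, -\varepsilon]$. Using that $g$ is analytic in the sector $|\arg w| < \tfrac{\pi}{2}$ and that $|g(w)| = O(|w|^{-\delta|w|})$ along each ray in that sector, a Cauchy's theorem argument over a circular sector of radius $R$ — where the arc contribution is bounded by $\tfrac{\pi}{2} R \cdot O(R^{-\delta R})\, e^{|z| R} \to 0$ as $R \to \infty$ — allows one to rotate the contour: for every $\beta$ with $|\beta| < \tfrac{\pi}{2}$,
\[ \laplace g(z) = e^{i\beta}\int_0^\infty g(e^{i\beta} r)\, e^{-z e^{i\beta} r}\,dr . \]
A short case analysis shows that for $z$ in the region one may pick $\beta = \beta(\phi, \varepsilon)$ with $|\beta| \le \tfrac{\pi}{2} - \tfrac{\varepsilon}{2}$ and $\cos(\phi + \beta) \ge \sin\tfrac{\varepsilon}{2}$ (take $\beta = -\phi$ when $|\phi| \le \tfrac{\pi}{2} - \tfrac{\varepsilon}{2}$, and $\beta = \mp(\tfrac{\pi}{2} - \tfrac{\varepsilon}{2})$ according to the sign of $\phi$ otherwise). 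Since $|\beta|$ stays bounded away from $\tfrac{\pi}{2}$ by $\tfrac{\varepsilon}{2}$, condition~(3), applied with $\varepsilon/2$ in place of $\varepsilon$ (and using that $r^{-\delta r} \le 1$ for $r \ge 1$), gives a constant $C = C(\varepsilon)$ with $|g(e^{i\beta} r)| \le C$ for all $r > 0$ and all admissible $\beta$, and also $\int_0^\infty |g(e^{i\beta} r)|\,dr \le C$. Because $\re(z e^{i\beta} r) = |z|\cos(\phi + \beta)\, r \ge |z| r \sin\tfrac{\varepsilon}{2}$, the rotated formula yields both $|\laplace g(z)| \le C \int_0^\infty e^{-|z| r \sin(\varepsilon/2)}\,dr = C/(|z|\sin\tfrac{\varepsilon}{2})$ and $|\laplace g(z)| \le \int_0^\infty |g(e^{i\beta} r)|\,dr \le C$; combining the two bounds gives $|\laplace g(z)| \le C'\min\{1, |z|^{-1}\}$ with $C'$ depending only on $\varepsilon$.

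The main obstacle — essentially the only genuine point — is uniformity: one must arrange that the rotation angle $\beta$ can always be chosen in a fixed closed subinterval of $(-\tfrac{\pi}{2}, \tfrac{\pi}{2})$ depending only on $\varepsilon$, so that the constant $C(\varepsilon)$ and the exponent $\delta$ supplied by condition~(3) are the same for every $z$ in the region and every contour used; this is exactly what the case analysis above secures. A secondary point requiring a little care is the vanishing of the arc term in Cauchy's theorem: there $e^{-\re(z w)}$ can be as large as $e^{|z| R}$, but it is overwhelmed by the superexponential decay $|g(w)| = O(|w|^{-\delta|w|})$ valid along rays inside the sector of analyticity.
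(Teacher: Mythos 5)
The paper does not prove this lemma itself; it cites it from \cite{specHL}, Lemma~2.14, and your argument—reducing to $\Ha_+$ via the reflection $z \mapsto -z$ (using that the region is invariant under $z\mapsto -z$ and that $\laplace g_-(z)=\laplace h(-z)$ with $h\in\Ha_+$), then rotating the contour to a ray $e^{i\beta}(0,\infty)$ on which $\re(ze^{i\beta}r)\ge |z|r\sin\tfrac{\varepsilon}{2}$ while keeping $|\beta|$ in a fixed compact subinterval of $(-\tfrac{\pi}{2},\tfrac{\pi}{2})$—is correct and is the natural, and almost certainly the cited, proof. One small point worth making explicit: condition~(3) of the definition of $\Ha_+$ only bounds $g$ asymptotically as $|z|\to 0$ and as $|z|\to\infty$, so the uniform bound $|g(e^{i\beta}r)|\le C(\varepsilon)$ for all $r>0$ (and the finiteness of $\int_0^\infty|g(e^{i\beta}r)|\,dr$) additionally uses continuity of $g$ on the compact annular sector $\{r_0\le|z|\le R_0,\ |\arg z|\le\tfrac{\pi}{2}-\tfrac{\varepsilon}{2}\}$, which is contained in the open sector of analyticity; likewise, in the contour-deformation step one should also dispose of the small arc near the origin (trivially, by the $O(1)$ bound).
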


A similar argument leads to the following estimate of the derivative of $\laplace g$; we omit the proof.

\begin{lemma}
\label{lm:7p}
Let $g\in\Ha$. Then for every $\varepsilon \in (0, \frac{\pi}{2})$ there exists a constant $C$ such that
\[
|(\laplace g)'(z)|\leqslant C\min\{1,|z|^{-2}\}, \qquad \lvert\arg(iz)\rvert \leqslant \tfrac{\pi}{2}-\varepsilon \text{ or } \lvert\arg(-iz)\rvert \leqslant \tfrac{\pi}{2}-\varepsilon.
\]
\end{lemma}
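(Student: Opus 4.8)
The plan is to reduce to the subclass $\Ha_+$, to write $(\laplace g)'$ as a two-sided Laplace transform, and then to gain one extra negative power of $|z|$ over Lemma~\ref{lm:7} by a single integration by parts, exploiting that $x g(x)$ vanishes at the origin.

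First I would reduce to $g \in \Ha_+$. Writing $g = g_+ + g_-$ with $g_\pm \in \Ha_\pm$ and using $\laplace\bigl(g(-\,\cdot\,)\bigr)(z) = \laplace g(-z)$, the case $g \in \Ha_-$ follows from the case $g \in \Ha_+$ after the substitution $z \mapsto -z$, which interchanges the two sectors in the statement. So assume $g \in \Ha_+$ and set $h(x) = x g(x)$. Since $\laplace g$ is entire (Lemma~\ref{lm:7}), differentiation under the integral sign gives
\[
 (\laplace g)'(z) = -\laplace h(z) = -\int_0^\infty x\, g(x)\, e^{-zx}\, dx , \qquad z \in \C .
\]
On the region in the statement one has $\re z \ge -|z|$, so for $|z| \le 1$ I would simply bound
\[
 |(\laplace g)'(z)| \le \int_0^\infty x\, |g(x)|\, e^{x}\, dx =: C_0 < \infty ,
\]
the integral being finite because $|g(x)| = O(x^{-\delta x})$ as $x \to \infty$ while $g$ is bounded near $0$.

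For $|z| \ge 1$ I would integrate by parts. The function $h$ is absolutely continuous on each $[0, R]$, with $h(0) = \lim_{x \to 0^+} x g(x) = 0$ and $h'(x) = g(x) + x g'(x)$, and $h$ decays super-exponentially along $(0, \infty)$; hence, after letting $R \to \infty$, $\laplace h(z) = \tfrac{1}{z}\, \laplace h'(z)$ for $z \ne 0$. The key point is then that $h' = g + x g' \in \Ha$. Here $g \in \Ha_+$ by assumption, and $x g'(x) \in \Ha_+$ as well: $g'$ is analytic in $|\arg z| < \tfrac{\pi}{2}$, and Cauchy's integral formula for $g'(z)$ over a circle about $z$ of radius comparable to $|z|$, lying in a slightly wider sector, converts the super-exponential bound on $g$ there into $|g'(z)| = O\bigl(|z|^{-1}\, |z|^{-\delta' |z|}\bigr)$ uniformly for $|\arg z| \le \tfrac{\pi}{2} - \eps$; multiplying by $z$ yields $|z\, g'(z)| = O(|z|^{-\delta'' |z|})$ at infinity and $O(1)$ near $0$, while the vanishing on $(-\infty, 0)$ and analyticity are inherited from $g$. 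So Lemma~\ref{lm:7} applies to $h'$ and gives $|\laplace h'(z)| \le C\, \min\{1, |z|^{-1}\}$, whence
\[
 |(\laplace g)'(z)| = \frac{|\laplace h'(z)|}{|z|} \le \frac{C}{|z|^{2}} , \qquad |z| \ge 1 .
\]
Together with the bound for $|z| \le 1$ this gives $|(\laplace g)'(z)| \le C'\, \min\{1, |z|^{-2}\}$.

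I expect the only genuinely new ingredient — beyond Lemma~\ref{lm:7} and the integration by parts — to be the claim $x g'(x) \in \Ha_+$, i.e.\ transferring the super-exponential decay of $g$ to $g'$ by a Cauchy estimate in the sector; the rest is routine. Alternatively, one could bypass Lemma~\ref{lm:7} and reprove the bound directly along the lines of its proof: rotate the contour in $\laplace h(z) = -\int_0^\infty x g(x) e^{-zx}\, dx$ onto a ray $e^{i\phi}(0, \infty)$ with $\phi$ chosen so that $\re(z w) \ge c\, |z|\, |w|$ along the ray, and use $\int_0^\infty r\, e^{-c |z| r}\, dr = (c |z|)^{-2}$; the factor $r$, coming precisely from the $x$ in $x g(x)$, is what upgrades $|z|^{-1}$ to $|z|^{-2}$.
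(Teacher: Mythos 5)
Your argument is correct, but your primary route differs from what the paper has in mind. The paper omits the proof, saying only that ``a similar argument'' to Lemma~\ref{lm:7} applies; Lemma~\ref{lm:7} is proved in \cite{specHL} by rotating the contour of $\laplace g(z) = \int_0^\infty g(x)e^{-zx}\,dx$ onto a ray $e^{i\phi}(0,\infty)$ on which $\re(ze^{i\phi}) \ge c|z|$ and $g$ is super-exponentially small. The intended proof here is therefore the direct contour rotation you sketch at the very end: apply the same rotation to $(\laplace g)'(z) = -\int_0^\infty x\,g(x)\,e^{-zx}\,dx$ and use the extra factor of $r$ in $\int_0^\infty r\,e^{-cr|z|}\,dr = (c|z|)^{-2}$ to gain the additional power of $|z|^{-1}$. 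Your primary route instead reuses the \emph{statement} of Lemma~\ref{lm:7} rather than its proof, via integration by parts $(\laplace g)'(z) = -\tfrac{1}{z}\,\laplace h'(z)$ with $h(x)=xg(x)$ (the boundary terms vanish even for $\re z<0$, since the super-exponential decay of $h$ beats any exponential), together with the observation $h' = g + xg' \in \Ha$. The only nontrivial new ingredient is the claim $x g'(x) \in \Ha_+$, which you correctly establish by a Cauchy estimate for $g'$ over a circle of radius $c|z|$ around $z$ contained in a slightly wider subsector $\lvert\arg w\rvert < \tfrac{\pi}{2}-\tfrac{\eps}{2}$ where the super-exponential bound on $g$ still applies. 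Both routes are valid; the direct one is a bit shorter since it avoids re-verifying membership in $\Ha$, and is presumably what the author intended.
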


\begin{lemma}
\label{lm:density}

The class $\Ha$ is dense in the following sense: if $f$ is a Borel-measurable function and
\[
 \int_{-\infty}^\infty f(x) g(x) dx = 0 \qquad \text{for every $g \in \Ha$,}
\]
with the integral absolutely convergent, then $f(x) = 0$ for almost every $x \in \RR$.
\end{lemma}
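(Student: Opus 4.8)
The plan is to build, for each $s>0$, an explicit element of $\Ha_+$ that is strictly positive on $(0,\infty)$, and then to recover $f$ on $(0,\infty)$ from this one‑parameter family by the uniqueness theorem for the Laplace transform; the half‑line $(-\infty,0)$ is handled identically using $\Ha_-$ in place of $\Ha_+$. (If $f$ is complex‑valued, split it into real and imaginary parts, each of which is annihilated by every real $g\in\Ha$, so there is no loss in assuming $f$ real.)

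Fix $s>0$ and let $g_s(x)=e^{-sx\log(x+e)}$ for $x\ge 0$ and $g_s(x)=0$ for $x<0$; up to its (irrelevant) values on the negative axis this is the example recorded right after the definition of $\Ha_\pm$. It extends to the holomorphic function $z\mapsto e^{-sz\log(z+e)}$ on $\{\Re z>0\}$, since the branch point $z=-e$ of $\log(z+e)$ lies outside that half‑plane. For $z=re^{i\ph}$ with $|\ph|\le\tfrac\pi2-\eps$ we have $|z+e|\ge|z|$ and $|\arg(z+e)|<\tfrac\pi2$, so
\[
 \Re\bigl(z\log(z+e)\bigr)=r\cos\ph\,\log|z+e|-r\sin\ph\,\arg(z+e)\ge (\sin\eps)\,r\log r-\tfrac\pi2\,r\ge\tfrac12(\sin\eps)\,r\log r
\]
for all sufficiently large $r$. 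Hence $|g_s(z)|=O\bigl(|z|^{-\delta|z|}\bigr)$ with $\delta=\tfrac{s}{2}\sin\eps$ in the sector $|\arg z|\le\tfrac\pi2-\eps$, while $g_s(z)\to 1$ as $z\to 0$; thus $g_s\in\Ha_+\sub\Ha$, and likewise $\tilde g_s(x):=g_s(-x)\in\Ha_-\sub\Ha$.

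By hypothesis, for every $s>0$ the integral $\int_{-\infty}^\infty f(x)g_s(x)\,dx=\int_0^\infty f(x)e^{-sx\log(x+e)}\,dx$ converges absolutely and equals $0$. The map $\ph(x)=x\log(x+e)$ is a smooth, strictly increasing bijection of $(0,\infty)$ onto itself with $\ph'>0$, so setting $\chi=\ph^{-1}$ and $F(y)=f(\chi(y))\chi'(y)$ and substituting $y=\ph(x)$ gives $\int_0^\infty F(y)e^{-sy}\,dy=0$ for all $s>0$, with $\int_0^\infty|F(y)|e^{-sy}\,dy<\infty$. Taking $s=n+1$ and substituting $u=e^{-y}$ turns this into $\int_0^1 F(-\log u)\,u^n\,du=0$ for all integers $n\ge 0$, where $F(-\log u)\in\leb^1(0,1)$ because $\int_0^1|F(-\log u)|\,du=\int_0^\infty|F(y)|e^{-y}\,dy<\infty$. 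By the Weierstrass approximation theorem the finite signed measure $F(-\log u)\,du$ annihilates $C[0,1]$, hence vanishes, so $F=0$ a.e.\ on $(0,\infty)$; since $\chi$ is a diffeomorphism with $\chi'>0$, this forces $f=0$ a.e.\ on $(0,\infty)$. Running the same argument with $\tilde g_s\in\Ha_-$ yields $f=0$ a.e.\ on $(-\infty,0)$, and therefore $f=0$ a.e.\ on $\RR$.

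The one step that needs genuine care is checking that $g_s$ meets the super‑exponential decay requirement built into the definition of $\Ha_+$ — that is, the lower bound on $\Re(z\log(z+e))$ in proper subsectors of the right half‑plane displayed above; the change of variables and the appeal to Laplace‑transform uniqueness are then routine.
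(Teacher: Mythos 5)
Your proof is correct and follows essentially the same route as the paper's: pick $g_s(x)=e^{-sx\log(x+e)}\chi_{(0,\infty)}(x)\in\Ha_+$, substitute $y=x\log(x+e)$, and invoke injectivity of the Laplace transform on each half-line. The only difference is that you verify the super-exponential decay of $g_s$ in sub-sectors in detail (a step the paper asserts without proof) and spell out Laplace uniqueness via moments and Weierstrass, both of which are fine.
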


\begin{proof}
Let $\rho(x)=x\log(x + e)$  and consider $g(x)=e^{-s\rho(x)}\chi_{(0,\infty)}(x)$ for some $s>0$. As we already remarked above, $g\in\Ha$: it extends to an analytic function in the sector $|\arg(z)|<\tfrac{\pi}{2}$, and we have $|g(z)|=|z+e|^{-s|z|}=O(|z|^{-s|z|/2})$ as $|z|\rightarrow\infty$ and $|g(z)|=O(1)$ as $|z| \to 0$, uniformly in the sector $|\arg z| \le \tfrac{\pi}{2} - \epsilon$ for every $\epsilon > 0$. Note that $\rho^\prime(x)>0$ and, by substitution $y=\rho(x)$, we get
\begin{equation}
0 = \int_{-\infty}^\infty f(x) g(x) dx = \int_0^\infty e^{-sy}f(\rho^{-1}(y))(\rho^{-1})^\prime(y)dy 
\end{equation}
for every $s>0$. This means that the Laplace transform of $f(\rho^{-1}(y))(\rho^{-1})^\prime(y)$ vanishes for every $s>0$, and hence $f(\rho^{-1}(y))(\rho^{-1})^\prime(y)=0$ for almost every $y > 0$. It follows that $f(x)=0$ for almost every $x > 0$. A very similar argument shows that $f(x) = 0$ for almost every $x < 0$, and the proof is complete.
\end{proof}


\subsection{Stieltjes functions and Cauchy's integral formula for the upper half-plane}

One of the key steps in the proof of our main result involves Stieltjes-type representation of analytic functions in $\C \setminus (-\infty, 0)$. We deduce this result from Cauchy's integral formula for functions from the Hardy space $\mathscr{H}^p$ in the upper complex half-plane $H_+ = \{z \in \C : \im z > 0\}$. We begin with a number of standard definitions.

\begin{definition}
A function $f : (0, \infty) \to \RR$ is a \emph{Stieltjes function} if
\begin{equation}
\label{rep:st}
f(x)=\frac{c_1}{x}+c_2+\frac{1}{\pi}\int_0^\infty \frac{m(ds)}{x+s}, \qquad x>0,
\end{equation}
where $c_1,c_2\geqslant 0$ and $m$ is a non-negative Radon measure on $(0,\infty)$ which satisfies the integrability condition $\int_0^\infty \min\{1,s^{-1}\}m(ds)<\infty$.
\end{definition}

\begin{definition}
For $p \in (1, \infty)$, by $\mathscr{H}^p$ we denote the space of functions $f$ analytic in $H_+$ such that $|f(x+iy)|^p$ is integrable with respect to $x \in \RR$ for each $y>0$, and $\int_\RR |f(x+iy)|^p dx$ is a bounded function of $y > 0$.
\end{definition}

\begin{definition}
A function $f$ analytic in $H_+$ is an \emph{outer function} if
\begin{equation}
\log |f(x + i y)|=\frac{1}{\pi} \int_0^\infty \frac{y}{(x-s)^2+y^2}\log|f(s)|ds
\end{equation}
for every $x \in \RR$ and $y > 0$. Here for $s \in \RR$ the symbol $f(s)$ denotes the limit $\lim_{t \to 0^+} f(s + i t)$, which necessarily exists for almost every $s \in \RR$.
\end{definition}

\begin{definition}
The Nevanlinna class $\mathscr{N}^+$ is the set of functions $f$ analytic in $H_+$, which can be represented as $f=f_1/f_2$, where $f_1$ and $f_2$ are analytic and bounded in $H_+$, and $f_2$ is outer.
\end{definition}

The class of outer functions on the unit disk, and the Nevanlinna class $\mathscr{N}^+$ on the unit disk, are defined in a similar way. Note that composition with a conformal map between the unit disk and the upper complex half-plane defines a bijection between the corresponding classes of outer functions, as well as between the corresponding Nevanlinna classes $\mathscr{N}^+$. We refer to~\cite{branges,rosenblum} for further details.

We will use the following standard properties of Stieltjes functions, Hardy space $\mathscr{H}^p$, and the Nevanlinna class $\mathscr{N}^+$.

\begin{lemma}[\cite{symzero1}, Proposition~2.1(b)]
\label{lm:stieltjes}
A function $f$ is a Stieltjes function if and only in $f(z)\geqslant 0$ for $z>0$ and either $f$ is constant, or $f$ extends to a holomorphic function in $\CC\setminus (\infty,0]$, which swaps the upper and the lower complex half-planes, i.e.\@ if $\Im z>0$, then $\Im f(z)<0$ and if $\Im z<0$, then $\Im f(z)>0$.
\end{lemma}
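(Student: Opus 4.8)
The plan is to prove the two implications separately: the direction ``Stieltjes $\Rightarrow$ mapping property'' by a direct computation, and the converse by reducing $f$ to a Nevanlinna--Pick function and invoking its Herglotz representation (which dovetails with the Hardy-space machinery recalled above).

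\emph{From the representation to the mapping property.} Assuming $f$ is given by~\eqref{rep:st}, the integrability condition $\int_0^\infty\min\{1,s^{-1}\}m(ds)<\infty$ makes $\int_0^\infty(z+s)^{-1}m(ds)$ converge locally uniformly on $\CC\setminus(-\infty,0]$, so $z\mapsto c_1/z+c_2+\tfrac1\pi\int_0^\infty(z+s)^{-1}m(ds)$ is a holomorphic extension of $f$. If $c_1=0$ and $m=0$, this extension is the constant $c_2\ge 0$; otherwise, for $\Im z\ne 0$ one computes
\[
\Im f(z)=-c_1\,\frac{\Im z}{|z|^2}-\frac1\pi\int_0^\infty\frac{\Im z}{|z+s|^{2}}\,m(ds),
\]
which is strictly of the sign opposite to $\Im z$, while clearly $f(x)\ge 0$ for $x>0$. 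This settles one direction.

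\emph{From the mapping property to the representation.} The constant case is trivial, so suppose $f$ is non-constant, holomorphic on $\CC\setminus(-\infty,0]$, real and $\ge0$ on $(0,\infty)$, with $\Im f(z)<0$ whenever $\Im z>0$. Then $N\mdef-f$ maps $H_+$ into $H_+$, hence is a Nevanlinna--Pick function and admits the Herglotz representation
\[
N(z)=\alpha+\beta z+\int_\RR\Bigl(\frac{1}{t-z}-\frac{t}{1+t^2}\Bigr)\nu(dt),\qquad \alpha\in\RR,\ \beta\ge 0,\ \int_\RR\frac{\nu(dt)}{1+t^2}<\infty.
\]
Since $N$ extends holomorphically and is real across $(0,\infty)$, its representing measure $\nu$ (the weak-$*$ limit of $\tfrac1\pi\Im N(x+i\eps)\,dx$) puts no mass on $(0,\infty)$, so $\nu$ is carried by $(-\infty,0]$.

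\emph{Using positivity to clean up.} This is the crux. For $t\le 0$ the integrand $(t-x)^{-1}-t/(1+t^2)=(1+tx)/((t-x)(1+t^2))$ is increasing in $x>0$ with limit $-t/(1+t^2)\ge 0$, so $N(x)-\beta x$ is non-decreasing and, by monotone convergence, $N(x)-\beta x\to\alpha+\int_{(-\infty,0]}\tfrac{-t}{1+t^2}\nu(dt)\in(-\infty,\infty]$ as $x\to\infty$. Since $f=-N\ge 0$ rules out $N(x)\to+\infty$, we must have $\beta=0$ and $\int_{(-\infty,-1]}|t|^{-1}\nu(dt)<\infty$; together with $\nu((-1,0])<\infty$ (forced by $\int\nu(dt)/(1+t^2)<\infty$) this yields $\int_{(-\infty,0]}(1+|t|)^{-1}\nu(dt)<\infty$, so the compensator $t/(1+t^2)$ may be absorbed into a constant. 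Reflecting $t\mapsto-s$, pulling out the possible atom at $0$, and setting $c_1=\nu(\{0\})\ge0$ and $m=\pi$ times the image of $\nu|_{(-\infty,0)}$ under $t\mapsto-t$, we arrive at $f(z)=c_1/z+c_2+\tfrac1\pi\int_0^\infty(z+s)^{-1}m(ds)$ with $c_2=\lim_{x\to\infty}f(x)\ge0$ and $\int_0^\infty\min\{1,s^{-1}\}m(ds)<\infty$, which is~\eqref{rep:st}. Finally, Schwarz reflection $f(\bar z)=\overline{f(z)}$ shows that for non-constant $f$ the condition $\Im f<0$ on $H_+$ is equivalent to $\Im f>0$ on $H_-$, matching the statement. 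The one genuinely non-routine point is precisely this last step: exploiting the \emph{global} sign condition $f\ge 0$ on all of $(0,\infty)$ to eliminate the linear term and upgrade the automatic Herglotz integrability $\int\nu(dt)/(1+t^2)<\infty$ to the Stieltjes integrability $\int_0^\infty\min\{1,s^{-1}\}m(ds)<\infty$; everything else is bookkeeping. Alternatively, one can bypass the Herglotz theorem and work directly from Cauchy's integral formula in $\mathscr{H}^p(H_+)$ after composing $f$ with a M\"obius map sending its target half-plane onto the unit disk.
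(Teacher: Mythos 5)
The paper offers no proof of this lemma; it is imported as a citation from \cite{symzero1}, Proposition~2.1(b), so there is no in-paper argument to compare your work against. Your proof is correct and is the standard one: the forward direction is the direct sign computation you give (noting that the integrability condition yields locally uniform convergence on $\CC\setminus(-\infty,0]$, and that $\Im f$ has the sign opposite to $\Im z$ unless $c_1=0$ and $m=0$), while the converse runs through the Nevanlinna--Herglotz representation of $N=-f$ combined with the cleanup step you correctly identify as the crux -- using $f\ge 0$ on all of $(0,\infty)$ to force $\beta=0$ and to upgrade the automatic bound $\int_\RR\nu(dt)/(1+t^2)<\infty$ to the Stieltjes integrability $\int_0^\infty\min\{1,s^{-1}\}\,m(ds)<\infty$. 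The remaining bookkeeping (the Stieltjes--Perron inversion to confine $\nu$ to $(-\infty,0]$, the monotone-convergence computation of $\lim_{x\to\infty}\bigl(N(x)-\beta x\bigr)$, the absorption of the compensator $t/(1+t^2)$ into the additive constant once its integrability is established, and the Schwarz reflection relating the two sign conditions in the statement) is all carried out correctly; this is essentially the proof given for the analogous characterization in \cite{bernstein}.
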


\begin{lemma}[\cite{duren}, Theorem~3.2; \cite{rosenblum}, Theorem~4.29]
\label{lm:BTcircle2}
If $f$ is an analytic function in the unit disk with non-negative real part, then $f$ is an outer function.
\end{lemma}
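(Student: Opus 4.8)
The plan is to prove the disk version directly, using the classical description of outer functions inside the Nevanlinna class $\mathscr{N}^+$: a function $g \in \mathscr{N}^+$ with $g \not\equiv 0$ satisfies $\log|g^*| \in L^1(\partial\mathbb{D})$ and $\log|g(0)| \leqslant \frac{1}{2\pi}\int_0^{2\pi}\log|g^*(e^{i\theta})|\,d\theta$, with equality precisely when $g$ is outer (the strict inequality corresponds to a nontrivial Blaschke or singular inner factor). Here $g^*$ denotes the non-tangential boundary function. The whole proof will consist of putting $f$ and $1/f$ into $\mathscr{N}^+$ and applying this inequality to both.

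First I would reduce to the case $\Re f > 0$ on all of $\mathbb{D}$: a nonzero constant is outer, so we may assume $f$ is non-constant, and then, since $\Re f$ is a non-negative harmonic function that does not vanish identically, the minimum principle forces $\Re f > 0$ throughout $\mathbb{D}$. In particular $f$ is zero-free, $1/f$ is analytic, and $\Re(1/f) = \Re f/|f|^2 > 0$, so both $f$ and $1/f$ map $\mathbb{D}$ into the open right half-plane.

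Next I would show $f \in H^p(\mathbb{D})$ for every $p \in (0,1)$, hence $f \in \mathscr{N}^+$, and likewise for $1/f$. Since $f$ omits $(-\infty,0]$, the power $f^p \mdef \exp(p\operatorname{Log} f)$ is well defined, $|\arg f^p| < p\tfrac{\pi}{2}$, and therefore $\Re(f^p) \geqslant \cos(p\tfrac{\pi}{2})\,|f|^p$. Applying the mean value property to the positive harmonic function $\Re(f^p)$ gives
\[
\frac{1}{2\pi}\int_0^{2\pi}|f(re^{i\theta})|^p\,d\theta \;\leqslant\; \frac{1}{\cos(p\pi/2)}\cdot\frac{1}{2\pi}\int_0^{2\pi}\Re\bigl(f(re^{i\theta})^p\bigr)\,d\theta \;=\; \frac{\Re(f(0)^p)}{\cos(p\pi/2)},
\]
which is bounded uniformly in $r \in (0,1)$; thus $f \in H^p(\mathbb{D}) \subseteq \mathscr{N}^+$, and the same argument applies to $1/f$.

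Finally — the crux — I would apply the $\mathscr{N}^+$-inequality above to $g = f$ and to $g = 1/f$. Since $f^* \neq 0$ a.e. (because $\log|f^*| \in L^1$), the boundary function of $1/f$ is $1/f^*$, so the two inequalities read $\log|f(0)| \leqslant \frac{1}{2\pi}\int_0^{2\pi}\log|f^*|$ and $-\log|f(0)| \leqslant -\frac{1}{2\pi}\int_0^{2\pi}\log|f^*|$. Adding them gives $0 \leqslant 0$, so both are equalities; by the characterisation of outer functions this means $f$ is outer. The only points requiring care are the edge case $f \equiv 0$ (which I would simply exclude, since the zero function is not outer) and the translation between the disk and the half-plane: as observed in the remark preceding this lemma, composition with a conformal map $H_+ \to \mathbb{D}$ is a bijection between the respective classes of outer functions and between the respective classes $\mathscr{N}^+$, so the statement for $H_+$ used elsewhere follows from the disk version proved here. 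There is no deep obstacle; the content is classical, and the only mildly nonstandard ingredient is the trick of exploiting the inequality for both $f$ and $1/f$ to squeeze out equality.
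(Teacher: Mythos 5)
The paper does not prove this lemma at all: it is stated as a citation to Duren (Theorem~3.2) and Rosenblum--Rovnyak (Theorem~4.29), so there is no ``paper proof'' to compare against. Your proof is a correct, self-contained argument and is essentially the standard one. The reduction to $\Re f > 0$ via the minimum principle is fine (and handles the constant and identically-zero edge cases). The claim that $f$ and $1/f$ lie in $H^p$ for every $p\in(0,1)$, hence in $\mathscr{N}^+$, is the classical Smirnov estimate: take the principal branch of $f^p$ (legitimate since $f$ omits $(-\infty,0]$), note $|\arg f^p|<p\pi/2$ so that $\Re(f^p)\geqslant \cos(p\pi/2)\,|f|^p$, and apply the mean value property of the positive harmonic function $\Re(f^p)$. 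The finishing move --- applying the Jensen--Smirnov inequality $\log|g(0)|\leqslant\frac{1}{2\pi}\int_0^{2\pi}\log|g^*|$, valid for nonzero $g\in\mathscr{N}^+$ with equality precisely for outer $g$, to both $g=f$ and $g=1/f$ and adding to force equality --- is exactly the classical way of showing that a function invertible in $\mathscr{N}^+$ is outer. You do implicitly use $f(0)\neq 0$ when writing $\log|f(0)|$, but you have already established that $f$ is zero-free, so that is in order. Nothing is missing; the argument is sound and matches what the cited references would give.
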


\begin{lemma}
\label{lm:BTcircle}
If $f$ is an analytic function in $H_+$ and $\re f \ge 0$ in $H_+$, then $f$ is an outer function. If $f$ is a Stieltjes function, then (the analytic extension of) $f$ is an outer function.
\end{lemma}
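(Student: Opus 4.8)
The plan is to deduce both assertions from the disk version, Lemma~\ref{lm:BTcircle2}, by transporting it to the half-plane through a conformal map, and then to reduce the Stieltjes case to the half-plane statement by a rotation by $i$.

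First I would fix a conformal map $\varphi$ from the unit disk $\mathbb{D}=\{w\in\C:|w|<1\}$ onto $H_+$, say the Cayley transform $\varphi(w)=i(1+w)/(1-w)$. If $f$ is analytic in $H_+$ with $\re f\ge 0$, then $f\circ\varphi$ is analytic in $\mathbb{D}$ with non-negative real part, so by Lemma~\ref{lm:BTcircle2} it is an outer function on $\mathbb{D}$. Since composition with $\varphi$ gives a bijection between the outer functions of $\mathbb{D}$ and those of $H_+$ (the remark recalled just before Lemma~\ref{lm:stieltjes}, see~\cite{branges,rosenblum}), $f$ itself is outer, which is the first claim.

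For the Stieltjes case, I would invoke Lemma~\ref{lm:stieltjes}: a Stieltjes function $f$ is either a non-negative constant (and a positive constant is trivially outer, the identically zero function being the usual degenerate exception), or it extends holomorphically to $\C\setminus(-\infty,0]$ and maps $H_+$ into the open lower half-plane $\{\im w<0\}$. In the latter case set $g\mdef if$; then $g$ is analytic in $H_+$ and maps it into $i\cdot\{\im w<0\}=\{\re w>0\}$, so $\re g>0$ on $H_+$, and the first part applies to $g$. Since $|g|=|f|$ pointwise in $H_+$ and also for the boundary limits on $\RR$, the Poisson identity defining an outer function holds for $f$ exactly when it holds for $g$; hence $f$ is outer.

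The whole argument is essentially bookkeeping, so I do not anticipate a genuine obstacle. The one place that deserves care is the passage between the two notions of outer function on $\mathbb{D}$ and on $H_+$, which rests on the cited conformal-invariance statement, together with the elementary observation that multiplying a Stieltjes function by $i$ converts the half-plane-swapping property of Lemma~\ref{lm:stieltjes} into the sign condition $\re g\ge 0$ needed to apply the first part.
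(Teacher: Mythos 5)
Your proposal is correct and follows essentially the same route as the paper: transport the disk statement (Lemma~\ref{lm:BTcircle2}) to $H_+$ via a Cayley transform for the first claim, then reduce the Stieltjes case to the first claim by observing that Lemma~\ref{lm:stieltjes} forces $\re(if)\ge 0$ on $H_+$. Your two small extra remarks --- handling the constant Stieltjes function separately, and noting that $|if|=|f|$ so outerness transfers from $if$ to $f$ --- are harmless clarifications of steps the paper leaves implicit.
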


\begin{proof}
Suppose that $\re f \ge 0$ in $H_+$, and consider the conformal map $w(z)=-i (z-1) / (z+1)$ from the unit disk $\{z \in \CC: |z|<1\}$ onto the upper complex half-plane $H_+$. If $g(z) = f(w(z))$ for $z \in B_1$, then $g$ is an analytic function in the unit disk, with non-negative real part. By Lemma~\ref{lm:BTcircle2}, $g$ is an outer function on the unit disk. It follows that $f(z) = -i g(w^{-1}(z))$ is an outer function in the upper complex half-plane $H_+$.

If $f$ is the analytic extension to the upper complex half-plane $H_+$ of a Stieltjes function, then, by Lemma~\ref{lm:stieltjes}, $\re (i f(z)) \ge 0$ for every $z \in H_+$. By the first part of the proof, $i f$ is outer, and hence also $f$ is outer.
\end{proof}

\begin{lemma}[\cite{rosenblum}, 5.14(iv)]
\label{lm:sum.is.N}
The sum of functions from $\mathscr{N}^+$ is in $\mathscr{N}^+$. The product of functions from $\mathscr{N}^+$ is in $\mathscr{N}^+$. The ratio of a function in $\mathscr{N}^+$ and an outer function is in $\mathscr{N}^+$.
\end{lemma}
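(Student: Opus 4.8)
The plan is to reduce all three assertions to two elementary closure properties: first, that the functions analytic and bounded in $H_+$ are closed under sums and products, which is immediate since $|f+g|\le|f|+|g|$ and $|fg|=|f|\,|g|$; and second, that outer functions are zero-free in $H_+$ and closed under multiplication. Granting these, write $f=f_1/f_2$ and $g=g_1/g_2$, where $f_1,f_2,g_1,g_2$ are analytic and bounded in $H_+$ and $f_2,g_2$ are outer (so in particular $f_2,g_2$ are non-vanishing, and $f,g$ are genuinely analytic in $H_+$).

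For the product, $fg=(f_1g_1)/(f_2g_2)$: the numerator is analytic and bounded and the denominator is outer, so $fg\in\mathscr{N}^+$. For the sum, clear denominators to get $f+g=(f_1g_2+g_1f_2)/(f_2g_2)$: again the numerator is analytic and bounded (a sum of products of such functions) and the denominator is outer. For the ratio with an outer function $h$: since $h$ is zero-free in $H_+$, the quotient $f/h$ is analytic there, and $f/h=f_1/(f_2h)$ with $f_1$ analytic and bounded and $f_2h$ outer; hence $f/h\in\mathscr{N}^+$.

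It remains to record the two closure properties of outer functions. That an outer $h$ has no zeros in $H_+$ is because $\log|h|$ is the Poisson integral of its boundary log-modulus, hence harmonic and finite throughout $H_+$, so $|h|>0$ there. For closure under products, let $h_1,h_2$ be outer. Both admit nontangential boundary limits a.e.\@ on $\RR$, hence so does $h_1h_2$, with $\log|h_1h_2(s)|=\log|h_1(s)|+\log|h_2(s)|$ for a.e.\@ $s$; this boundary function is integrable against the Poisson kernel since each summand is (using $\log^{\pm}|ab|\le\log^{\pm}|a|+\log^{\pm}|b|$). By linearity of the Poisson integral, $\log|h_1h_2(x+iy)|=\log|h_1(x+iy)|+\log|h_2(x+iy)|$ is the Poisson integral of its own boundary log-modulus, i.e.\@ $h_1h_2$ is outer.

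I do not expect a serious obstacle here; the only points needing a little care are the a.e.\@ multiplicativity of boundary limits and the Poisson-integrability of $\log|h_1h_2|$, both standard facts of Hardy/Nevanlinna theory. Alternatively, one may transport everything to the unit disk via the conformal map $w(z)=-i(z-1)/(z+1)$, exactly as in the proof of Lemma~\ref{lm:BTcircle}, and quote the disk versions of these statements; the half-plane argument above is equally short.
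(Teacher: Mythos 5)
The paper does not prove this lemma; it is cited verbatim from Rosenblum--Rovnyak, so your argument is a self-contained substitute rather than a parallel to something in the text. Your treatment of sums and products is fine: given $f=f_1/f_2$, $g=g_1/g_2$ with $f_1,f_2,g_1,g_2$ bounded analytic and $f_2,g_2$ bounded outer, the rewritings $fg=(f_1g_1)/(f_2g_2)$ and $f+g=(f_1g_2+g_1f_2)/(f_2g_2)$ produce bounded analytic numerators over the bounded outer denominator $f_2g_2$, and your verification that the product of two outer functions is outer (a.e.\@ multiplicativity of boundary limits, $\log^{\pm}|ab|\le\log^{\pm}|a|+\log^{\pm}|b|$, then linearity of the Poisson integral) is correct.

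There is, however, a genuine gap in the quotient case. You write $f/h=f_1/(f_2h)$ and assert that $f_2h$ is outer; that is true, but the definition of $\mathscr{N}^+$ you are working with requires the denominator to be \emph{bounded and} outer, and an outer function $h$ need not be bounded. (Indeed, this is not a vacuous concern for the present paper: the lemma is applied with $h(\lambda)=\lambda\phi_0(-\lambda^2)$, which grows like $\lambda^{2/\alpha-1}$ along $i\RR_+$ and is therefore unbounded.) As written, $f_2h$ may fail to be bounded, so $f_1/(f_2h)$ is not yet a representation of the required form. The standard fix is to first observe that any outer $h$ is a ratio of two bounded outer functions: split its boundary log-modulus $\log|h(s)|=\Omega(s)-\omega(s)$ into nonnegative parts, let $h_1,h_2$ be the outer functions with $\log|h_1(s)|=-\omega(s)$ and $\log|h_2(s)|=-\Omega(s)$ (both bounded by $1$ since their log-moduli are $\le 0$); then $h_1/h_2$ is outer with the same modulus as $h$, hence $h=c\,h_1/h_2$ for some unimodular constant $c$. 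Consequently $1/h=\bar c\,h_2/h_1\in\mathscr{N}^+$, and the ratio case follows from the product case already established: $f/h=f\cdot(1/h)\in\mathscr{N}^+$. Alternatively (and more simply for your write-up), one may apply $f/h=(f_1h_2)/(f_2h_1)$: the numerator is bounded analytic and the denominator $f_2h_1$ is bounded outer. Your suggestion of transporting to the disk does not by itself close this gap, since the disk version has the same subtlety unless one quotes it from a reference, which is what the paper already does.
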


\begin{lemma}
\label{lm:id.is.N}
The function $f(z) = \sqrt{z}$ is outer.
\end{lemma}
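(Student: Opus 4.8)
The plan is to derive the statement directly from Lemma~\ref{lm:BTcircle}, which says that any function analytic in $H_+$ with non-negative real part there is outer. Take the principal branch of the square root, so that $z \mapsto \sqrt{z}$ is analytic on $\C \setminus (-\infty, 0]$; in particular it is analytic on $H_+$, because $H_+$ is the \emph{open} upper half-plane and hence disjoint from $(-\infty, 0]$. Thus it suffices to check that $\re \sqrt{z} \ge 0$ for every $z \in H_+$.

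For this, write $z = |z| e^{i \arg z}$ with $\arg z \in (0, \pi)$. Then $\sqrt{z} = |z|^{1/2} e^{i (\arg z)/2}$ with $\tfrac{1}{2}\arg z \in (0, \tfrac{\pi}{2})$, so $\re \sqrt{z} = |z|^{1/2} \cos\bigl(\tfrac{1}{2}\arg z\bigr) > 0$. Lemma~\ref{lm:BTcircle} then gives that $\sqrt{z}$ is outer, which is exactly the assertion of Lemma~\ref{lm:id.is.N}.

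Since the argument is immediate once Lemma~\ref{lm:BTcircle} is in hand, there is no real obstacle here; the only points deserving a word of care are the two already mentioned — that the principal square root is genuinely holomorphic on all of $H_+$, and that an outer function need not be bounded, so the unboundedness of $\sqrt{z}$ is irrelevant. If one wanted a self-contained verification instead of invoking Lemma~\ref{lm:BTcircle}, one could check the defining identity directly: $\log|\sqrt{x + i y}| = \tfrac14 \log(x^2 + y^2)$ has boundary trace $s \mapsto \tfrac12 \log|s|$, and since $\log|z|$ is harmonic in $H_+$ with only logarithmic growth at infinity and locally integrable boundary values, it coincides with the Poisson integral of its boundary trace; halving this identity yields precisely the required representation of $\log|\sqrt{z}|$.
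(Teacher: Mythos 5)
Your proof is correct, and it takes a slightly different route than the paper. You apply the first part of Lemma~\ref{lm:BTcircle} directly to $\sqrt{z}$, by computing $\re \sqrt{z} = |z|^{1/2}\cos(\tfrac12 \arg z) > 0$ on $H_+$. The paper instead invokes the second part of Lemma~\ref{lm:BTcircle}: it observes that $1/\sqrt{z}$ is a Stieltjes function (it maps $H_+$ into the lower half-plane and is positive on $(0,\infty)$), hence outer, and then uses the fact that the reciprocal of an outer function is outer. Both arguments are one-liners once Lemma~\ref{lm:BTcircle} is available; yours is arguably a touch more self-contained, as it sidesteps the Stieltjes characterization and the (standard, but worth noting) fact that outerness is preserved under taking reciprocals. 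Your closing remark sketching the Poisson-integral identity for $\log|\sqrt{z}|$ is also sound and gives a third, fully direct verification. No gaps.
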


\begin{proof}
The function $1 / f(z) = 1 / \sqrt{z}$ is a Stieltjes function. Hence, $1 / f$ is outer, and it follows that $f$ is outer, too.
\end{proof}

\begin{lemma}[\cite{rosenblum}, Theorems~5.6 and~5.23(i)] 
\label{lm:Np.to.H2}
Let $f$ be in the Nevanlinna class $\mathscr{N}^+$. Then the boundary limit $f(x) = \lim_{y \to 0^+} f(x + i y)$ exists for almost every $x \in \RR$. Furthermore, if
\begin{equation}
\int_\RR |f(x)|^p dx <\infty
\end{equation}
for some $p \in (1, \infty)$, then $f\in\mathscr{H}^p$.
\end{lemma}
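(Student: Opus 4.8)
The plan is to transfer the whole statement to the unit disk by means of the conformal map $w(z) = -i(z-1)/(z+1)\colon \mathbb{D}\to H_+$ already used in the proof of Lemma~\ref{lm:BTcircle}, to read off the disk version from Fatou's theorem and Smirnov's theorem, and then to transfer back. First, I would set $g = f\circ w$; since composition with $w$ identifies $\mathscr{N}^+(H_+)$ with $\mathscr{N}^+(\mathbb{D})$, we get $g\in\mathscr{N}^+(\mathbb{D})$. Writing $g = g_1/g_2$ with $g_1,g_2$ bounded and analytic in $\mathbb{D}$ and $g_2$ outer, Fatou's theorem gives nontangential boundary limits of $g_1$ and $g_2$ almost everywhere on $\mathbb{T}$, and the boundary trace of $g_2$ is non-zero almost everywhere (an outer function has integrable log-modulus, so cannot vanish on a set of positive measure); hence $g$ has nontangential boundary limits almost everywhere on $\mathbb{T}$. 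Since $w$ extends to a homeomorphism $\overline{\mathbb{D}}\setminus\{-1\}\to\overline{H_+}$ which near each boundary point $\ne -1$ is a conformal diffeomorphism, it carries Stolz angles to Stolz angles; as the vertical segments $x+iy$, $y\to0^+$, lie in every Stolz angle at $x$, this yields the existence of $\lim_{y\to0^+}f(x+iy)$ for almost every $x\in\RR$, which is the first assertion.

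For the second assertion, assuming $\int_\RR|f(x)|^p\,dx<\infty$ for the boundary function just constructed, I would introduce $G(z) = (w'(z))^{1/p}g(z)$ (principal branch). Since $w'(z) = -2i(1+z)^{-2}$ and $1+z$ is an outer function on $\mathbb{D}$, the factor $(w'(z))^{1/p}$ lies in $\mathscr{N}^+(\mathbb{D})$ by the reasoning of Lemma~\ref{lm:id.is.N} together with Lemma~\ref{lm:sum.is.N}, so $G\in\mathscr{N}^+(\mathbb{D})$; and the change of variables $x = w(e^{i\vartheta})$, for which $dx = |w'(e^{i\vartheta})|\,d\vartheta$, gives $\int_{\mathbb{T}}|G(e^{i\vartheta})|^p\,d\vartheta = \int_\RR|f(x)|^p\,dx<\infty$, so the boundary trace of $G$ is in $L^p(\mathbb{T})$. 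Now I would invoke Smirnov's theorem on the disk: factoring $G = \phi O$ with $\phi$ inner and $O$ outer, we have $|G|\le|O|$ and $\log|O|$ equals the Poisson integral of $\log|O^*| = \log|G^*|$, so Jensen's inequality for the Poisson kernel gives $|G(re^{i\vartheta})|^p\le P[|G^*|^p](re^{i\vartheta})$; integrating in $\vartheta$ shows $\sup_{r<1}\int_{\mathbb{T}}|G(re^{i\vartheta})|^p\,d\vartheta\le\int_{\mathbb{T}}|G^*|^p\,d\vartheta<\infty$, i.e.\ $G\in\mathscr{H}^p(\mathbb{D})$. Transferring back through the standard isometric identification of $\mathscr{H}^p(H_+)$ with $\mathscr{H}^p(\mathbb{D})$ via $f\mapsto(w')^{1/p}(f\circ w)$ then gives $f\in\mathscr{H}^p(H_+)$.

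The step I expect to be the main obstacle is this last identification of the two Hardy spaces across the conformal map. The definition of $\mathscr{H}^p(H_+)$ used here controls the $L^p$-norm of $f$ along the \emph{horizontal} lines $\{\im z = y\}$, whereas $\mathscr{H}^p(\mathbb{D})$ controls $L^p$-norms along \emph{circles}, and $w$ maps circles to circles (or lines) tangent to $\RR$ rather than to horizontal lines; bridging this requires either a separate harmonic-majorant argument carried out directly in $H_+$ — using that $|f|^p$ is subharmonic and that, for $f\in\mathscr{N}^+$, the least harmonic majorant of $\log^+|f|$ is the Poisson integral of its boundary trace — or an appeal to the standard references (\cite{rosenblum,branges}). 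It is worth noting that Smirnov's theorem genuinely needs membership in $\mathscr{N}^+$ rather than in the full Nevanlinna class: the Jensen estimate above closes precisely because for $\mathscr{N}^+$-functions the relevant harmonic majorant is the Poisson integral of the boundary data.
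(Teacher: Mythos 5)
The paper does not prove this lemma; it cites Rosenblum--Rovnyak, Theorems~5.6 and 5.23(i), and moves on. There is therefore no internal proof to compare against, and I assess your argument on its own merits. Your overall architecture --- conformal transfer to the disk, Fatou and the canonical factorization for the boundary-limit statement, Smirnov and Jensen for the $\mathscr{H}^p$ statement --- is the standard one, and your treatment of the first assertion is correct: the paper itself notes that $w$ identifies the two Nevanlinna classes, the nontangential limit of $g_1/g_2$ exists almost everywhere because $g_2$ is outer (hence $\log|g_2^*|\in L^1$, hence $g_2^*\ne0$ a.e.), and Stolz angles transport under the conformal map as you describe. The disk-side work on the second assertion is also correct as far as it goes: $(w')^{1/p}\in\mathscr{N}^+(\mathbb{D})$ because $1+z$ is outer, the change of variables makes $G^*\in L^p(\mathbb{T})$, and your Smirnov/Jensen estimate genuinely uses $\mathscr{N}^+$ to ensure that $\log|G|$ is majorized by the Poisson integral of its boundary trace.

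The gap you flag at the end is, however, genuine and is not a technicality. The implication you need is exactly ``$G\in\mathscr{H}^p(\mathbb{D})\Rightarrow f\in\mathscr{H}^p(H_+)$'', and invoking ``the standard isometric identification $f\mapsto(w')^{1/p}(f\circ w)$'' at this point is circular: the weight $(w')^{1/p}$ is chosen so that the \emph{boundary} $L^p$-norms match, but the interior $\mathscr{H}^p$-membership criteria are phrased over two different families of curves (concentric circles versus horizontal lines, whose preimages are horocycles tangent to $\mathbb{T}$ at $-1$), and the factor $|w'|$ appearing in $|G|^p=|w'|\,|f\circ w|^p$ does not peel off when one tries to push a harmonic majorant of $|G|^p$ forward to a harmonic majorant of $|f|^p$. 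So one cannot conclude without an additional argument of the same depth as the original claim (e.g.\ Duren, Theorem~11.2).

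The cleanest way to close the gap is to drop the auxiliary function $G$ entirely and transfer the $\log$-modulus Poisson bound instead of the $p$-th-power bound, because the $\log$-modulus estimate \emph{is} conformally invariant via harmonic measure. Concretely: for $g=f\circ w\in\mathscr{N}^+(\mathbb{D})$ the canonical factorization gives
\[
\log|g(z)|\le\int_{\mathbb{T}}\log|g^*|\,d\omega_{\mathbb{D}}(z,\cdot),
\]
where $\omega_{\mathbb{D}}$ is harmonic measure; by conformal invariance of harmonic measure and $g^*\circ w^{-1}=f$ on $\RR$ this is exactly
\[
\log|f(\zeta)|\le\frac{1}{\pi}\int_\RR\frac{\im\zeta}{|t-\zeta|^2}\log|f(t)|\,dt,\qquad\zeta\in H_+.
\]
(The integral is well defined because $\log|g^*|\in L^1(\mathbb{T})$ transfers to $\int_\RR|\log|f(t)||\,(1+t^2)^{-1}dt<\infty$.) Now apply Jensen's inequality for the half-plane Poisson kernel (this is the point where $\int_\RR|f|^p\,dx<\infty$ is used) to get $|f(x+iy)|^p\le\frac{1}{\pi}\int_\RR\frac{y}{(x-t)^2+y^2}|f(t)|^p\,dt$, and integrate in $x$ to obtain $\int_\RR|f(x+iy)|^p\,dx\le\int_\RR|f(t)|^p\,dt$ for every $y>0$, which is $f\in\mathscr{H}^p(H_+)$ by the paper's definition. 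This route stays faithful to the definition used in the paper and avoids the unproved Hardy-space isomorphism altogether.
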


\begin{theorem}[\cite{duren}, Theorem~11.8] 
\label{thm:branges}
Let $f$ be in $\mathscr{H}^p$ for some $p \in (1, \infty)$. Then
\begin{equation}
f(z)=\frac{1}{2\pi i }\int_\RR \frac{f(t)}{t-z}dt
\end{equation}
when $\Im z>0$ and
\begin{equation}
0=\frac{1}{2\pi i }\int_\RR \frac{f(t)}{t-z}dt
\end{equation}
when $\Im z<0$.
\end{theorem}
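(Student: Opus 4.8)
The plan is to prove this by Cauchy's integral theorem on a large half-disc in $H_+$, followed by two limiting arguments. Set $M=\sup_{y>0}\int_\RR|f(x+iy)|^p\,dx$, which is finite by the definition of $\mathscr{H}^p$. We use two standard facts about the Hardy space on $H_+$: the boundary function $f(t)=\lim_{y\to0^+}f(t+iy)$ exists for almost every $t$, belongs to $L^p(\RR)$, and $f(\cdot+iy)\to f(\cdot)$ in $L^p(\RR)$ as $y\to0^+$. Fix $z$ with $\Im z>0$ and $\eps\in(0,\Im z)$. For $R$ large enough that $z$ lies inside, apply Cauchy's integral formula to $f$ on the region bounded by the horizontal segment $\gamma_R=[\Re z-R,\Re z+R]+i\eps$ and the upper semicircular arc $C_R$ of radius $R$ centred at $\Re z+i\eps$, obtaining $f(z)=\frac1{2\pi i}\int_{\gamma_R}\frac{f(\zeta)}{\zeta-z}\,d\zeta+\frac1{2\pi i}\int_{C_R}\frac{f(\zeta)}{\zeta-z}\,d\zeta$. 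Since $f(\cdot+i\eps)\in L^p$ and $(t+i\eps-z)^{-1}\in L^{p'}$ (here $p\in(1,\infty)$ is used, so that $p'<\infty$ and the $1/|t|$ decay is $L^{p'}$), Hölder's inequality shows the segment integrand is in $L^1(\RR)$, so as $R\to\infty$ the first term converges to $\frac1{2\pi i}\int_\RR\frac{f(t+i\eps)}{t+i\eps-z}\,dt$.

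The heart of the argument is that the arc integral vanishes along a suitable sequence $R_n\to\infty$. By Hölder, $\bigl|\int_{C_R}\tfrac{f(\zeta)}{\zeta-z}\,d\zeta\bigr|\le\bigl(\int_{C_R}|f(\zeta)|^p|d\zeta|\bigr)^{1/p}\bigl(\int_{C_R}|\zeta-z|^{-p'}|d\zeta|\bigr)^{1/p'}$. For $R$ large, $|\zeta-z|\ge R/2$ on $C_R$, so the second factor is $O(R^{(1-p')/p'})=O(R^{-1/p})$. For the first factor, observe that the upper half-disc of radius $R$ centred at $\Re z+i\eps$ lies in the strip $\{\eps\le\Im\zeta\le R+\eps\}$, so, passing to polar coordinates about $\Re z+i\eps$, $\int_0^R\bigl(\int_{C_r}|f(\zeta)|^p|d\zeta|\bigr)\,dr\le\int_\eps^{R+\eps}\!\int_\RR|f(x+iy)|^p\,dx\,dy\le MR$. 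Hence $\liminf_{r\to\infty}\int_{C_r}|f|^p|d\zeta|\le M$, and we may pick $R_n\to\infty$ with $\int_{C_{R_n}}|f|^p|d\zeta|\le2M$; along this sequence the first factor stays bounded while the second tends to $0$. Letting $R=R_n\to\infty$ in the Cauchy formula gives $f(z)=\frac1{2\pi i}\int_\RR\frac{f(t+i\eps)}{t+i\eps-z}\,dt$ for every $\eps\in(0,\Im z)$.

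Finally let $\eps\to0^+$. Split the difference of the right-hand sides as $\int_\RR\bigl(f(t+i\eps)-f(t)\bigr)(t+i\eps-z)^{-1}\,dt+\int_\RR f(t)\bigl((t+i\eps-z)^{-1}-(t-z)^{-1}\bigr)\,dt$. Hölder bounds the first piece by $\|f(\cdot+i\eps)-f\|_p$ times the $\eps$-uniformly bounded $L^{p'}$ norm of $(t+i\eps-z)^{-1}$, which tends to $0$ by $L^p$-convergence of the boundary values; the second piece tends to $0$ by dominated convergence in $L^{p'}$, since the kernels converge pointwise and (because $\Im z>0$ keeps the pole off $\RR$) are dominated by a fixed $L^{p'}$ function. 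This yields $f(z)=\frac1{2\pi i}\int_\RR\frac{f(t)}{t-z}\,dt$. The case $\Im z<0$ is handled identically: now $z$ lies \emph{outside} the half-disc for every $\eps>0$, so Cauchy's formula is replaced by Cauchy's theorem, the left-hand side is $0$, and the same limits give $0=\frac1{2\pi i}\int_\RR\frac{f(t)}{t-z}\,dt$.

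The main obstacle is the vanishing of the semicircular contribution: the bound on $\int_{C_r}|f|^p|d\zeta|$ holds only in an averaged sense, forcing the passage along a cleverly chosen sequence of radii, and it is precisely here that $p\in(1,\infty)$ enters — the statement fails for $p=\infty$ (take $f\equiv1$), and $p=1$ needs a separate treatment of the arc. This is, of course, classical; it is Theorem~11.8 in~\cite{duren}.
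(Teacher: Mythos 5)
The paper gives no proof of this statement; it is cited verbatim from Duren's book (Theorem~11.8), so there is no internal argument to compare your proof against. What you supply is a correct, self-contained proof along the standard lines: Cauchy's formula on a half-disc lifted to height $\eps>0$; a H\"older estimate (using $p'<\infty$, hence $p>1$) to pass to the limit on the flat side; the averaging device
\[
\int_0^R\Bigl(\int_{C_r}|f|^p\,|d\zeta|\Bigr)dr \;=\; \iint_{\text{half-disc}}|f|^p\,dA \;\le\; \int_\eps^{R+\eps}\!\!\int_\RR |f(x+iy)|^p\,dx\,dy\;\le\;MR
\]
to extract radii $R_n\to\infty$ along which the arc term tends to zero; and finally the $L^p$-convergence of $f(\cdot+i\eps)$ to the boundary trace, combined with an $L^{p'}$ dominated-convergence argument for the kernel, to send $\eps\to 0^+$. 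All the estimates check out, including the polar/Cartesian comparison for the area integral and the uniform (in $\eps<\Im z$) $L^{p'}$ bound on $(t+i\eps-z)^{-1}$. The genuinely nontrivial point — that the arc integral cannot be killed for \emph{all} large $R$ but only on average, which is what forces the subsequence — is exactly the issue you identify and resolve. Your closing remarks are sound: the statement indeed fails for $p=\infty$ (with $f\equiv 1$), and while the $p=1$ case does not actually fail, the H\"older split as you wrote it degenerates there and one must bound the arc factor by $\sup_{C_R}|\zeta-z|^{-1}$ instead of by an $L^{p'}$ norm.
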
 

\begin{corollary}
\label{cor:branges}
Let $f$ be an analytic function in $\C \setminus (-\infty, 0]$, which is real-valued on $(0, \infty)$, and such that $f(-\xi^2)$ belongs to the Nevanlinna class $\mathscr{N}^+$ in the upper complex half-plane. For $s > 0$ denote by $f(-s)$ the boundary limit $\lim_{t \to 0^+} f(-s + i t)$ (which exists for almost every $s$). Suppose that $\int_0^\infty |f(-s^2)|^p ds < \infty$ for some $p \in (1, \infty)$. Then
\begin{equation}
f(z) = -\frac{1}{\pi}\int_0^\infty \frac{\im f(-s)}{s + z} \,ds = \frac{1}{\pi} \int_0^\infty \frac{\sqrt{z}}{\sqrt{s}} \, \frac{\re f(-s)}{s + z} \, ds , \qquad z \in \C \setminus (-\infty, 0] .
\end{equation}
\end{corollary}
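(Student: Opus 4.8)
The plan is to reduce the statement to Cauchy's integral formula in the Hardy space (Theorem~\ref{thm:branges}) via the substitution $z = -\xi^2$.

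First I would set $g(\xi) = f(-\xi^2)$. The map $\xi \mapsto -\xi^2$ is a biholomorphism of $H_+$ onto $\C \setminus (-\infty, 0]$, with inverse $w \mapsto i\sqrt w$ (principal branch); since $f$ is analytic on $\C \setminus (-\infty, 0]$, the function $g$ is analytic in $H_+$, and by hypothesis $g \in \mathscr{N}^+$. Because $f$ is real on $(0,\infty)$, the Schwarz reflection principle gives $f(\bar w) = \overline{f(w)}$ throughout $\C \setminus (-\infty, 0]$. Hence, for real $t > 0$, the curve $y \mapsto -(-t + iy)^2 = -t^2 + y^2 + 2ity$ tends to $-t^2$ from the upper half-plane and the curve $y \mapsto -(t + iy)^2 = -t^2 + y^2 - 2ity$ tends to $-t^2$ from the lower half-plane, both non-tangentially; writing $f(-s) := \lim_{\eps \to 0^+} f(-s + i\eps)$ as in the statement, I would conclude that the boundary values of $g$ are $g(-t) = f(-t^2)$ and $g(t) = \overline{f(-t^2)}$ for a.e.\ $t > 0$. (That these limits exist for a.e.\ $t$, and that the vertical and non-tangential boundary limits agree, is the standard boundary theory of $\mathscr{N}^+$ --- Lemma~\ref{lm:Np.to.H2} and the references therein --- and it is precisely what makes the notation $f(-s)$ in the statement meaningful.) In particular $|g(t)| = |f(-t^2)|$ for all real $t$, so $\int_\RR |g(t)|^p \, dt = 2\int_0^\infty |f(-s^2)|^p \, ds < \infty$, and Lemma~\ref{lm:Np.to.H2} yields $g \in \mathscr{H}^p$.

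Next I would fix $z \in \C \setminus (-\infty, 0]$ and take $\sqrt z$ to be the principal branch, so that $\re \sqrt z > 0$; then $i\sqrt z \in H_+$, $\im(-i\sqrt z) < 0$, and $-(\pm i\sqrt z)^2 = z$. Applying Theorem~\ref{thm:branges} to $g$ at the points $i\sqrt z$ and $-i\sqrt z$ gives
\[
f(z) = g(i\sqrt z) = \frac{1}{2\pi i}\int_\RR \frac{g(t)}{t - i\sqrt z}\, dt, \qquad 0 = \frac{1}{2\pi i}\int_\RR \frac{g(t)}{t + i\sqrt z}\, dt .
\]
Subtracting these, and using $(t - i\sqrt z)(t + i\sqrt z) = t^2 + z$, gives $f(z) = \frac{\sqrt z}{\pi}\int_\RR \frac{g(t)}{t^2 + z}\, dt$; adding them gives $f(z) = \frac{1}{\pi i}\int_\RR \frac{t\, g(t)}{t^2 + z}\, dt$. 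Both integrals are absolutely convergent because $g \in \leb^p(\RR)$ while the kernels $\frac{1}{t^2+z}$ and $\frac{t}{t^2+z}$ lie in $\leb^{p'}(\RR)$ (here $p' = p/(p-1) \in (1,\infty)$ and $z \notin (-\infty,0]$). Finally I would split each integral over $(0,\infty)$ and $(-\infty,0)$ and substitute $t \mapsto -t$ on the negative half-line. Using $g(t) + g(-t) = 2\re f(-t^2)$ in the first identity and $g(t) - g(-t) = -2i\im f(-t^2)$ in the second gives
\[
f(z) = \frac{2\sqrt z}{\pi}\int_0^\infty \frac{\re f(-t^2)}{t^2 + z}\, dt, \qquad f(z) = -\frac{2}{\pi}\int_0^\infty \frac{t\, \im f(-t^2)}{t^2 + z}\, dt ,
\]
and the substitution $s = t^2$ turns these into the two formulas in the statement.

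The bookkeeping with contours and substitutions is routine; the one place that needs genuine care is the boundary-value step, i.e.\ matching the a.e.-defined boundary limits of $g = f(-\xi^2)$ on $\RR$ with the limits of $f$ along $(-\infty,0)$ from above and below. This is where the Nevanlinna-class hypothesis on $f(-\xi^2)$ is indispensable, and it is also what settles the a.e.\ existence of the boundary values $f(-s)$ appearing in the statement.
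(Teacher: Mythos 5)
Your proof is correct and follows essentially the same route as the paper: set $g(\xi)=f(-\xi^2)$, use Schwarz reflection to identify the boundary values $g(\pm t)$ with $f(-t^2)$ and its conjugate, invoke Lemma~\ref{lm:Np.to.H2} to get $g\in\mathscr{H}^p$, and apply Theorem~\ref{thm:branges} at $\pm i\sqrt z$ before adding and subtracting. The only cosmetic difference is the order in which you combine the two Cauchy identities and substitute boundary values (and your explicit H\"older remark about absolute convergence), which matches the paper's computation step for step.
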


\begin{proof}
We define an auxiliary function $g(\xi) = f((-i \xi)^2)$ in the upper complex half-plane $\im \xi > 0$, and we verify that $g$ satisfies the assumptions of Theorem~\ref{thm:branges}. Since $f(\overline{z}) = \overline{f(z)}$, $g$ has a boundary limit almost everywhere, given by $g(t) = \overline{f(-t^2)}$ and $g(-t) = f(-t^2)$ for $t \geqslant 0$. By assumption, $g$ is in the Nevanlinna class $\mathscr{N}^+$. By Lemma~\ref{lm:Np.to.H2}, $g$ is in the Hardy space $\mathscr{H}^p$. Therefore, by Theorem~\ref{thm:branges}, for $z \in \C \setminus (-\infty, 0]$ we have
\[
f(z) = g(i \sqrt{z}) = \frac{1}{2\pi i} \int_{-\infty}^\infty \frac{g(t)}{t-i \sqrt{z}}dt = \frac{1}{2 \pi i} \int_0^\infty \frac{\overline{f(-t^2)}}{t - i \sqrt{z}} \, dt - \frac{1}{2 \pi i} \int_0^\infty \frac{f(-t^2)}{t + i \sqrt{z}} \, dt ,
\]
and, similarly,
\[
0 = \frac{1}{2\pi i} \int_{-\infty}^\infty \frac{g(t)}{t + i \sqrt{z}}dt = \frac{1}{2 \pi i} \int_0^\infty \frac{\overline{f(-t^2)}}{t + i \sqrt{z}} \, dt - \frac{1}{2 \pi i} \int_0^\infty \frac{f(-t^2)}{t - i \sqrt{z}} \, dt .
\]
Adding the corresponding sides of these identities, we find that
\[
\begin{aligned}
f(z) & = \frac{1}{2 \pi i} \int_0^\infty (\overline{f(-t^2)} - f(-t^2)) \biggl(\frac{1}{t - i \sqrt{z}} + \frac{1}{t + i \sqrt{z}} \biggr) dt \\
& = \frac{1}{2 \pi i} \int_0^\infty (-2 i \im f(-t^2)) \, \frac{2 t}{t^2 + z} \, dt = -\frac{1}{\pi} \int_0^\infty \frac{\im f(-s)}{s + z} \, ds .
\end{aligned}
\]
as desired. Similarly, subtracting the corresponding sides rather than adding them, we obtain
\[
\begin{aligned}
f(z) & = \frac{1}{2 \pi i} \int_0^\infty (\overline{f(-t^2)} + f(-t^2)) \biggl(\frac{1}{t - i \sqrt{z}} - \frac{1}{t + i \sqrt{z}} \biggr) dt \\
& = \frac{1}{2 \pi i} \int_0^\infty (2 \re f(-t^2)) \, \frac{2 i \sqrt{z}}{t^2 + z} \, dt = \frac{1}{\pi} \int_0^\infty \frac{\sqrt{z}}{\sqrt{s}} \, \frac{\re f(-s)}{s + z} \, ds ,
\end{aligned}
\]
as desired.
\end{proof}


\subsection{Auxiliary lemmas}

We need the following simple corollary of the residue theorem. 

\begin{lemma}[\cite{specHL}, p.~11, Lemma~2.5]
\label{lm:rotate}
Let $f$ be an analytic function in the sector $-\epsilon<\Arg(z)<b+\epsilon$ for some $\epsilon>0$ and $b>0$, except for a finite number of poles at points $z=z_j$ lying in the sector $0<\Arg(z)<b$. Assume also that for some $\delta>0$ we have $f(z)=O(|z|^{-1+\delta})$ as $|z|\rightarrow 0^+$ and $f(z)=O(|z|^{-1-\delta})$ as $|z|\rightarrow +\infty$, uniformly in the sector $0\leqslant \Arg(z)\leqslant b$. Then
\[
\int_0^\infty f(z)dz = e^{ib}\int_0^\infty f(e^{ib}z)dz+2\pi i\sum_j\Res(f(z_j)).
\]
\end{lemma}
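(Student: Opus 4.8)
The plan is to apply the residue theorem on a sector-shaped contour and then send the inner radius to $0$ and the outer radius to $\infty$, the growth hypotheses on $f$ being exactly what is needed to discard the two circular arcs.

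First I would fix $0<\rho<R$ and take $\gamma_{\rho,R}$ to be the positively oriented boundary of the truncated sector $\{z:\rho<|z|<R,\ 0<\Arg z<b\}$, consisting of: the segment of the positive real axis from $\rho$ to $R$; the outer arc $A_R$ of radius $R$ from argument $0$ to argument $b$; the segment of the ray $\Arg z=b$ from $Re^{ib}$ back to $\rho e^{ib}$; and the inner arc $A_\rho$ of radius $\rho$ from argument $b$ back to argument $0$. Since $f$ is analytic in the slightly larger sector $-\epsilon<\Arg z<b+\epsilon$ except at the finitely many poles $z_j$, all of which lie in the open sector $0<\Arg z<b$, for $\rho$ small enough and $R$ large enough every $z_j$ is enclosed by $\gamma_{\rho,R}$ and none lies on it, so the residue theorem gives
\[
\int_{\gamma_{\rho,R}} f(z)\,dz = 2\pi i\sum_j \Res(f(z_j)).
\]
Parametrizing the ray $\Arg z=b$ by $z=e^{ib}w$ with $w\in(0,\infty)$, the four pieces contribute, respectively, $\int_\rho^R f(w)\,dw$, the integral over $A_R$, $-e^{ib}\int_\rho^R f(e^{ib}w)\,dw$, and the integral over $A_\rho$.

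Next I would estimate the arcs. On $A_R$ the hypothesis $f(z)=O(|z|^{-1-\delta})$, uniform over $0\le\Arg z\le b$, together with the length $bR$ of $A_R$, gives a bound $O(R^{-\delta})$ for that integral, which tends to $0$ as $R\to\infty$; likewise on $A_\rho$ the bound $f(z)=O(|z|^{-1+\delta})$ and the length $b\rho$ give a bound $O(\rho^{\delta})$, which tends to $0$ as $\rho\to 0^+$. The same two bounds show that $t\mapsto f(t)$ and $t\mapsto f(e^{ib}t)$ are absolutely integrable on $(0,\infty)$ (the integrand is $O(t^{-1+\delta})$ near $0$ and $O(t^{-1-\delta})$ near $\infty$), so $\int_\rho^R f(w)\,dw\to\int_0^\infty f(z)\,dz$ and $\int_\rho^R f(e^{ib}w)\,dw\to\int_0^\infty f(e^{ib}z)\,dz$. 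Letting $\rho\to 0^+$ and $R\to\infty$ in the residue identity yields the asserted formula.

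There is no real obstacle here; the only points that require attention are bookkeeping ones. The $O$-estimates must be used in their uniform form over the closed sector $0\le\Arg z\le b$, which is precisely what makes the arc bounds legitimate; and the enlargement of the sector of analyticity to $-\epsilon<\Arg z<b+\epsilon$ is what guarantees that $f$ has no singularities on the two bounding rays, hence that all the boundary integrals above are genuine absolutely convergent integrals. Finally, finiteness of the pole set makes the residue sum well defined and the choice of $\rho,R$ enclosing every $z_j$ unproblematic.
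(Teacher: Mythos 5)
Your proof is correct and is the standard truncated-sector residue-theorem argument; the paper itself does not reprove this lemma but simply cites it from~\cite{specHL}, where the argument is precisely the one you give (close the contour along the two rays and two arcs, kill the arcs with the uniform $O(|z|^{-1+\delta})$ and $O(|z|^{-1-\delta})$ bounds, then let $\rho\to 0^+$ and $R\to\infty$). Your bookkeeping of orientation, the parametrization $z=e^{ib}w$, and the absolute convergence of the two ray integrals are all handled correctly, so there is nothing to add.
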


We will need the following technical estimate.

\begin{lemma}
\label{lm:square:Kfg0}
Suppose that $\alpha \in (1, 2)$ and $h: (0,\infty)\rightarrow \RR$ satisfies $|h(r)|\leqslant c_1 \min \{ 1, r^{-1}\}$ and $|h^\prime(r)|\leqslant c_2\min \{1, r^{-2}\}$ if $r > 0$. Let
\[
K(s)=\pvint_0^\infty \frac{h(r)}{r^\alpha-s^\alpha} \, dr , \qquad s > 0.
\]
Then there is a constant $C$ (which depends only on $\alpha$, $c_1$ and $c_2$) such that
\begin{equation}
|K(s)|\leqslant \begin{cases} C s^{1 - \alpha} & \text{if $0 < s < 1$,} \\ C s^{-\alpha} \log(1 + s) & \text{if $s \geqslant 1$.} \end{cases}
\end{equation}
\end{lemma}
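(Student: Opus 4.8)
The plan is to estimate the principal-value integral by splitting the range of $r$ into the region near the singularity $r = s$ and the region away from it, and to treat the small-$s$ and large-$s$ cases separately. The key observation is that the singularity of $1/(r^\alpha - s^\alpha)$ at $r = s$ is a simple pole, and near $r = s$ we may write $r^\alpha - s^\alpha = (r - s) \cdot \alpha \xi^{\alpha-1}$ for some $\xi$ between $r$ and $s$ by the mean value theorem; thus the principal value is governed by the regularity of $h$ together with the behaviour of the smooth factor. First I would fix a neighbourhood, say $I = (s/2, 2s)$, and write $K(s) = K_1(s) + K_2(s)$, where $K_1$ is the integral over $I$ and $K_2$ the integral over $(0,\infty) \setminus I$; only $K_1$ carries the principal value.

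For the near-diagonal term $K_1(s)$, I would exploit the standard cancellation in a principal-value integral of a simple pole: write
\[
K_1(s) = \pvint_{s/2}^{2s} \frac{h(r)}{r^\alpha - s^\alpha} \, dr = \int_{s/2}^{2s} \frac{h(r) - h(s)}{r^\alpha - s^\alpha} \, dr + h(s) \, \pvint_{s/2}^{2s} \frac{dr}{r^\alpha - s^\alpha} \, .
\]
In the first integral, $|h(r) - h(s)| \le \|h'\|_\infty |r - s|$ on $I$ with the appropriate local bound on $h'$, while $|r^\alpha - s^\alpha| \ge c \, s^{\alpha-1} |r - s|$ on $I$ (since the derivative $\alpha \xi^{\alpha-1}$ is comparable to $s^{\alpha-1}$ there), so the integrand is bounded by $C s^{1-\alpha} \min\{1, s^{-2}\}$ and integrating over an interval of length $\tfrac{3}{2}s$ gives a contribution of order $s^{2-\alpha} \min\{1, s^{-2}\}$, which is $\le C s^{2-\alpha}$ for $s < 1$ and $\le C s^{-\alpha}$ for $s \ge 1$. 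The remaining principal-value integral $\pvint_{s/2}^{2s} dr/(r^\alpha - s^\alpha)$ can be computed or estimated directly — substituting $r = s u$ reduces it to $s^{1-\alpha} \pvint_{1/2}^{2} du/(u^\alpha - 1)$, a finite absolute constant — so this term contributes $|h(s)| \cdot C s^{1-\alpha} \le C s^{1-\alpha} \min\{1, s^{-1}\}$, i.e. $C s^{1-\alpha}$ for $s<1$ and $C s^{-\alpha}$ for $s \ge 1$.

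For the far term $K_2(s)$, there is no singularity and one estimates crudely: on $(0, s/2)$ one has $|r^\alpha - s^\alpha| \ge c\, s^\alpha$, contributing $\int_0^{s/2} |h(r)| \, s^{-\alpha} dr \le C s^{-\alpha} \int_0^{s/2} \min\{1, r^{-1}\} dr$, which is $\le C s^{1-\alpha}$ when $s \le 1$ and $\le C s^{-\alpha}\log(1+s)$ when $s \ge 1$ (the logarithm coming precisely from $\int_1^{s/2} r^{-1} dr$). On $(2s, \infty)$ one uses $|h(r)| \le c_1/r$ and $|r^\alpha - s^\alpha| \ge c\, r^\alpha$, so the tail is bounded by $C \int_{2s}^\infty r^{-1-\alpha} dr = C s^{-\alpha}$, which is dominated by both claimed bounds. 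Collecting the pieces yields the assertion. The main obstacle — and the only place requiring genuine care — is the near-diagonal principal-value estimate: one must correctly handle the $\min\{1, r^{-2}\}$ form of the bound on $h'$ when the neighbourhood $I$ straddles $r = 1$ (e.g. for $s$ near $1$), and make sure the comparability $|r^\alpha - s^\alpha| \asymp s^{\alpha-1}|r-s|$ on $I$ is uniform in $s$; both are handled by the substitution $r = su$, which renders all constants independent of $s$. The appearance of the logarithm for $s \ge 1$ is a genuine feature coming from the slow $r^{-1}$ decay of $h$ integrated against the $s^{-\alpha}$ factor over the growing interval $(1, s)$.
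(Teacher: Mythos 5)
Your decomposition is genuinely different from the paper's: you split at both $s/2$ and $2s$ into three regions, whereas the paper uses only the cut at $2s$ and handles the entire interval $(0,2s)$ with the $h(r)-h(s)$ subtraction. As a consequence the logarithm for $s\ge 1$ arises in different places: in the paper it comes from the integral of $\bigl(h(r)-h(s)\bigr)/(r^\alpha-s^\alpha)$ over $(0,2s)$, via the bound $\bigl\lvert h(r)-h(s)\bigr\rvert \le 2c_2\lvert r-s\rvert/\bigl((1+r)(1+s)\bigr)$ and the elementary inequality $(t-1)/(t^\alpha-1)\le 1$; in your version it comes from the regular piece over $(0,s/2)$, from $\int_1^{s/2} r^{-1}\,dr$. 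Both mechanisms are sound, and your route is arguably closer to the standard ``split into singular/regular'' template, at the cost of one extra region to track.

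There is, however, one small but genuine slip in the tail estimate on $(2s,\infty)$. You write ``one uses $|h(r)|\le c_1/r$'' and conclude the tail is bounded by $C\int_{2s}^\infty r^{-1-\alpha}\,dr = Cs^{-\alpha}$, ``dominated by both claimed bounds.'' This is false for small $s$: when $s<1$ the target bound is $Cs^{1-\alpha}$, and $s^{-\alpha}$ is \emph{larger}, not smaller. The culprit is that $|h(r)|\le c_1/r$ overstates the size of $h$ for $r<1$ (where the hypothesis only gives $|h(r)|\le c_1$), and if $2s<1$ part of the tail lies there. The fix is exactly what the paper does, namely to keep $|h(r)|\le c_1\min\{1,r^{-1}\}$ and split into cases: for $2s\ge 1$, $\int_{2s}^\infty r^{-1-\alpha}\,dr \asymp s^{-\alpha}$; for $2s<1$, $\int_{2s}^\infty \min\{1,r^{-1}\}\,r^{-\alpha}\,dr \le \int_{2s}^\infty r^{-\alpha}\,dr = \frac{(2s)^{1-\alpha}}{\alpha-1} \asymp s^{1-\alpha}$. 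With that correction your near-diagonal and $(0,s/2)$ estimates are right (including the uniformity of $|r^\alpha-s^\alpha|\asymp s^{\alpha-1}|r-s|$ on $(s/2,2s)$ via $r=su$, and the handling of $\min\{1,r^{-2}\}$ there through $r\ge s/2$), and the proof goes through.
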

\begin{proof}
Fix $s > 0$. Since $|r^\alpha - s^\alpha| \ge r^\alpha - (\tfrac{1}{2} r)^\alpha \ge \tfrac{1}{2} r^\alpha + s^\alpha$ if $r \ge 2 s$, we have
\begin{equation}\label{eq:k:aux} |K(s)|\leqslant \biggl| \int_0^{2s} \frac{h(r) - h(s)}{r^\alpha-s^\alpha} \, dr \biggr| + \biggl| h(s) \pvint_0^{2s} \frac{1}{r^\alpha-s^\alpha} \, dr \biggr| + 2 \int_{2s}^\infty \frac{|h(r)|}{r^\alpha} \, dr . \end{equation}
The third integral in the right-hand side is easy to estimate: if $2 s \geqslant 1$, we have
\begin{equation}
\label{eq:Kfg0:1}
\int_{2s}^\infty \frac{|h(r)|}{r^\alpha} \, dr \le c_1 \int_{2s}^\infty \frac{1}{r^{\alpha + 1}} \, dr = \frac{c_1}{\alpha (2 s)^{\alpha}} \, ,
\end{equation}
while if $0 < 2 s < 1$,
\begin{equation}
\label{eq:Kfg0:2}
\int_{2s}^\infty \frac{|h(r)|}{r^\alpha} \, dr \le c_1 \int_{2s}^\infty \frac{1}{r^\alpha} \, dr = \frac{c_1}{(\alpha - 1) (2 s)^{\alpha - 1}} \, .
\end{equation}
The middle integral in~\eqref{eq:k:aux} also shows no difficulties:
\begin{equation}
\label{eq:Kfg0:3}
\biggl| h(s) \pvint_0^{2s} \frac{1}{r^\alpha-s^\alpha} \, dr \biggr| \le c_1 \min\{1, s^{-1}\} \times s^{1 - \alpha} \biggl| \pvint_0^2 \frac{1}{t^\alpha-1} \, dt \biggr| .
\end{equation}
The estimate of the first integral in~\eqref{eq:k:aux} requires more work. Since $|h'(t)| \le c_2 \min\{1, t^{-2}\} \le 2 c_2 (1 + t)^{-2}$, we have
\[
|h(r) - h(s)| = \biggl| \int_s^r h'(t) dt \biggr| \leqslant 2 c_2 \biggl| \frac{1}{1 + s} - \frac{1}{1 + r} \biggr| = \frac{2 c_2 |r - s|}{(1 + r) (1 + s)} \, .
\]
Therefore,
\[
\biggl| \int_0^{2s} \frac{h(r) - h(s)}{r^\alpha-s^\alpha} \, dr \biggr| \le 2 c_2 \int_0^{2s} \frac{r - s}{r^\alpha-s^\alpha} \, \frac{1}{(1 + r) (1 + s)} \, dr .
\]
Since $(t - 1) / (t^\alpha - 1) \le 1$ for all $t > 0$, we have
\[
\frac{r - s}{r^\alpha - s^\alpha} = \frac{1}{s^{\alpha - 1}} \, \frac{(r / s) - 1}{(r / s)^\alpha - 1} \le \frac{1}{s^{\alpha - 1}} \, .
\]
It follows that
\[
\biggl| \int_0^{2s} \frac{h(r) - h(s)}{r^\alpha-s^\alpha} \, dr \biggr| \le \frac{2 c_2}{s^{\alpha - 1}} \int_0^{2s} \frac{1}{(1 + r) (1 + s)} \, dr = \frac{2 c_2 \log(1 + 2 s)}{s^{\alpha - 1} (1 + s)} \, .
\]
Combining the above estimates, we conclude that
\[
K(s) \le \frac{C_1 \log(1 + 2 s)}{s^{\alpha - 1} (1 + s)} + C_2 s^{1 - \alpha} \min\{1, s^{-1}\} + C_3 \min\{s^{-\alpha}, s^{1 - \alpha}\}
\]
for some constants $C_1$, $C_2$, $C_3$. The desired result follows.
\end{proof}

The following identity is quite elementary.

\begin{lemma}[\cite{symzero1}, equation~(4.1)]
\label{lm:complex}
For every $a,b,c\in\CC$, $\Im c\neq 0$, we have
\[
\Im\left( \frac{ab}{c}\right)=\frac{\Im a\Im b}{\Im c}+\frac{\Im(a/c)\Im(b/c)}{\Im(1/c)}.
\]
\end{lemma}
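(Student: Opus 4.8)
The plan is to treat this as a purely algebraic identity in $a$, $b$, $c$, with no analysis involved. Write $c = p + iq$ with $q = \Im c \neq 0$. Then $1/c = (p-iq)/(p^2+q^2)$, so $\Im(1/c) = -q/(p^2+q^2) \neq 0$; in particular every term on the right-hand side is meaningful, and $1/\Im(1/c) = -(p^2+q^2)/q$. I will also use repeatedly the elementary product rule $\Im(wc) = q\,\Re w + p\,\Im w$, valid for every $w \in \CC$.

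The cleanest way to organise the computation is to substitute $u = a/c$ and $v = b/c$, so that $a = uc$, $b = vc$, and $ab/c = uvc$; the asserted identity then becomes
\[
\Im(uvc) = \frac{\Im(uc)\,\Im(vc)}{\Im c} + \frac{\Im u \, \Im v}{\Im(1/c)} .
\]
Expanding the first quotient with the product rule gives $q\,\Re u\,\Re v + p(\Re u\,\Im v + \Im u\,\Re v) + (p^2/q)\,\Im u\,\Im v$, while the second term equals $-(p^2/q)\,\Im u\,\Im v - q\,\Im u\,\Im v$. Adding these, the terms $(p^2/q)\,\Im u\,\Im v$ cancel and the right-hand side collapses to $q(\Re u\,\Re v - \Im u\,\Im v) + p(\Re u\,\Im v + \Im u\,\Re v)$, which is precisely $\Im(uvc) = q\,\Re(uv) + p\,\Im(uv)$. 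This finishes the proof.

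An essentially equivalent route avoids the substitution: since complex multiplication is $\RR$-bilinear and $\Im$ is $\RR$-linear, the difference of the two sides of the identity, viewed as a function of $(a,b)$ for fixed $c$, is a symmetric $\RR$-bilinear form on $\CC \cong \RR^2$, so it vanishes identically as soon as it vanishes on the pairs $(1,1)$, $(1,i)$, $(i,i)$. The first two cases are immediate (both sides reduce to $\Im(1/c)$, respectively to $\Im(i/c) = \Re(1/c)$, with the remaining terms absent), and the case $(i,i)$ reduces, after clearing denominators, to the identity $(\Re c)^2 + (\Im c)^2 = |c|^2$. There is no genuine obstacle in either approach — the content is pure bookkeeping — and the one point deserving a word of care is that division by $\Im(1/c)$ is legitimate, which is exactly the hypothesis $\Im c \neq 0$ in disguise.
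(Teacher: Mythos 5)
Your proof is correct. Note that the paper itself gives no proof of this lemma; it simply cites it as equation (4.1) of \cite{symzero1} and remarks that it is ``quite elementary.'' So there is no paper-internal argument to compare against. Your first computation is clean and checks out: after the substitution $u=a/c$, $v=b/c$, the identity reduces to $\Im(uvc) = \Im(uc)\Im(vc)/\Im c + \Im u\,\Im v/\Im(1/c)$, and expanding with $\Im(wc) = q\Re w + p\Im w$ and $1/\Im(1/c) = -(p^2+q^2)/q$ does make the $(p^2/q)\Im u\,\Im v$ terms cancel and the rest reassemble into $q\Re(uv)+p\Im(uv)$. Your second, bilinearity-based route is also sound --- the difference of the two sides is a symmetric $\RR$-bilinear form in $(a,b)$, and the three basis checks $(1,1)$, $(1,i)$, $(i,i)$ do verify (the last one reducing to $\Im(1/c)/\Im c = -|1/c|^2$, i.e.\ $|c|^2 = (\Re c)^2+(\Im c)^2$). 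Either argument is a perfectly adequate replacement for the missing proof; the first is more direct, the second is arguably more illuminating since it explains why only three scalar checks are needed.
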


We need one more technical result.

\begin{lemma}
\label{lem:aux:int}
Let $t > 0$ and
\[
 \Phi(z) = \int_0^\infty e^{-s^\alpha t} \, \frac{1 - e^{-s z}}{s} \, ds
\]
for $z \in \C$. Then $\Phi$ is an entire function, and for every $\epsilon > 0$ there is $C > 0$ such that
\[
 \biggl| \int_0^\infty e^{-s^\alpha t} \, \frac{e^{-s z_2} - e^{-s z_1}}{s} \, ds \biggr| = |\Phi(z_1) - \Phi(z_2)| \le C
\]
whenever $|z_1| = |z_2|$ and both $z_1$ and $z_2$ are in the sector $|\Arg z| \le \tfrac{\pi}{2} + \tfrac{\pi}{2 \alpha} - \epsilon$.
\end{lemma}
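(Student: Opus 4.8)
The plan is to reduce the whole statement to a pointwise bound on the derivative $\Phi'$. First, writing $\bigl|\tfrac{1-e^{-sz}}{s}\bigr| = \bigl|\int_0^1 z e^{-s\theta z}\,d\theta\bigr| \le |z| e^{s|z|}$, one checks that on $|z| \le M_0$ the integrand defining $\Phi$ is dominated by $M_0 e^{sM_0} e^{-s^\alpha t}$, which is integrable over $(0,\infty)$ since $\alpha > 1$. Hence, by Morera's theorem, $\Phi$ is entire, and — differentiating under the integral sign, with the analogous majorant $e^{sM_0 - s^\alpha t}$ for the $z$-derivative of the integrand — one gets $\Phi'(z) = \int_0^\infty e^{-s^\alpha t - sz}\,ds$, an absolutely convergent integral for every $z \in \C$.

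The heart of the argument is the estimate
\[
 |\Phi'(z)| \le \frac{C_0}{|z|}, \qquad |z| \ge 1, \quad |\Arg z| \le \tfrac{\pi}{2} + \tfrac{\pi}{2\alpha} - \epsilon ,
\]
with $C_0$ depending only on $\alpha$, $\epsilon$, $t$. Since $\Phi'(\bar z) = \overline{\Phi'(z)}$, it suffices to treat $z = R e^{i\phi}$ with $R \ge 1$ and $0 \le \phi \le \tfrac{\pi}{2} + \tfrac{\pi}{2\alpha} - \epsilon$, and one may assume $\epsilon < \pi/\alpha$. I would fix $\gamma = \tfrac{\pi}{2\alpha} - \tfrac{\epsilon}{2}$, so that $0 < \gamma < \tfrac{\pi}{2\alpha}$, $\alpha\gamma < \tfrac{\pi}{2}$, and $|\phi - \gamma| \le \tfrac{\pi}{2} - \tfrac{\epsilon}{2}$. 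The function $e^{-s^\alpha t - sz}$ is entire in $s$; on the arc $|s| = \rho$, $-\gamma \le \arg s \le 0$ it is bounded by $e^{-t\rho^\alpha \cos(\alpha\gamma) + \rho R}$ (using $\cos(\alpha\arg s) \ge \cos(\alpha\gamma) > 0$), which tends to $0$ as $\rho \to \infty$ because $\alpha > 1$, and the arc near $s = 0$ is harmless. Hence Cauchy's theorem permits rotating the contour to the ray $s = r e^{-i\gamma}$, giving
\[
 \Phi'(z) = e^{-i\gamma} \int_0^\infty e^{-r^\alpha e^{-i\alpha\gamma} t - r R e^{i(\phi - \gamma)}}\,dr .
\]
Taking absolute values and using $\cos(\alpha\gamma) > 0$, $|\phi - \gamma| < \tfrac{\pi}{2}$,
\[
 |\Phi'(z)| \le \int_0^\infty e^{-r^\alpha t \cos(\alpha\gamma) - r R \cos(\phi - \gamma)}\,dr \le \int_0^\infty e^{-r R \cos(\phi - \gamma)}\,dr = \frac{1}{R\cos(\phi - \gamma)} ,
\]
and $\cos(\phi - \gamma) \ge \min\{\sin(\epsilon/2), \cos\gamma\} > 0$ depends only on $\alpha$ and $\epsilon$. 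This contour rotation is the only delicate point: it uses exactly the room afforded by the super-linear power $s^\alpha$ — one may rotate by nearly $\tfrac{\pi}{2\alpha}$ before $\cos(\alpha\gamma)$ degenerates — which is precisely the extra opening $\tfrac{\pi}{2\alpha}$ of the sector beyond the half-plane, with $\epsilon$ providing the slack that keeps $\cos(\alpha\gamma)$ and $\cos(\phi-\gamma)$ bounded away from zero.

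Finally, given $z_1, z_2$ with $|z_1| = |z_2| = R$ in the sector $|\Arg z| \le M$, where $M \mdef \tfrac{\pi}{2} + \tfrac{\pi}{2\alpha} - \epsilon$, I would join them by the circular arc $w(\phi) = R e^{i\phi}$, with $\phi$ ranging over the shorter interval between $\Arg z_2$ and $\Arg z_1$; this arc lies in the sector since $M < \pi$. As $\Phi$ is entire,
\[
 |\Phi(z_1) - \Phi(z_2)| = \biggl| \int \Phi'(w)\,dw \biggr| \le R \int_{-M}^{M} |\Phi'(R e^{i\phi})|\,d\phi .
\]
If $R \ge 1$, the integrand is at most $C_0/R$ by the key estimate, so the right-hand side is at most $2MC_0$; if $R \le 1$, then $\Phi'$ is continuous, hence bounded by some $C_1$ on the closed unit disk, and the right-hand side is at most $2MRC_1 \le 2MC_1$. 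Taking $C = 2M\max\{C_0, C_1\}$ completes the proof.
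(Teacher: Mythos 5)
Your proof is correct and follows essentially the same approach as the paper: express $\Phi$ as a primitive of $\Psi(z)=\int_0^\infty e^{-s^\alpha t - sz}\,ds$, establish the decay $|\Psi(z)|\leqslant C\min\{1,|z|^{-1}\}$ in the sector, and integrate $\Psi$ over the circular arc joining $z_1$ to $z_2$. The only difference is that the paper obtains the decay estimate for $\Psi$ by citing Lemma~2.14 of \cite{specHL}, whereas you derive it directly by rotating the contour of integration in the $s$-variable to $\arg s = -\gamma$ with $\gamma=\tfrac{\pi}{2\alpha}-\tfrac{\epsilon}{2}$ — a self-contained version of the same mechanism.
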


\begin{proof}
By Lemma~2.14 in~\cite{specHL} applied to the function $\exp(-s^\alpha t)$, the function
\[
 \Psi(z) = \int_0^\infty e^{-s^\alpha t} e^{-s z} ds
\]
is entire, and for every $\epsilon > 0$ there is $C > 0$ such that if $|\Arg z| \le \tfrac{\pi}{2} + \tfrac{\pi}{2 \alpha} - \epsilon$, then $|\Psi(z)| \le C \min\{1, |z|^{-1}\}$. Integrating $\Psi$ over $[0, z]$ and using Fubini's theorem, we find that
\[
 \int_{[0,z]} \Psi(w) dw = \int_0^\infty e^{-s^\alpha t} \int_{[0,z]} e^{-s w} dw ds = \int_0^\infty e^{-s^\alpha t} \, \frac{e^{-s z} - 1}{-s} \, ds = \Phi(z) ;
\]
the use of Fubini's theorem is justified by the estimate $|e^{-s^\alpha t} e^{-s w}| \le e^{-s^\alpha t} e^{s |z|}$. Therefore, $\Phi$ is indeed an entire function.

Suppose that $\eps > 0$, $r > 0$ and that $z_1 = r e^{i s_1}$ and $z_2 = r e^{i s_2}$ are in the sector $|\Arg z| \le \tfrac{\pi}{2} + \tfrac{\pi}{2 \alpha} - \epsilon$. Integrating over the arc $\Gamma$ of the circle $|z| = r$ with endpoints $z_1$ and $z_2$, we find that
\[
 |\Phi(z_2) - \Phi(z_1)| = \biggl| \int_\Gamma \Psi(w) dw \biggr| \le \int_{s_1}^{s_2} |\Psi(r e^{i s})| r ds \le 2 \pi r \times C \min\{1, r^{-1}\} , 
\]
which completes the proof.
\end{proof}

\section{Properties of the function $\Gpos$}
\label{sec:g}

Recall that the characteristic exponent of $X_t$ is given by $\psi(\xi) = (e^{-i \theta} \xi)^\alpha$ when $\xi > 0$ and $\psi(\xi) = (-e^{i \theta} \xi)^\alpha$ when $\xi < 0$, and these expressions extend analytically to $\re \xi > 0$ and $\re \xi < 0$, respectively.

\begin{proof}[Proof of Proposition~\ref{pr1}]
In terms of the characteristic exponent $\psi$, we need to prove that
\begin{equation}
\label{eq:fgpos}
\fourier \Gpos(\xi) = \sin \tfrac{\pi}{\alpha} \biggl(\frac{\alpha}{\psi(\xi)-1}-\frac{1}{e^{-i \theta} \xi - 1} + \frac{1}{e^{i \theta} \xi + 1} \biggr), \qquad \xi\in\RR .
\end{equation}
One way to prove the above identity is to simply evaluate the left-hand side using the definition~\eqref{eq:gdef} of $\Gpos$. We take a different approach: we apply the inverse Fourier transform to the right-hand side of~\eqref{eq:fgpos} and in this way we derive~\eqref{eq:gdef}.

We denote the right-hand side of~\eqref{eq:fgpos} by $\Phi(i \xi)$. Note that $|\Phi(i \xi)| \le C \min\{1, |\xi|^{-1}\}$. In particular, $\Phi(i \xi)$ is square integrable, and hence $\Phi(i \xi)$ indeed is the Fourier transform of a function $G \in \leb^2(\RR)$. Observe that $\Phi(-i \xi) = \overline{\Phi(i \xi)}$ for $\xi \in \RR$, and hence $G$ is real-valued.

By~\eqref{eq:fgpos}, $\Phi$ extends to an analytic function in the upper complex half-plane, continuous on the boundary, given by the formula
\[
\Phi(\xi) = \sin \tfrac{\pi}{\alpha} \biggl( \frac{\alpha}{(-i e^{-i \theta} \xi)^\alpha - 1} - \frac{e^{i \theta}}{-i \xi - e^{i \theta}} + \frac{e^{-i \theta}}{-i \xi + e^{-i \theta}} \biggr) , \qquad \im \xi \geqslant 0 .
\]
Indeed: the poles of $\alpha / (\psi(\xi) - 1)$ are cancelled by the other two terms. Furthermore, the estimate $|\Phi(\xi)| \le C \min\{1, |\xi|^{-1}\}$ holds in the upper complex half-plane, so that $\Phi$ is in the Hardy space $\mathscr{H}^2$ in the upper complex half-plane. Hence, by Theorem~\ref{thm:branges}, for $\xi > 0$ we have 
\begin{equation}\label{eq:pr1:aux1}
\fourier G(\xi) = \Phi(i \xi) = \frac{1}{2 \pi i} \int_{-\infty}^\infty \frac{\Phi(t)}{t - i \xi} \, dt , \qquad 0 = \frac{1}{2 \pi i} \int_{-\infty}^\infty \frac{\Phi(t)}{t + i \xi} \, dt .
\end{equation}
It follows that
\begin{equation}\label{eq:pr1:aux2}\begin{aligned}
\fourier G(\xi) & = \Phi(i \xi) + \overline{0} = \frac{1}{2 \pi i} \int_{-\infty}^\infty \frac{\Phi(t) - \overline{\Phi(t)}}{t - i \xi} \, dt \\
 & = \frac{1}{\pi} \int_{-\infty}^\infty \frac{\im \Phi(t)}{t - i \xi} \, dt = -\frac{1}{\pi} \int_0^\infty \frac{\im \Phi(-t)}{t + i \xi} \, dt + \frac{1}{\pi} \int_0^\infty \frac{\im \Phi(t)}{t - i \xi} \, dt
\end{aligned}\end{equation}
(here we added the sides of the first equality in~\eqref{eq:pr1:aux1} and complex conjugates of the corresponding sides of the other equality in~\eqref{eq:pr1:aux1}). By a straightforward calculation, for $t > 0$ we have
\[\begin{aligned}
\im \Phi(t) & = \sin \tfrac{\pi}{\alpha} \im \biggl( \frac{\alpha}{e^{-i \alpha (\pi/2 + \theta)} t^\alpha - 1} - \frac{1}{e^{-i (\pi/2 + \theta)} t - 1} - \frac{1}{e^{i (\pi/2 + \theta)} t - 1} \biggr) \\
& = \sin \tfrac{\pi}{\alpha} \times \frac{\alpha t^\alpha \sin(\alpha (\tfrac{\pi}{2} + \theta))}{t^{2 \alpha} - 2 t^\alpha \cos(\alpha (\tfrac{\pi}{2} + \theta)) + 1} \, .
\end{aligned}\]
Similarly, for $t > 0$,
\[
\im \Phi(-t) = -\sin \tfrac{\pi}{\alpha} \times \frac{\alpha t^\alpha \sin(\alpha (\tfrac{\pi}{2} - \theta))}{t^{2 \alpha} - 2 t^\alpha \cos(\alpha (\tfrac{\pi}{2} - \theta)) + 1} \, .
\]
In particular, $\im \Phi(t)$ is integrable over $t \in \RR$.

Observe that for $t > 0$, the functions $\xi \mapsto 1 / (t - i \xi)$ and $\xi \mapsto 1 / (t + i \xi)$ are Fourier transforms of $x \mapsto e^{t x} \chi_{(-\infty, 0)}(x)$ and $x \mapsto e^{-t x} \chi_{(0, \infty)}(x)$, respectively. By Fubini's theorem, \eqref{eq:gdef} and~\eqref{eq:pr1:aux2}, the Fourier transform of the function
\[
 \Gpos(x) = -\frac{1}{\pi} \int_0^\infty \im \Phi(-t) e^{-t x} \chi_{(0, \infty)}(x) dt + \frac{1}{\pi} \int_0^\infty \im \Phi(t) e^{t x} \chi_{(-\infty, 0)}(x) dt
\]
coincides with $\fourier G$ on $(0, \infty)$. Since both $\Gpos$ and $G$ are real-valued, we conclude that $\fourier \Gpos = \fourier G$ on $\RR$, and consequently $\Gpos$ and $G$ are equal almost everywhere.
\end{proof}

We will later see that
\[
 \Gpos(0^+) = \sin(\theta + \tfrac{\pi}{\alpha} - \tfrac{\pi}{2}) , \qquad \Gpos(0^-) = \sin(-\theta + \tfrac{\pi}{\alpha} - \tfrac{\pi}{2}) .
\]
In particular,
\[
 \Gpos(0^+) - \Gpos(0^-) = 2 \sin \tfrac{\pi}{\alpha} \sin \theta .
\]
We will need the following regularity result.

\begin{lemma}\label{lm:Hold}
The function $\Gpos$ is H\"older continuous with exponent $\alpha - 1$, save for a jump at $x = 0$. More precisely, the function
\[
g(x)=\Gpos(x) \chi_{\RR \setminus \{0\}}(x) - \Gpos(0^+)\chi_{(0,\infty)} - \Gpos(0^-)\chi_{(-\infty,0)}
\]
is H\"older continuous: there exists a constant $C$ such that
\begin{equation}
|g(x)-g(y)|\leqslant C|x-y|^{\alpha-1} , \qquad x, y \in \RR .
\end{equation}
\end{lemma}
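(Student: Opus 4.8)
The plan is to work directly from the integral representation~\eqref{eq:gdef}. For $x>0$ write $\Gpos(x)=\int_0^\infty\mu_+(t)e^{-tx}\,dt$, where
\[
\mu_+(t)=\frac{\alpha\sin\tfrac{\pi}{\alpha}}{\pi}\cdot\frac{t^\alpha\sin(\alpha(\tfrac{\pi}{2}-\theta))}{t^{2\alpha}-2t^\alpha\cos(\alpha(\tfrac{\pi}{2}-\theta))+1}\,,
\]
and for $x<0$ write $\Gpos(x)=\int_0^\infty\mu_-(t)e^{tx}\,dt$, where $\mu_-$ is obtained from $\mu_+$ by replacing $\theta$ with $-\theta$. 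The first step is the pointwise bound $|\mu_\pm(t)|\le C\min\{1,t^{-\alpha}\}$ for $t>0$. Indeed, since $|\theta|\le\tfrac{\pi}{\alpha}-\tfrac{\pi}{2}$ and $\alpha\in(1,2)$, the angle $\alpha(\tfrac{\pi}{2}\mp\theta)$ lies in $[(\alpha-1)\pi,\pi]\subseteq(0,\pi]$, so $\cos(\alpha(\tfrac{\pi}{2}\mp\theta))<1$, and hence the quadratic $u^2-2u\cos(\alpha(\tfrac{\pi}{2}\mp\theta))+1=(u-\cos(\cdots))^2+\sin^2(\cdots)$ in $u=t^\alpha$ is bounded below by a positive multiple of $1+t^{2\alpha}$; since the numerator is $O(t^\alpha)$, the claimed bound follows. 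In particular $\mu_\pm$ is integrable on $(0,\infty)$, so by dominated convergence the limits $\Gpos(0^+)=\int_0^\infty\mu_+(t)\,dt$ and $\Gpos(0^-)=\int_0^\infty\mu_-(t)\,dt$ exist and are finite (this is all we shall need about them here), and consequently $g$ is bounded and continuous at $0$ with $g(0)=0$.

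The core estimate is the following: for every $u>0$,
\[
\biggl|\int_0^\infty\mu_+(t)(e^{-tu}-1)\,dt\biggr|\le\int_0^\infty|\mu_+(t)|\min\{1,tu\}\,dt\le Cu^{\alpha-1},
\]
and likewise with $\mu_+$ replaced by $\mu_-$, and more generally with $e^{-tu}-1$ replaced by $e^{-tx}-e^{-ty}$ whenever $x,y\ge0$ and $|x-y|=u$ (using $|e^{-tx}-e^{-ty}|\le\min\{1,t|x-y|\}$). To see this, assume first $u<1$ and split the integral at $t=1/u$. On $(0,1/u)$ use $\min\{1,tu\}=tu$ and $|\mu_+(t)|\le C\min\{1,t^{-\alpha}\}$, splitting further at $t=1$, to bound the contribution by $Cu\int_0^1 t\,dt+Cu\int_1^{1/u}t^{1-\alpha}\,dt\le C'u+C'u^{\alpha-1}\le C''u^{\alpha-1}$ (here $\alpha<2$ makes $\int_1^{1/u}t^{1-\alpha}\,dt=O(u^{\alpha-2})$, and $u<1$, $\alpha-1<1$ give $u\le u^{\alpha-1}$). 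On $(1/u,\infty)$ use $\min\{1,tu\}=1$ and $|\mu_+(t)|\le Ct^{-\alpha}$ to get $C\int_{1/u}^\infty t^{-\alpha}\,dt=\tfrac{C}{\alpha-1}u^{\alpha-1}$. For $u\ge1$ simply bound the integral by $\int_0^\infty|\mu_+(t)|\,dt\le\bigl(\int_0^\infty|\mu_+(t)|\,dt\bigr)u^{\alpha-1}$.

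It remains to assemble the three cases. If $x,y>0$, then $g(x)-g(y)=\Gpos(x)-\Gpos(y)=\int_0^\infty\mu_+(t)(e^{-tx}-e^{-ty})\,dt$, and the core estimate with $u=|x-y|$ gives $|g(x)-g(y)|\le C|x-y|^{\alpha-1}$; the case $x,y<0$ is identical using $\mu_-$. Finally, if $x>0>y$ (the case $y>0>x$ being symmetric), then, using the identification of $\Gpos(0^\pm)$ above,
\[
g(x)-g(y)=\bigl(\Gpos(x)-\Gpos(0^+)\bigr)-\bigl(\Gpos(y)-\Gpos(0^-)\bigr)=\int_0^\infty\mu_+(t)(e^{-tx}-1)\,dt-\int_0^\infty\mu_-(t)(e^{ty}-1)\,dt,
\]
and the core estimate bounds the two terms by $Cx^{\alpha-1}$ and $C|y|^{\alpha-1}$ respectively; since $0<x\le|x-y|$ and $0<|y|\le|x-y|$ and $v\mapsto v^{\alpha-1}$ is increasing, both are at most $C|x-y|^{\alpha-1}$, which completes the proof. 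There is no deep obstacle here — the argument is elementary — but care is needed in two places: the uniform lower bound on the denominator of $\mu_\pm$, which genuinely uses the precise range $|\theta|\le\tfrac{\pi}{\alpha}-\tfrac{\pi}{2}$, and the cross case $x>0>y$, where one must first identify $\Gpos(0^\pm)$ with $\int_0^\infty\mu_\pm(t)\,dt$ before the triangle-inequality split applies. The exponent $\alpha-1$ is sharp: it arises from $\int_{1/u}^\infty t^{-\alpha}\,dt\asymp u^{\alpha-1}$ and reflects the exact $t^{-\alpha}$ decay of $\mu_\pm$ at infinity.
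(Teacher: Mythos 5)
Your proof is correct, and it takes a genuinely different route from the paper's. The paper works on the Fourier side: it subtracts from $\Gpos$ the single term $c\,e^{-x}\chi_{(0,\infty)}(x)$ with $c = 2\sin\tfrac{\pi}{\alpha}\sin\theta$ chosen so that the modified function $f$ has Fourier transform decaying like $|\xi|^{-\alpha}$ (the explicit formula for $\fourier\Gpos$ from Proposition~\ref{pr1} is used here), and then invokes the standard estimate $|f(x)-f(y)|\le\int\min\{2,|x-y||\xi|\}\,|\fourier f(\xi)|\,d\xi \le C|x-y|^{\alpha-1}$; the residual $g-f$ is Lipschitz by inspection. You instead work on the physical side directly from the completely-monotone representation~\eqref{eq:gdef}: you bound the Laplace density $\mu_\pm(t)$ by $C\min\{1,t^{-\alpha}\}$ (correctly using $|\theta|\le\tfrac{\pi}{\alpha}-\tfrac{\pi}{2}$ to keep the denominator away from zero), then estimate $\int_0^\infty|\mu_\pm(t)|\min\{1,t|x-y|\}\,dt\le C|x-y|^{\alpha-1}$, and handle the jump by an explicit case split $x,y>0$ / $x,y<0$ / $x>0>y$. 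Both arguments extract the exponent $\alpha-1$ from the same tail behaviour ($|\xi|^{-\alpha}$ on the Fourier side, $t^{-\alpha}$ on the Laplace side). The paper's version is shorter once $\fourier\Gpos$ is in hand and avoids the three-case bookkeeping by cancelling the jump with a single exponential; yours is more elementary and self-contained, not requiring Proposition~\ref{pr1} or Plancherel-type reasoning, at the cost of the explicit cross case $x>0>y$, which you handle correctly by identifying $\Gpos(0^\pm)=\int_0^\infty\mu_\pm(t)\,dt$ and using monotonicity of $v\mapsto v^{\alpha-1}$.
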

\begin{proof}
We consider first an auxiliary function $f(x) = \Gpos(x) - c e^{-x} \chi_{(0, \infty)}(x)$, where
\[
c = 2 \sin \tfrac{\pi}{\alpha} \sin \theta = -i \sin \tfrac{\pi}{\alpha} (e^{i \theta} - e^{-i \theta}).
\]
Note that $f$ is continuous on $\RR \setminus \{0\}$, and
\[
\begin{aligned}
\fourier f(\xi) & = \sin \tfrac{\pi}{\alpha} \biggl(\frac{\alpha}{\psi(\xi)-1}-\frac{e^{i \theta}}{\xi - e^{i \theta}}+\frac{e^{-i \theta}}{\xi+e^{-i \theta}} \biggr) + \frac{c i}{\xi - i} \\
& = \sin \tfrac{\pi}{\alpha} \biggl(\frac{\alpha}{\psi(\xi)-1} - e^{i \theta} \biggl(\alpha \frac{1}{\xi-e^{i \theta}} - \frac{1}{\xi - i}\biggr) + e^{-i \theta} \biggl( \frac{1}{\xi+e^{-i \theta}} - \frac{1}{\xi - i} \biggr) \biggr) .
\end{aligned}
\]
It follows that $\fourier f(\xi)= O(|\xi|^{-\alpha})+O(|\xi|^{-2}) = O(|\xi|^{-\alpha})$ as $|\xi|\rightarrow \infty$, and therefore $|\fourier f(\xi)| \leqslant C_1 / (1 + |\xi|^\alpha)$ for $\xi \in \RR$. In particular, $\fourier f$ is integrable, and hence $f$, modified appropriately at $0$, is a continuous function. Furthermore, for $x,y\in\RR$,
\begin{equation}
\begin{split}
|f(x)-f(y)| & = \left|\int_{-\infty}^\infty (e^{i\xi x}-e^{i\xi y})\fourier f(\xi)d\xi \right|\leqslant \int_{-\infty}^\infty \min\{ 2, |x-y||\xi|\} \, \frac{C_1}{|\xi|^\alpha}\, d\xi \\
& = |x - y|^{\alpha - 1} \int_{-\infty}^\infty \min\{2, |t|\} \, \frac{C_1}{|t|^\alpha} \, dt = C_2 |x - y|^{\alpha - 1};
\end{split}
\end{equation}
we used a substitution $\xi = |x-y|^{-1}t$ in the penultimate step. Thus, $f$ is H\"older continuous with exponent $\alpha - 1$.

It remains to observe that $g - f$ (modified appropriately at zero) is bounded and Lipschitz continuous. Indeed, by definition, for some constants $c_1, c_2, c_3, c_4$ we have $g(x) - f(x) = (c_1+c_2 e^{-x}) \chi_{(0,\infty)}(x) + (c_3+c_4 e^x )\chi_{(-\infty,0)}(x)$ when $x \ne 0$. Since both $f$ and $g$ are continuous at zero, we necessarily have $c_1+c_2 = c_3+c_4$, and consequently $g - f$ is Lipschitz continuous.
\end{proof}

\section{Spectral expansion of transition operators $P_t^0$}

In this section we estabilish a generalised eigenfunction expansion for the transition operators $P_t^0$.

\subsection{Multiplication by test functions}

Recall that the Laplace transform of the transition density $p_t^0(x,y)$ with respect to $t$ is equal to the potential kernel $u^0_\lambda(x, y)$. Our first goal is to apply~\eqref{eq:uzero} to express the Laplace transform of $\int_{-\infty}^\infty \int_{-\infty}^\infty f(x) g(y) p^0_t(x, y) dx dy$ with respect to $t$ in terms of Fourier transforms of $f$ and $g$, where $f$ and $g$ are suitable test functions.

\begin{lemma}\label{lem:fgp}
If $f$ and $g$ are in both $\leb^1(\RR)$ and $\leb^2(\RR)$, then
\begin{equation}\label{eq:fgp}
\begin{aligned}
 & \int_0^\infty \int_{-\infty}^\infty \int_{-\infty}^\infty e^{-\lambda t} f(x) g(y) p^0_t(x, y) dx dy dt \\
 & \qquad = \frac{1}{2 \pi} \int_{-\infty}^\infty \frac{\fourier f(-\xi) \fourier g(\xi)}{\lambda + \psi(\xi)} \, d\xi \\
 & \qquad\qquad - \frac{1}{u_\lambda(0)} \biggl( \frac{1}{2 \pi} \int_{-\infty}^\infty \frac{\fourier f(-\xi)}{\lambda + \psi(\xi)} \, d\xi \biggr) \biggl( \frac{1}{2 \pi} \int_{-\infty}^\infty \frac{\fourier g(\eta)}{\lambda + \psi(\eta)} \, d\eta \biggr)
\end{aligned}
\end{equation}
for all $\lambda > 0$.
\end{lemma}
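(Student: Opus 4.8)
The plan is to apply Fubini's theorem to move the integral in $t$ inside, so that the left-hand side of~\eqref{eq:fgp} becomes an integral of the $\lambda$-potential kernel $u^0_\lambda(x,y)$; then to substitute the explicit formula~\eqref{eq:uzero}; and finally to pass to the Fourier side via Plancherel's theorem, using $\fourier u_\lambda(\xi) = 1/(\psi(-\xi)+\lambda)$ and the convolution theorem.

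First I would record that $u_\lambda \in \leb^1(\RR) \cap \leb^\infty(\RR)$, hence $u_\lambda \in \leb^p(\RR)$ for every $p$. Indeed, $u_\lambda \ge 0$ with $\int_\RR u_\lambda = \fourier u_\lambda(0) = 1/\lambda$, and $u_\lambda$ is bounded: since $\re \psi(-\xi) = |\xi|^\alpha \cos(\alpha\theta)$ with $\cos(\alpha\theta) > 0$ (because $|\alpha\theta| \le \pi - \tfrac{\alpha\pi}{2} < \tfrac{\pi}{2}$, using $|\theta| \le \tfrac{\pi}{\alpha} - \tfrac{\pi}{2}$ and $\alpha > 1$), we get $|\fourier u_\lambda(\xi)| \le 1/(\lambda + |\xi|^\alpha \cos(\alpha\theta))$, which is integrable as $\alpha > 1$. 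Now $0 \le p^0_t(x,y) \le p_t(y-x)$, so by Tonelli's theorem
\[
 \int_0^\infty \!\! \int_\RR \!\! \int_\RR e^{-\lambda t} |f(x)| |g(y)| p^0_t(x,y) \, dx \, dy \, dt \le \int_\RR \!\! \int_\RR |f(x)| |g(y)| u_\lambda(y-x) \, dx \, dy \le \norm{f}_2 \norm{g}_2 \norm{u_\lambda}_1 ,
\]
the last bound being Young's inequality (write the middle term as $\int_\RR |g| \cdot (|f| * \check u_\lambda)$, where $\check u_\lambda(z) = u_\lambda(-z)$). Hence the triple integral converges absolutely, Fubini's theorem applies, and the left-hand side of~\eqref{eq:fgp} equals $\int_\RR \int_\RR f(x) g(y) u^0_\lambda(x,y) \, dx \, dy$.

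Next I would insert~\eqref{eq:uzero}, splitting this into two absolutely convergent pieces (the second factoring as a product of one-dimensional integrals, each finite by Cauchy--Schwarz). The first piece equals $\int_\RR g(y) (f * u_\lambda)(y) \, dy$; since $f * u_\lambda \in \leb^1 \cap \leb^2$ is real-valued, Plancherel's theorem together with $\fourier(f * u_\lambda) = \fourier f \cdot \fourier u_\lambda$ and the relation $\overline{\fourier h(\xi)} = \fourier h(-\xi)$ for real $h$ gives
\[
 \int_\RR g(y) (f*u_\lambda)(y) \, dy = \frac{1}{2\pi} \int_\RR \fourier g(\xi) \, \fourier f(-\xi) \, \fourier u_\lambda(-\xi) \, d\xi = \frac{1}{2\pi} \int_\RR \frac{\fourier f(-\xi) \fourier g(\xi)}{\lambda + \psi(\xi)} \, d\xi .
\]
For the second piece, the same computation (using $\fourier u_\lambda(-\xi) = 1/(\psi(\xi)+\lambda)$, and for the $x$-integral an additional substitution $\xi \mapsto -\xi$) identifies $\int_\RR f(x) u_\lambda(-x) \, dx$ with $\tfrac{1}{2\pi} \int_\RR \fourier f(-\xi)/(\lambda+\psi(\xi)) \, d\xi$ and $\int_\RR g(y) u_\lambda(y) \, dy$ with $\tfrac{1}{2\pi} \int_\RR \fourier g(\eta)/(\lambda+\psi(\eta)) \, d\eta$. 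Since $u_\lambda(0) \in (0, \infty)$ (a fact already used in~\eqref{expected}), combining the two pieces yields~\eqref{eq:fgp}.

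I expect the only genuine care to be the bookkeeping of absolute convergence — so that Fubini's theorem and the split through~\eqref{eq:uzero} are legitimate — and keeping track of the sign conventions in Plancherel's formula with the normalisation $\fourier f(\xi) = \int_\RR f(x) e^{-i\xi x} dx$. The three Fourier-side integrals converge absolutely without extra effort: $|\lambda + \psi(\xi)| \ge \lambda$, while $\xi \mapsto 1/(\lambda+\psi(\xi))$ decays like $|\xi|^{-\alpha}$ and so lies in $\leb^2(\RR)$ (and, since $\alpha > 1$, in $\leb^1(\RR)$ as well), so that Cauchy--Schwarz bounds each integrand.
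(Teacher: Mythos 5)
Your proposal is correct and follows essentially the same route as the paper: apply Fubini to reduce to an integral of $u^0_\lambda(x,y)$ against $f\otimes g$, substitute~\eqref{eq:uzero}, and identify each piece via Plancherel's theorem using $\fourier u_\lambda(\xi)=1/(\psi(-\xi)+\lambda)$. The only cosmetic differences are that you justify absolute convergence via Young's inequality and $\norm{u_\lambda}_1=1/\lambda$ (the paper instead uses $u^0_\lambda(x,y)\le u_\lambda(y-x)\le u_\lambda(0)$ together with $f,g\in\leb^1$), and you package the first piece as a single Plancherel step through the convolution theorem rather than the paper's two-step Plancherel--Fubini--Plancherel.
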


\begin{proof}
Recall that $p^0_t(x, y) \geqslant 0$ and $\int_0^\infty e^{-\lambda t} p^0_t(x, y) dt = u^0_\lambda(x, y) \le u_\lambda(y - x) \le u_\lambda(0)$. Thus, if $f$ and $g$ are integrable functions, then $e^{-\lambda t} f(x) g(y) p^0_t(x, y)$ is integrable with respect to $t > 0$ and $x, y \in \RR$. By Fubini's theorem and~\eqref{eq:uzero},
\[
\begin{aligned}
 & \int_0^\infty \int_{-\infty}^\infty \int_{-\infty}^\infty e^{-\lambda t} f(x) g(y) p^0_t(x, y) dx dy dt \\
 & \qquad = \int_{-\infty}^\infty \int_{-\infty}^\infty f(x) g(y) u^0_t(x, y) dx dy \\
 & \qquad = \int_{-\infty}^\infty \int_{-\infty}^\infty f(x) g(y) \biggl(u_\lambda(y-x)-\frac{u_\lambda(-x)u_\lambda(y)}{u_\lambda(0)}\biggr) dx dy .
\end{aligned}
\]
Suppose additionally that $f, g \in \leb^2(\RR)$. Since $\fourier u_\lambda(\xi) = 1 / (\lambda + \psi(-\xi))$ is in $\leb^2(\RR)$, we have $u_\lambda \in \leb^2(\RR)$ and, by Plancherel's theorem,
\[
\begin{aligned}
 \int_{-\infty}^\infty f(x) u_\lambda(y - x) dx & = \frac{1}{2 \pi} \int_{-\infty}^\infty e^{i \xi y} \fourier f(\xi) \fourier u_\lambda(\xi) d\xi , \\
 \int_{-\infty}^\infty g(y) u_\lambda(y) dy & = \frac{1}{2 \pi} \int_{-\infty}^\infty \fourier g(\eta) \fourier u_\lambda(-\eta) d\eta .
\end{aligned}
\]
It follows that
\[
\begin{aligned}
 & \int_0^\infty \int_{-\infty}^\infty \int_{-\infty}^\infty e^{-\lambda t} f(x) g(y) p^0_t(x, y) dx dy dt \\
 & \qquad = \frac{1}{2 \pi} \int_{-\infty}^\infty \biggl( \int_{-\infty}^\infty \fourier f(\xi) e^{i \xi y} g(y) \fourier u_\lambda(\xi) d\xi \biggr) dy \\
 & \qquad\qquad - \frac{1}{u_\lambda(0)} \biggl( \frac{1}{2 \pi} \int_{-\infty}^\infty \fourier f(\xi) \fourier u_\lambda(\xi) d\xi \biggr) \biggl( \frac{1}{2 \pi} \int_{-\infty}^\infty \fourier g(\eta) \fourier u_\lambda(-\eta) d\eta \biggr) .
\end{aligned}
\]
Since $\fourier f(\xi) \fourier u_\lambda(\xi)$ and $g(y)$ are integrable, once again applying Fubini's theorem, we eventually find that
\[
\begin{aligned}
 & \int_0^\infty \int_{-\infty}^\infty \int_{-\infty}^\infty e^{-\lambda t} f(x) g(y) p^0_t(x, y) dx dy dt \\
 & \qquad = \frac{1}{2 \pi} \int_{-\infty}^\infty \fourier f(\xi) \fourier g(-\xi) \fourier u_\lambda(\xi) d\xi \\
 & \qquad\qquad - \frac{1}{u_\lambda(0)} \biggl( \frac{1}{2 \pi} \int_{-\infty}^\infty \fourier f(\xi) \fourier u_\lambda(\xi) d\xi \biggr) \biggl( \frac{1}{2 \pi} \int_{-\infty}^\infty \fourier g(\eta) \fourier u_\lambda(-\eta) d\eta \biggr) .
\end{aligned}
\]
The desired result follows from $\fourier u_\lambda(\xi) = 1 / (\lambda + \psi(-\xi))$ after substituting $\xi$ for $-\xi$.
\end{proof}

In~\cite{symzero1}, the process $X_t$ is assumed to be symmetric, and so $\psi$ is real-valued. In this case inversion of the Laplace transform in $t$ in~\eqref{eq:fgp} is possible by extending analytically the right-hand side to $\lambda \in \C \setminus (-\infty, 0]$, and writing down a Stieltjes-like representation in terms of boundary values along $(-\infty, 0)$. In the non-symmetric case this approach is problematic: the right-hand side of~\eqref{eq:fgp} no longer automatically extends to an analytic function in $\C \setminus (-\infty, 0]$. A way around if found by considering more regular test functions, and deforming the contour of integration in $\xi$ and $\eta$ so that $\psi(\xi)$ and $\psi(\eta)$ are again real-valued.

\subsection{Contour deformation}\label{sec:contour}

Throughout this section, we fix $f,g\in\Ha$. Recall that $f$ and $g$ are real-valued, and their Laplace transforms are entire functions such that $\laplace f(\overline{\xi}) = \overline{\laplace f(\xi)}$, $\laplace g(\overline{\xi}) = \overline{\laplace g(\xi)}$. By Lemma~\ref{lm:7}, $|\laplace f(\xi)|$ and $|\laplace g(\xi)|$ are bounded by $C \min\{1, |\xi|^{-1}\}$ in every closed sector which does not contain neither $(0, \infty)$ nor $(-\infty, 0)$.

We will constantly use the following notation. For $r > 0$ we let
\[
\begin{aligned}
h_0(r)&=\cos\theta, \\ h_1(r)&=\Re(e^{i\theta}\laplace f(-ire^{i\theta})), \\ h_2(r)&=\Re(e^{i\theta}\laplace g(ire^{i\theta})) , \\ h_3(r)&=\Re(e^{i\theta}\laplace f(-ire^{i\theta})\laplace g(ire^{i\theta})) .
\end{aligned}
\]
For $j=0,1,2,3$ we define
\[
\phi_j(\lambda)=\frac{1}{\pi}\int_0^\infty \frac{h_j(r)}{r^\alpha+\lambda} \, dr, \qquad \lambda\in\CC\setminus (-\infty,0].
\]
Note that
\[
 \phi_0(\lambda) = \frac{\cos \theta}{\pi} \, \frac{1}{\lambda^{1 - 1/\alpha}} \, \int_0^\infty \frac{1}{t^\alpha + 1} \, dt = \frac{\cos \theta}{\alpha \sin \tfrac{\pi}{\alpha}} \, \frac{1}{\lambda^{1 - 1/\alpha}} \, \int_0^\infty \frac{1}{t^\alpha + 1} \, dt .
\]
Observe also that if $f(x) = g(-x)$, then $h_2(r) = h_1(r)$. Finally, we set
\[
\phi_4(\lambda)=\phi_3(\lambda)-\frac{\phi_1(\lambda)\phi_2(\lambda)}{\phi_0(\lambda)} \, ,\qquad \lambda\in\CC\setminus (-\infty,0],\]
and for later needs we extend the above definitions to $(-\infty, 0)$ by the formula
\[
\phi_j(-\lambda)\mdef\lim_{\varepsilon\rightarrow 0^+}\phi_j(-\lambda+i\varepsilon), \qquad \lambda>0.
\]
The following result is a variant of Lemma~\ref{lem:fgp} after appropriate contour deformation.

\begin{lemma}\label{lem:fgp2}
With the above assumptions and notation, we have
\begin{equation}\label{eq:fgp2}
\int_0^\infty \int_{-\infty}^\infty \int_{-\infty}^\infty e^{-\lambda t} f(x) g(y) p^0_t(x, y) dx dy dt = \phi_4(\lambda), \qquad \lambda > 0 .
\end{equation}
\label{lm:deformacja}
\end{lemma}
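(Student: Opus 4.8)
The plan is to start from Lemma~\ref{lem:fgp}, which expresses the left-hand side of~\eqref{eq:fgp2} as the sum of three contour integrals along $\RR$, involving $\fourier f(-\xi) \fourier g(\xi)$, $\fourier f(-\xi)$, and $\fourier g(\eta)$, each divided by $\lambda + \psi(\xi)$, together with the factor $1/u_\lambda(0)$. Since $\fourier f(\xi) = \laplace f(i\xi)$ and $\laplace f$ is entire with the decay $|\laplace f(\xi)| \le C\min\{1,|\xi|^{-1}\}$ in sectors avoiding $(0,\infty)$ and $(-\infty,0)$ (Lemma~\ref{lm:7}), and since $\psi(\xi) + \lambda$ has no zeros in $\C \setminus i\RR$ for $\lambda > 0$ (because $\psi$ maps $\{\re\xi > 0\}$ into $\C\setminus(-\infty,0]$ via $\psi(\xi) = (e^{-i\theta}\xi)^\alpha$, similarly on the left), I would apply Cauchy's theorem separately on the right half-plane $\re\xi > 0$ and the left half-plane $\re\xi < 0$ to rotate the contour $(0,\infty)$ to the ray $(0, e^{i\theta}\infty)$ and the contour $(-\infty,0)$ to $(-e^{-i\theta}\infty, 0)$. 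Concretely, one uses a version of Lemma~\ref{lm:rotate} (with the rotation angle $\theta$, which satisfies $|\theta| < \tfrac{\pi}{2}$, and with no residues picked up since the integrand is analytic in the closed sector); the decay at $0$ and $\infty$ justifies discarding the circular arcs.

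Next, I would carry out the substitution making $\psi$ real on the rotated rays. On the ray $\xi = r e^{i\theta}$, $r > 0$, we have $\psi(\xi) = r^\alpha$, and $d\xi = e^{i\theta}dr$; on $\xi = -r e^{-i\theta}$ we have $\psi(\xi) = r^\alpha$ and $d\xi = -e^{-i\theta}dr$. Substituting into the first integral $\tfrac{1}{2\pi}\int_\RR \tfrac{\fourier f(-\xi)\fourier g(\xi)}{\lambda+\psi(\xi)}d\xi$ and writing $\fourier f(-\xi) = \laplace f(-i\xi)$, $\fourier g(\xi) = \laplace g(i\xi)$, the two half-line contributions become $\tfrac{1}{2\pi}\int_0^\infty \tfrac{e^{i\theta}\laplace f(-ire^{i\theta})\laplace g(ire^{i\theta}) + e^{-i\theta}\laplace f(ire^{-i\theta})\laplace g(-ire^{-i\theta})}{r^\alpha + \lambda}dr$. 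Using $\laplace f(\overline\xi) = \overline{\laplace f(\xi)}$ and $\laplace g(\overline\xi) = \overline{\laplace g(\xi)}$, the second summand in the numerator is the complex conjugate of the first (note $\overline{ire^{i\theta}} = -ire^{-i\theta}$), so the sum equals $2\re(e^{i\theta}\laplace f(-ire^{i\theta})\laplace g(ire^{i\theta})) = 2 h_3(r)$. Hence the first integral equals $\tfrac{1}{\pi}\int_0^\infty \tfrac{h_3(r)}{r^\alpha + \lambda}dr = \phi_3(\lambda)$. The same manipulation applied to $\tfrac{1}{2\pi}\int_\RR \tfrac{\fourier f(-\xi)}{\lambda+\psi(\xi)}d\xi$ gives $\phi_1(\lambda)$ and to $\tfrac{1}{2\pi}\int_\RR \tfrac{\fourier g(\eta)}{\lambda+\psi(\eta)}d\eta$ gives $\phi_2(\lambda)$. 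Finally, taking $f = g = u_\lambda$ (or rather recognizing directly that $\fourier u_\lambda(\xi) = 1/(\lambda+\psi(-\xi))$ so that $u_\lambda(0) = \tfrac{1}{2\pi}\int_\RR \tfrac{d\xi}{\lambda + \psi(\xi)}$), the identical rotation yields $u_\lambda(0) = \phi_0(\lambda)$, using that the rotated numerator is just $\cos\theta$ — indeed $e^{i\theta}\cdot 1 + e^{-i\theta}\cdot 1$, halved, gives $\cos\theta$, matching $h_0$. Substituting $\phi_1,\phi_2,\phi_3,\phi_0$ into the right-hand side of~\eqref{eq:fgp} reproduces exactly $\phi_3(\lambda) - \phi_1(\lambda)\phi_2(\lambda)/\phi_0(\lambda) = \phi_4(\lambda)$.

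The main obstacle will be rigorously justifying the contour rotation: one must check that the integrand $\xi \mapsto \fourier f(-\xi)\fourier g(\xi)/(\lambda + \psi(\xi))$ (and its two one-factor analogues) is genuinely analytic in the open sectors $\{0 < \arg\xi < \theta\}$ (or $\{\theta < \arg\xi < 0\}$ if $\theta < 0$) up to the boundary, has the requisite $O(|\xi|^{-1+\delta})$ decay as $|\xi|\to 0$ and $O(|\xi|^{-1-\delta})$ decay as $|\xi|\to\infty$ uniformly in the closed sector, and has no poles there. Analyticity of $\laplace f, \laplace g$ is immediate (they are entire), and $\lambda + \psi(\xi) \ne 0$ follows since $\re\psi(re^{i\beta}) = r^\alpha\cos(\alpha(\beta-\theta)) $ — wait, more simply: for $\re\xi > 0$, $\psi(\xi) = (e^{-i\theta}\xi)^\alpha$ with $e^{-i\theta}\xi$ in the right half-plane rotated by $-\theta$, and since $|\theta| \le \tfrac{\pi}{\alpha} - \tfrac{\pi}{2}$ one checks $\arg((e^{-i\theta}\xi)^\alpha) \in (-\pi,\pi)$, so $\psi(\xi) \notin (-\infty,0]$ and $\lambda + \psi(\xi) \ne 0$ for $\lambda > 0$. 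The decay estimates follow from Lemma~\ref{lm:7} for $\laplace f, \laplace g$ (the relevant sectors do not contain $(0,\infty)$ or $(-\infty,0)$ since $0 < |\theta| < \tfrac\pi2$) combined with $|\lambda + \psi(\xi)| \ge c(1 + |\xi|^\alpha)$ in the closed sector; near $\xi = 0$ the factor $1/(\lambda+\psi(\xi))$ is bounded and $\fourier f, \fourier g$ are bounded, giving $O(1)$ there — which is even better than the $O(|\xi|^{-1+\delta})$ needed. With these verifications in place, Lemma~\ref{lm:rotate} applies (with $b = |\theta|$, no poles, hence no residue terms), and the arc contributions vanish. The only remaining care is in the case $\theta = 0$, where no deformation is needed and the identity is immediate; and in matching the signs and orientations when $\theta < 0$, where the ray $(0, e^{i\theta}\infty)$ lies in the lower half-plane and Lemma~\ref{lm:rotate} is applied with the reflected orientation.
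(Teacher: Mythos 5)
Your proposal is correct and follows essentially the same route as the paper's proof: start from Lemma~\ref{lem:fgp}, rotate the contour from $\RR$ to $(-e^{-i\theta}\infty,0)\cup(0,e^{i\theta}\infty)$ via Lemma~\ref{lm:rotate} (justified by the decay bounds of Lemma~\ref{lm:7} and the absence of zeros of $\lambda+\psi$ in the sectors), pair the two rays using $\laplace f(\overline\xi)=\overline{\laplace f(\xi)}$ to obtain $h_3,h_1,h_2$, and identify $u_\lambda(0)$ with $\phi_0(\lambda)$. The extra care you take about verifying the hypotheses of Lemma~\ref{lm:rotate} and handling $\theta\le 0$ is sound but is the same argument the paper leaves more implicit.
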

\begin{proof}
Fix $\lambda > 0$ and denote the left-hand side of~\eqref{eq:fgp2} by $I$. By Lemma~\ref{lem:fgp},
\begin{equation}\label{eq:fgp2:aux}
\begin{aligned}
 I & = \frac{1}{2 \pi} \int_{-\infty}^\infty \frac{\laplace f(-i \xi) \laplace g(i \xi)}{\lambda + \psi(\xi)} \, d\xi \\
 & \qquad - \frac{1}{u_\lambda(0)} \biggl( \frac{1}{2 \pi} \int_{-\infty}^\infty \frac{\laplace f(-i \xi)}{\lambda + \psi(\xi)} \, d\xi \biggr) \biggl( \frac{1}{2 \pi} \int_{-\infty}^\infty \frac{\laplace g(i \eta)}{\lambda + \psi(\eta)} \, d\eta \biggr) .
\end{aligned}
\end{equation}
Now we deform the contour of integration $\RR$ to $(-e^{-i \theta} \infty, 0) \cup (0, e^{i \theta} \infty)$ in each of the three integrals in the right-hand side. Recall that $\psi(e^{i \theta} r) = \psi(-e^{-i \theta} r) = r^\alpha$. We use Lemma~\ref{lm:rotate}: since 
\[\begin{gathered}
|\laplace f(-i\xi)| \le C \min(1, |\xi|^{-1}) , \qquad |\laplace g(i \xi)| \le C \min(1, |\xi|^{-1}) , \\ \frac{1}{|\lambda + \psi(\xi)|} \le C \min\{1, |\xi|^{-\alpha}\}
\end{gathered}\]
in the sector $\{\xi \in \C : |\arg(\xi)| \le |\theta|\}$ (see Lemma~\ref{lm:7}), we have
\[
\begin{aligned}
\frac{1}{2 \pi} \int_{(0, \infty)} \frac{\laplace f(-i \xi) \laplace g(i \xi)}{\lambda + \psi(\xi)} \, d\xi & = 
\frac{1}{2 \pi} \int_{(0, e^{i \theta} \infty)} \frac{\laplace f(-i \xi) \laplace g(i \xi)}{\lambda + \psi(\xi)} \, d\xi \\
& = \frac{1}{2 \pi} \int_0^\infty \frac{e^{i \theta} \laplace f(-i e^{i \theta} r) \laplace g(i e^{i \theta} r)}{\lambda + r^\alpha} \, dr .
\end{aligned}
\]
In a similar way,
\[
\begin{aligned}
\frac{1}{2 \pi} \int_{(-\infty, 0)} \frac{\laplace f(-i \xi) \laplace g(i \xi)}{\lambda + \psi(\xi)} \, d\xi & = 
\frac{1}{2 \pi} \int_{(-e^{-i \theta} \infty, 0)} \frac{\laplace f(-i \xi) \laplace g(i \xi)}{\lambda + \psi(\xi)} \, d\xi \\
& = \frac{1}{2 \pi} \int_0^\infty \frac{e^{-i \theta} \laplace f(i e^{-i \theta} r) \laplace g(-i e^{-i \theta} r)}{\lambda + r^\alpha} \, dr .
\end{aligned}
\]
It follows that
\[
\begin{aligned}
& \frac{1}{2 \pi} \int_{-\infty}^\infty \frac{\laplace f(-i \xi) \laplace g(i \xi)}{\lambda + \psi(\xi)} \, d\xi \\
& \qquad = \frac{1}{2 \pi} \int_0^\infty \frac{e^{-i \theta} \laplace f(i e^{-i \theta} r) \laplace g(-i e^{-i \theta} r) + e^{-i \theta} \laplace f(i e^{-i \theta} r) \laplace g(-i e^{-i \theta} r)}{\lambda + r^\alpha} \, dr \\
& \qquad = \frac{1}{\pi} \int_0^\infty \frac{\re(e^{-i \theta} \laplace f(i e^{-i \theta} r) \laplace g(-i e^{-i \theta} r))}{\lambda + r^\alpha} \, dr = \phi_3(\lambda) .
\end{aligned}
\]
The same argument applies to the other two integrals in the right-hand side of~\eqref{eq:fgp2:aux}, which are found to be equal to $\phi_1(\lambda)$ and $\phi_2(\lambda)$, respectively. Finally, again by the same argument,
\begin{equation}
\begin{split}
u_\lambda(0) & = \frac{1}{2\pi}\int_{-\infty}^\infty \frac{1}{\lambda + \psi(-\eta)} \, d\eta =\frac{1}{2\pi}\int_0^\infty \frac{e^{i\theta} + e^{-i \theta}}{\lambda+r^\alpha} \, dr = \phi_0(\lambda).
\end{split}
\end{equation}
We conclude that
\begin{equation}
I = \phi_3(\lambda)-\frac{\phi_1(\lambda) \phi_2(\lambda)}{\phi_0(\lambda)} = \phi_4(\lambda), \qquad \lambda > 0 ,
\label{precauchy}
\end{equation}
and the proof is complete.
\end{proof}

\subsection{Application of the Cauchy's integral formula}\label{sec:cauchy}
We consider $f, g \in \Ha$, and we continue to use the notation introduced in the previous section. We now use Cauchy's integral formula given in Theorem~\ref{thm:branges} for the function $\sqrt{\xi} \phi_4(\xi)$. The following set of lemmas justify the application of this result. First we check that $\phi_4\in \mathscr{N}^+$.

\begin{lemma}
\label{lm:bounded.type}
The function $\lambda \phi_4(-\lambda^2)$ is in the Nevanlinna class $\mathscr{N}^+$.
\end{lemma}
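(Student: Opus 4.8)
The plan is to express $\lambda \phi_4(-\lambda^2)$ as a quotient of bounded analytic functions with outer denominator, using the structure $\phi_4 = \phi_3 - \phi_1\phi_2/\phi_0$ and the closure properties from Lemma~\ref{lm:sum.is.N}. First I would analyze each $\phi_j$ separately. By definition,
\[
\phi_j(\lambda)=\frac{1}{\pi}\int_0^\infty \frac{h_j(r)}{r^\alpha+\lambda}\,dr,
\]
so substituting $\lambda = -\lambda^2$ (with $\im\lambda>0$, so $-\lambda^2$ lies in $\C\setminus(-\infty,0]$) and changing variables $r = s^{2/\alpha}$ if convenient, one sees that $\phi_j(-\lambda^2)$ is, up to harmless factors, a Cauchy--Stieltjes-type transform of a measure on a ray. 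The key point is that $r^\alpha + \lambda \mapsto$ the map $\lambda\mapsto 1/(r^\alpha-\lambda^2)$ has its singularities only on $\RR$, so $\phi_j(-\lambda^2)$ is analytic in $H_+$; and from Lemma~\ref{lm:7} (applied to $\laplace f$, $\laplace g$) together with the bound $|h_j(r)|\le C\min\{1,r^{-1}\}$ for $j=1,2,3$ (and $h_0\equiv\cos\theta$) one gets quantitative growth control.

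Next I would handle the three "easy" pieces. For $\phi_0$: the explicit formula $\phi_0(\lambda)=\frac{\cos\theta}{\alpha\sin\frac{\pi}{\alpha}}\lambda^{-1+1/\alpha}\int_0^\infty(t^\alpha+1)^{-1}dt$ shows that $\lambda\phi_0(-\lambda^2)$ is a constant times $\lambda\cdot(-\lambda^2)^{-1+1/\alpha} = c\,\lambda\cdot\lambda^{-2+2/\alpha}$ (principal branch), i.e. $c\,\lambda^{-1+2/\alpha} = c\,\lambda^{2/\alpha}/\lambda$; since $0<2/\alpha<2$ this is (a power of) an outer function, hence $1/(\lambda\phi_0(-\lambda^2))$ is outer as well — here I would invoke Lemma~\ref{lm:id.is.N} and Lemma~\ref{lm:BTcircle} to recognize powers $z^\beta$ with $\beta\in(0,2)$ as outer (since $z^{\beta/2}$ has nonnegative real part in $H_+$, or since $1/z^{\beta}$ is Stieltjes when $\beta\in(0,1)$, with the general case following by taking roots/products). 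For $\phi_1,\phi_2,\phi_3$: each is a Cauchy--Stieltjes-type integral against an $\RR$-valued (not necessarily positive) density $h_j(r)$, but writing $h_j = h_j^+ - h_j^-$ as a difference of its positive and negative parts exhibits $\phi_j(\lambda)$ as a difference of genuine Stieltjes functions of $\lambda$; by Lemma~\ref{lm:stieltjes} and Lemma~\ref{lm:BTcircle} each Stieltjes function composed with $\lambda\mapsto-\lambda^2$ lies in $\mathscr{N}^+$ (it is bounded in $H_+$ near the relevant rays, and more carefully it is a ratio of $H^\infty$ functions by outerness of the denominator $s+(-\lambda^2)$-type factors — alternatively one simply notes a Stieltjes function maps $H_+$ into the lower half-plane, hence $i\phi_j(-\lambda^2)$ has nonnegative imaginary part... ), so $\phi_j(-\lambda^2)\in\mathscr{N}^+$ for $j=1,2,3$. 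Multiplying by the scalar $\lambda$ (itself in $\mathscr{N}^+$, being bounded-by-$H^\infty$-ratios trivially or just entire of the right type) and using that products and sums stay in $\mathscr{N}^+$ (Lemma~\ref{lm:sum.is.N}) gives $\lambda\phi_3(-\lambda^2)\in\mathscr{N}^+$ and $\lambda\phi_1(-\lambda^2),\lambda\phi_2(-\lambda^2)\in\mathscr{N}^+$.

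Finally I would assemble:
\[
\lambda\phi_4(-\lambda^2) = \lambda\phi_3(-\lambda^2) - \frac{\bigl(\lambda\phi_1(-\lambda^2)\bigr)\bigl(\lambda\phi_2(-\lambda^2)\bigr)}{\lambda\phi_0(-\lambda^2)}.
\]
The first term is in $\mathscr{N}^+$; the numerator of the second term is a product of two $\mathscr{N}^+$ functions, hence in $\mathscr{N}^+$ by Lemma~\ref{lm:sum.is.N}; and the denominator $\lambda\phi_0(-\lambda^2)$ is (a positive constant times) an outer function by the discussion above. By the last clause of Lemma~\ref{lm:sum.is.N}, the ratio of an $\mathscr{N}^+$ function by an outer function is in $\mathscr{N}^+$, so the second term is in $\mathscr{N}^+$; adding, $\lambda\phi_4(-\lambda^2)\in\mathscr{N}^+$. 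I expect the main obstacle to be the bookkeeping needed to certify that each $\phi_j(-\lambda^2)$ genuinely lands in $\mathscr{N}^+$ rather than merely being analytic of slow growth: one must either carefully decompose $h_j$ into positive/negative parts to invoke the Stieltjes/outer machinery cleanly, or verify directly the $\mathscr{N}^+ = (\text{ratio of }H^\infty,\ \text{outer denominator})$ structure via an explicit factorization like $1/(s+z)$ being outer — and one must make sure the scalar factor $\lambda$ and the branch of $(-\lambda^2)^{1-1/\alpha}$ are handled with the correct principal branches so that outerness of $\lambda\phi_0(-\lambda^2)$ really holds in $H_+$.
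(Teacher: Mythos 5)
Your proposal follows the same architecture as the paper's proof: decompose $h_j$ into positive and negative parts to reduce to Stieltjes functions, show $\lambda\phi_0(-\lambda^2)$ is outer, and assemble via Lemma~\ref{lm:sum.is.N}. However, there is a genuine gap in the central step for $j=1,2,3$, and one of your proposed fixes is actually incorrect.

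The problem is that the map $\lambda \mapsto -\lambda^2$ does \emph{not} map $H_+$ into $H_+$; it maps $H_+$ onto all of $\CC\setminus(-\infty,0]$. So your parenthetical ``alternatively one simply notes a Stieltjes function maps $H_+$ into the lower half-plane, hence $i\phi_j(-\lambda^2)$ has nonnegative imaginary part'' does not work: a Stieltjes function $\sigma$ has no definite sign of imaginary part on $\CC\setminus(-\infty,0]$ (it is positive on $H_-$ and negative on $H_+$), so you cannot apply Lemma~\ref{lm:BTcircle} to $\sigma(-\lambda^2)$ directly. What is actually needed — and what the paper does — is a partial-fraction step before invoking Stieltjes/outer machinery. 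The paper first multiplies by $\lambda$, rewrites
\[
\lambda\phi_j(-\lambda^2) = \frac{1}{\alpha\pi}\int_0^\infty \frac{\lambda\, s^{1/\alpha-1}h_j(s^{1/\alpha})}{s-\lambda^2}\,ds,
\]
and then (after a further substitution making the denominator $u^2-\lambda^2$) uses $\tfrac{\lambda}{u^2-\lambda^2}=\tfrac12\bigl(\tfrac{1}{u-\lambda}-\tfrac{1}{u+\lambda}\bigr)$. After splitting $h_j$ into its positive and negative parts, each of the four resulting integrals is a genuine Stieltjes function of $\lambda$ or of $-\lambda$ (against a nonnegative measure), and only then can Lemmas~\ref{lm:stieltjes}, \ref{lm:BTcircle}, \ref{lm:sum.is.N} be applied. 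You gesture at ``an explicit factorization like $1/(s+z)$ being outer'' but do not carry it out; without the partial fraction, you cannot even express $\phi_j(-\lambda^2)$ as a finite combination of things Lemma~\ref{lm:sum.is.N} acts on, because $\mathscr{N}^+$-membership of the individual kernels $1/(s-\lambda^2)$ does not automatically pass through the integral over $s$.

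Your treatment of $\phi_0$ via the explicit power $\lambda\phi_0(-\lambda^2)=c\,\lambda(-\lambda^2)^{1/\alpha-1}$ is essentially sound (and the branch bookkeeping you flag does work out, since the argument stays in $(0,\pi)$ for $\alpha\in(1,2)$), but the paper's route is simpler and more robust: because $h_0\equiv\cos\theta\geqslant 0$, the same partial-fraction representation immediately gives $\im\bigl(\lambda\phi_0(-\lambda^2)\bigr)\geqslant 0$ on $H_+$, hence outerness by Lemma~\ref{lm:BTcircle}, with no need to evaluate the integral or track principal branches. In short: your skeleton is right and you cite the right lemmas, but the step you defer as ``bookkeeping'' — the partial-fraction reduction to honest Stieltjes kernels in $\lambda$ — is the actual content of the proof, and one of your two proposed shortcuts around it is false.
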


\begin{proof}
Recall that for $j = 0, 1, 2, 3$ and $\lambda \in \C \setminus (-\infty, 0]$, we have
\[
\phi_j(\lambda) = \frac{1}{\pi} \int_0^\infty \frac{h_j(r)}{r^\alpha+\lambda} \, dr = \frac{1}{\alpha \pi} \int_0^\infty \frac{s^{1/\alpha - 1} h_j(s^{1/\alpha})}{s + \lambda} \, ds .
\]
Therefore, if $\im \lambda > 0$, then
\begin{equation}
\label{eq:phi:stieltjes}
\begin{aligned}
\lambda \phi_j(-\lambda^2) & = \frac{1}{\alpha \pi} \int_0^\infty \frac{\lambda s^{1/\alpha - 1} h_j(s^{1/\alpha})}{s - \lambda^2} \, ds \\
& = \frac{1}{2 \alpha \pi} \int_0^\infty \frac{s^{1/\alpha - 1} h_j(s^{1/\alpha})}{s - \lambda} \, ds - \frac{1}{2 \alpha \pi} \int_0^\infty \frac{s^{1/\alpha - 1} h_j(s^{1/\alpha})}{s + \lambda} \, ds .
\end{aligned}
\end{equation}
Writing $h_j(r) = \max\{h_j(r), 0\} - \max\{-h_j(r), 0\}$, we see that $\lambda \phi_j(-\lambda^2 \lambda) = \phi_{j,1}(-\lambda) - \phi_{j,2}(-\lambda) - \phi_{j,3}(\lambda) + \phi_{j,4}(\lambda)$ for appropriate Stieltjes functions $\phi_{j,1}, \phi_{j,2}, \phi_{j,3}, \phi_{j,4}$. By Lemmas~\ref{lm:BTcircle} and~\ref{lm:sum.is.N}, $\lambda \phi_j(-\lambda^2)$ is in the Nevanlinna class $\mathscr{N}^+$.

Similarly, we show that $\lambda \phi_0(-\lambda^2)$ is an outer function. We have $h_0(s^{1/\alpha}) \geqslant 0$ in~\eqref{eq:phi:stieltjes}, and hence $\im(\lambda \phi_0(-\lambda^2)) \geqslant 0$ whenever $\im \lambda > 0$. By Lemma~\ref{lm:BTcircle}, $-i\lambda \phi_0(-\lambda^2)$ is an outer function, and hence also $\lambda \phi_0(-\lambda^2)$ is an outer function.

Lemma~\ref{lm:sum.is.N} implies now that the function $\phi_4 = \phi_3 - \phi_1\phi_2 / \phi_0$ is in the Nevanlinna class $\mathscr{N}^+$.
\end{proof}

Next, we study the boundary values of $\phi_4$. This will involve the following pricipal value integrals:
\[
K_j(s)=\frac{1}{\pi}\pvint_0^\infty \frac{h_j(r)dr}{r^\alpha-s^\alpha}, \qquad s > 0 ,
\]
where $j = 0, 1, 2, 3$. Note that since $h_0(r) = \cos \theta$, we have
\[
K_0(s)=-\frac{\cot\frac{\pi}{\alpha} \cos\theta}{\alpha s^{\alpha - 1}} \, , \qquad s > 0 ;
\]
see, for example, \cite{symzero1}, Example 5.1.
For $j = 0, 1, 2, 3$, define also
\[
L_j(s)=\frac{h_j(s)}{\alpha s^{\alpha-1}}\, , \qquad s > 0.
\]
In particular, 
\[
L_0(s) = \frac{\cos \theta}{\alpha s^{\alpha - 1}} \, , \qquad s > 0.
\]

\begin{lemma}\label{lem:imaginaryPart}
With the above notation, $\phi_4$ extends to a continuous function in the closed upper complex half-plane, except possibly at $0$. If this extension is denoted by the same symbol $\phi_4$, then we have
\begin{equation}
\label{eq:imaginaryPart}
\begin{split}
\Im\phi_4(-s^\alpha) & = -L_3(s)+\frac{ L_1(s)L_2(s)}{L_0(s)}\\
 & - \biggl(\Im\frac{1}{K_0(s)-iL_0(s)}\biggr)^{-1} \Im\frac{K_1(s)- iL_1(s)}{K_0(s)-iL_0(s)} \, \Im \frac{ K_2(s)-iL_2(s)}{K_0(s)-iL_0(s)} .
\end{split}
\end{equation}
\end{lemma}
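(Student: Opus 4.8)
The strategy is to compute the boundary value of $\phi_4(-s^\alpha + i\varepsilon)$ as $\varepsilon \to 0^+$ by first identifying the boundary values of the individual functions $\phi_j$, and then plugging these into the algebraic relation $\phi_4 = \phi_3 - \phi_1\phi_2/\phi_0$. The starting point is the Stieltjes-type representation $\phi_j(\lambda) = \frac{1}{\pi}\int_0^\infty \frac{h_j(r)}{r^\alpha + \lambda}\,dr$. Writing $\lambda = -s^\alpha + i\varepsilon$ and passing to the limit, the Sokhotski--Plemelj formula (justified by the regularity estimates $|h_j(r)| \le C\min\{1, r^{-1}\}$, $|h_j'(r)| \le C\min\{1, r^{-2}\}$ coming from Lemmas~\ref{lm:7} and~\ref{lm:7p}, which make $h_j$ H\"older continuous enough for the principal value to exist) gives
\[
\phi_j(-s^\alpha) = K_j(s) - i L_j(s), \qquad s > 0,
\]
where the real part is the principal value integral $K_j(s) = \frac{1}{\pi}\pvint_0^\infty \frac{h_j(r)}{r^\alpha - s^\alpha}\,dr$ and the imaginary part $-L_j(s) = -\frac{h_j(s)}{\alpha s^{\alpha-1}}$ comes from the residue-type contribution of the pole at $r = s$ (after the change of variables $s \mapsto s^\alpha$, the measure $\frac{1}{\pi}h_j(r)\,dr$ near $r = s$ contributes $\frac{h_j(s)}{\alpha s^{\alpha-1}}$). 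This also shows $\phi_j$ extends continuously to the closed upper half-plane away from $0$; continuity of $\phi_4$ away from $0$ then follows since $\phi_0(-s^\alpha) = K_0(s) - iL_0(s)$ has $L_0(s) = \frac{\cos\theta}{\alpha s^{\alpha-1}} \ne 0$, so the denominator in $\phi_1\phi_2/\phi_0$ does not vanish on the boundary.

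Next I would take imaginary parts of the identity $\phi_4(-s^\alpha) = \phi_3(-s^\alpha) - \dfrac{\phi_1(-s^\alpha)\phi_2(-s^\alpha)}{\phi_0(-s^\alpha)}$. The term $\Im\phi_3(-s^\alpha) = -L_3(s)$ is immediate. For the second term, write $a = \phi_1(-s^\alpha) = K_1 - iL_1$, $b = \phi_2(-s^\alpha) = K_2 - iL_2$, $c = \phi_0(-s^\alpha) = K_0 - iL_0$, and apply Lemma~\ref{lm:complex}, which states
\[
\Im\!\left(\frac{ab}{c}\right) = \frac{\Im a\,\Im b}{\Im c} + \frac{\Im(a/c)\,\Im(b/c)}{\Im(1/c)}.
\]
Here $\Im c = -L_0(s)$, $\Im a = -L_1(s)$, $\Im b = -L_2(s)$, so the first summand is $\dfrac{L_1 L_2}{L_0}$ with the correct sign. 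For the second summand, note $\Im(1/c) = -\Im c/|c|^2$, but more directly one keeps it in the form $\bigl(\Im(1/c)\bigr)^{-1}\Im(a/c)\Im(b/c)$; since $1/c = 1/(K_0 - iL_0)$, $a/c = (K_1 - iL_1)/(K_0 - iL_0)$, $b/c = (K_2 - iL_2)/(K_0 - iL_0)$, this is exactly the last line of~\eqref{eq:imaginaryPart}, up to the overall minus sign that propagates from $\phi_4 = \phi_3 - \phi_1\phi_2/\phi_0$. Collecting the three contributions yields~\eqref{eq:imaginaryPart}.

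The main obstacle is the rigorous justification of the boundary limit and of the Sokhotski--Plemelj step: one must show that $\lim_{\varepsilon\to 0^+}\frac{1}{\pi}\int_0^\infty \frac{h_j(r)}{r^\alpha - s^\alpha + i\varepsilon}\,dr$ exists and equals $K_j(s) - iL_j(s)$, uniformly enough on compact subsets of $(0,\infty)$ to guarantee continuity of the extension. This is where the derivative bound $|h_j'(r)| \le C\min\{1, r^{-2}\}$ is essential: it lets one split the integral near $r = s$ as $\int \frac{h_j(r) - h_j(s)}{r^\alpha - s^\alpha} + h_j(s)\int\frac{1}{r^\alpha - s^\alpha}$, where the first piece is absolutely convergent by H\"older continuity of $h_j$ (so the limit passes through by dominated convergence) and the second is an explicit principal-value integral whose $\varepsilon\to 0^+$ behaviour produces the $-iL_j$ term; the tail $r \ge 2s$ is harmless by $|h_j(r)| \le C r^{-1}$. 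The estimates of Lemma~\ref{lm:square:Kfg0} guarantee $K_j$ is finite, and together with the explicit formulas for $K_0, L_0$ one checks the denominators stay away from zero, so the algebraic manipulation via Lemma~\ref{lm:complex} is legitimate for every $s > 0$.
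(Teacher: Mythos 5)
Your proposal is correct and takes essentially the same route as the paper: apply Sokhotski's formula to each $\phi_j$ to obtain the boundary value $\phi_j(-s^\alpha)=K_j(s)-iL_j(s)$ (with the $-iL_j$ term coming from the Poisson-kernel part of the limit, and the Jacobian factor $\alpha s^{\alpha-1}$ from $d(r^\alpha)/dr$ at $r=s$), and then feed $a=K_1-iL_1$, $b=K_2-iL_2$, $c=K_0-iL_0$ into Lemma~\ref{lm:complex} and take imaginary parts of $\phi_4=\phi_3-\phi_1\phi_2/\phi_0$. Your additional remark that the boundary denominator $K_0-iL_0$ never vanishes because $L_0(s)=\cos\theta/(\alpha s^{\alpha-1})>0$ is a useful piece of book-keeping that the paper leaves implicit; otherwise the two arguments coincide.
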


\begin{proof}
Note that $h_j$, $j = 0, 1, 2, 3$, are continuously differentiable functions on $(0, \infty)$. Thus, $\phi_j$, $j = 0, 1, 2, 3$, extend to continuous functions in the upper complex half-plane, except possibly at $0$; we denote these extensions again by $\phi_j$. Furthermore, by Sokhozki's formula, for $j=0,1,2,3$ and $s > 0$ we have
\[
\begin{aligned}
\phi_j(-s^\alpha) & = \lim_{\varepsilon \rightarrow 0^+}\phi_j(-s^\alpha+i\varepsilon)= \lim_{\varepsilon \rightarrow 0^+}\frac{1}{\pi}\int_0^\infty  \frac{h_j(r)}{r^\alpha-s^\alpha + i\varepsilon} \, dr \\
 &= \lim_{\varepsilon \rightarrow 0^+} \frac{1}{\pi}\int_0^\infty\frac{r^\alpha-s^\alpha}{(r^\alpha-s^\alpha)^2+\varepsilon^2} \, h_j(r)dr- \lim_{\varepsilon \rightarrow 0^+}\frac{1}{\pi}\int_0^\infty\frac{i\varepsilon}{(r^\alpha-s^\alpha)^2+\varepsilon^2} \, h_j(r)dr\\
 & = \frac{1}{\pi} \pvint_0^\infty \frac{h_j(r)dr}{r^\alpha-\lambda(s)}-\frac{i}{\alpha s^{\alpha-1}}\frac{\pi h_j(s)}{\pi}=K_j(s)-iL_j(s);
\end{aligned}
\]
see~\cite{vladimirov}, Section 1.8, or a similar calculation in the proof of Lemma 4.3 in~\cite{symzero1}. Hence, we can express $\phi_4(-s^\alpha)$ as
\begin{equation}
\label{exp:phinegative}
\phi_4(-s^\alpha)= K_3(s)-i L_3(s)-\frac{(K_1(s)- iL_1(s)) (K_2(s)-iL_2(s))}{K_0(s)-iL_0(s)}.
\end{equation}
Now, since $K_j$ and $L_j$ are real-valued for $j = 0, 1, 2, 3$, the desired result follows by Lemma~\ref{lm:complex}.
\end{proof}

The following estimates will be used to prove square-integrability of $s \phi_4(-s^2)$ for the application of Theorem~\ref{thm:branges}.
\begin{lemma}
\label{lm:square:Kfg}
With the above notation and assumptions, there is a constant $C$ such that for $j = 1, 2, 3$ we have
\begin{equation}
|K_j(s)|\leqslant \begin{cases} C s^{1 - \alpha} & \text{if $0 < s < 1$,} \\ C s^{-\alpha} \log(1 + s) & \text{if $s \geqslant 1$} \end{cases}
\end{equation}
\end{lemma}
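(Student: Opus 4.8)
The plan is to deduce this from Lemma~\ref{lm:square:Kfg0}. Observe that $K_j(s) = \frac{1}{\pi} \pvint_0^\infty \frac{h_j(r)}{r^\alpha - s^\alpha} \, dr$ is exactly $\frac{1}{\pi}$ times the integral $K(s)$ appearing in Lemma~\ref{lm:square:Kfg0} with $h = h_j$. Thus all that is needed is to verify that each of the functions $h_1$, $h_2$, $h_3$ satisfies the two hypotheses of that lemma, namely $|h_j(r)| \le c_1 \min\{1, r^{-1}\}$ and $|h_j'(r)| \le c_2 \min\{1, r^{-2}\}$ for $r > 0$. The constants $c_1$, $c_2$, and hence the constant $C$ in the conclusion, will depend on $f$ and $g$, which is consistent with the statement.

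To check the hypotheses I would argue as follows. Since $\alpha \in (1, 2)$, we have $|\theta| \le \tfrac{\pi}{\alpha} - \tfrac{\pi}{2} < \tfrac{\pi}{2}$, so there is $\varepsilon > 0$ with $|\theta| \le \tfrac{\pi}{2} - \varepsilon$. For $r > 0$ the point $z = -i r e^{i\theta}$ satisfies $\arg(iz) = \theta$, while $w = i r e^{i\theta}$ satisfies $\arg(-iw) = \theta$; in both cases the argument has absolute value at most $\tfrac{\pi}{2} - \varepsilon$, so Lemmas~\ref{lm:7} and~\ref{lm:7p} apply. They give $|\laplace f(-i r e^{i\theta})| \le C \min\{1, r^{-1}\}$, $|\laplace g(i r e^{i\theta})| \le C \min\{1, r^{-1}\}$, and the corresponding bounds with exponent $-2$ for $(\laplace f)'$ and $(\laplace g)'$ at the same points.

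From here the bound on $h_j$ is immediate for $j = 1, 2$, and for $j = 3$ it follows from $|h_3(r)| \le |\laplace f(-i r e^{i\theta})| \, |\laplace g(i r e^{i\theta})| \le C^2 \min\{1, r^{-2}\}$. For the derivatives, the chain rule gives $h_1'(r) = \Re\bigl(-i e^{2 i \theta} (\laplace f)'(-i r e^{i\theta})\bigr)$ and likewise for $h_2'$, while the product rule gives $h_3'(r) = \Re\bigl(-i e^{2 i \theta} (\laplace f)'(-i r e^{i\theta}) \laplace g(i r e^{i\theta}) + i e^{2 i \theta} \laplace f(-i r e^{i\theta}) (\laplace g)'(i r e^{i\theta})\bigr)$; in each case the desired bound $|h_j'(r)| \le c_2 \min\{1, r^{-2}\}$ follows. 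Applying Lemma~\ref{lm:square:Kfg0} to $h_1$, $h_2$, $h_3$ then finishes the argument. I do not expect any genuine obstacle here: the only subtlety is that one must use the strict inequality $|\theta| < \tfrac{\pi}{2}$ to fit the rotated rays into the sectors of Lemmas~\ref{lm:7} and~\ref{lm:7p}, and the rest is routine bookkeeping of constants.
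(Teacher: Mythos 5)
Your proof is correct and follows essentially the same route as the paper: verify the two hypotheses of Lemma~\ref{lm:square:Kfg0} for each $h_j$ via Lemmas~\ref{lm:7} and~\ref{lm:7p}, and then apply that lemma (the $\tfrac{1}{\pi}$ factor being harmless). The paper even notes the stronger decay $|h_3(r)| \le C\min\{1, r^{-2}\}$, $|h_3'(r)| \le C\min\{1, r^{-3}\}$, but like you it only needs the weaker hypotheses to invoke the lemma; your explicit check of the sector condition and of the chain/product rule for $h_j'$ is merely spelling out what the paper leaves implicit.
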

\begin{proof}
By Lemmas~\ref{lm:7} and~\ref{lm:7p}, the functions $h_1$ and $h_2$ satisfy the assumptions of Lemma~\ref{lm:square:Kfg0}, and consequently
\[
 |K_j(s)| \le \begin{cases} C s^{1 - \alpha} & \text{if $0 < s < 1$,} \\ C s^{-\alpha} \log(1 + s) & \text{if $s \geqslant 1$} \end{cases} 
\]
for $j = 1, 2$. Similarly, again by Lemmas~\ref{lm:7} and~\ref{lm:7p}, we have $|h_3(r)| \le C \min\{1, r^{-2}\}$ and $|h_3'(r)| \le C \min\{1, r^{-3}\}$, so Lemma~\ref{lm:square:Kfg0} applies also $h_3$, leading to the desired bound for $K_3$.
\end{proof}

\begin{lemma}
\label{lem:phi4}
With the above notation and assumptions, there is a constant $C$ such that
\begin{equation}
|\phi_4(-\lambda)| \leqslant \begin{cases} C \lambda^{1/\alpha - 1} & \text{if $0 < \lambda < 1$,} \\ C \lambda^{-1} (\log(1 + \lambda))^2 & \text{if $\lambda \geqslant 1$} \end{cases}
\end{equation}
\end{lemma}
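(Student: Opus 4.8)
The plan is to start from the explicit boundary formula~\eqref{exp:phinegative} for $\phi_4(-s^\alpha)$, estimate each of its constituents separately, and then substitute $\lambda = s^\alpha$. From
\[
\phi_4(-s^\alpha) = K_3(s) - i L_3(s) - \frac{(K_1(s) - i L_1(s))(K_2(s) - i L_2(s))}{K_0(s) - i L_0(s)}
\]
and the triangle inequality we get
\[
|\phi_4(-s^\alpha)| \le |K_3(s)| + |L_3(s)| + \frac{(|K_1(s)| + |L_1(s)|)(|K_2(s)| + |L_2(s)|)}{|K_0(s) - i L_0(s)|},
\]
so it suffices to bound the $K_j(s)$, the $L_j(s)$, and a lower bound for $|K_0(s) - i L_0(s)|$.

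For the numerators, Lemma~\ref{lm:square:Kfg} already gives $|K_j(s)| \le C s^{1-\alpha}$ for $0 < s < 1$ and $|K_j(s)| \le C s^{-\alpha} \log(1+s)$ for $s \ge 1$, for $j = 1, 2, 3$. For the $L_j$, recall $L_j(s) = h_j(s)/(\alpha s^{\alpha-1})$; by Lemmas~\ref{lm:7} and~\ref{lm:7p} we have $|h_j(s)| \le C \min\{1, s^{-1}\}$ for $j = 1, 2$ and $|h_3(s)| \le C \min\{1, s^{-2}\}$, hence $|L_j(s)| \le C s^{1-\alpha} \min\{1, s^{-1}\}$ for $j = 1,2$ and $|L_3(s)| \le C s^{1-\alpha} \min\{1, s^{-2}\}$. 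In every case these are dominated by the corresponding $K_j$--bound (for $s \ge 1$ one uses $s^{-1} \le 1 \le \log(1+s)/\log 2$), so that $|K_j(s)| + |L_j(s)|$ is $O(s^{1-\alpha})$ for $0 < s < 1$ and $O(s^{-\alpha} \log(1+s))$ for $s \ge 1$, uniformly in $j \in \{1,2,3\}$.

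The only step requiring a moment's care — and the one I expect to be the crux — is the lower bound for the denominator. Here $K_0(s) = -\cot(\tfrac{\pi}{\alpha}) \cos\theta / (\alpha s^{\alpha-1})$ and $L_0(s) = \cos\theta / (\alpha s^{\alpha-1})$, so $K_0(s) - i L_0(s) = \tfrac{\cos\theta}{\alpha s^{\alpha-1}}(-\cot\tfrac{\pi}{\alpha} - i)$; since $\cos\theta > 0$ (because $|\theta| \le \tfrac{\pi}{\alpha} - \tfrac{\pi}{2} < \tfrac{\pi}{2}$), there is a constant $c > 0$, depending only on $\alpha$ and $\theta$, with $|K_0(s) - i L_0(s)| = c\, s^{1-\alpha}$ for all $s > 0$. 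This non-degeneracy is exactly what prevents the fraction from blowing up.

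Combining, for $0 < s < 1$ we get $|\phi_4(-s^\alpha)| \le C s^{1-\alpha} + (C s^{1-\alpha})^2/(c\, s^{1-\alpha}) \le C' s^{1-\alpha}$, while for $s \ge 1$ we get $|\phi_4(-s^\alpha)| \le C s^{-\alpha}\log(1+s) + (C s^{-\alpha}\log(1+s))^2/(c\, s^{1-\alpha}) \le C' s^{-\alpha}(\log(1+s))^2$, using $s^{-\alpha-1} \le s^{-\alpha}$ in the last step. Finally put $\lambda = s^\alpha$, i.e. $s = \lambda^{1/\alpha}$: then $s^{1-\alpha} = \lambda^{1/\alpha - 1}$, $s^{-\alpha} = \lambda^{-1}$, and for $\lambda \ge 1$ one has $\log(1 + \lambda^{1/\alpha}) \le \log(1+\lambda)$ since $\lambda^{1/\alpha} \le \lambda$. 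This yields precisely the two claimed estimates. Apart from invoking the technical bounds of Lemma~\ref{lm:square:Kfg} and the non-vanishing of $\cos\theta$, the argument is routine bookkeeping.
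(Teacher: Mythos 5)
Your proposal is correct and takes essentially the same route as the paper: decompose via \eqref{exp:phinegative}, bound $K_j$ by Lemma~\ref{lm:square:Kfg} and $L_j$ by Lemmas~\ref{lm:7}--\ref{lm:7p}, then divide by an explicit lower bound on the denominator and substitute $\lambda = s^\alpha$. The only cosmetic difference is that the paper bounds the denominator below by $|L_0(s)|$ alone, whereas you compute $|K_0(s) - iL_0(s)|$ exactly; the two differ only by the constant $1/\sin\tfrac{\pi}{\alpha}$.
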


\begin{proof}
For $s > 0$, we have
\[
\begin{aligned}
 |\phi_4(-s^\alpha)| & = \biggl|\phi_3(-s^\alpha) - \frac{\phi_1(-s^\alpha) \phi_2(-s^\alpha)}{\phi_0(-s^\alpha)}\biggr| \\
 & \leqslant |K_3(s)| + |L_3(s)| + \frac{(|K_1(s)| + |L_1(s)|) (|K_2(s)| + |L_2(s)|)}{|L_0(s)|} \, . 
\end{aligned}
\]
Recall that $L_0(s) = \cos \theta / (\alpha s^{\alpha - 1})$ for $s > 0$. By definition and Lemma~\ref{lm:7}, there is a constant $C_1$ such that if $s > 0$, then $|L_j(s)| \le C_1 s^{1 - \alpha} \min\{1, s^{-1}\}$ for $j = 1, 2$, and $|L_3(s)| \le C_1 s^{1 - \alpha} \min\{1, s^{-2}\}$. Similar estimates for $K_j(s)$, $j = 1, 2, 3$, are given in Lemma~\ref{lm:square:Kfg}. It follows that for some constant $C_2$ we have
\[
 |\phi_4(-s^\alpha)| \le C_2 s^{1 - \alpha} , \qquad 0 < s < 1 ,
\]
and
\[
 |\phi_4(-s^\alpha)| \le C_2 s^{-\alpha} (\log(1 + s))^2 , \qquad s \geqslant 1 ,
\]
as desired.
\end{proof}

We are now ready to apply Theorem~\ref{thm:branges}.

\begin{theorem} Let $f,g\in\Ha$ and $t > 0$. Then, with the above notation,
\label{thm:imagine}
\begin{equation}
\label{eq:imagine}
\int_{-\infty}^\infty \int_{-\infty}^\infty f(x) g(y) p^0_t(x, y) dx dy = -\frac{1}{\pi} \int_0^\infty e^{-r t} \im \phi_4(-r) dr ,
\end{equation}
where $\im \phi_4$ is given by Lemma~\ref{lem:imaginaryPart}.
\end{theorem}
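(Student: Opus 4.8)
The strategy is to combine Lemma~\ref{lm:deformacja}, which expresses the triple Laplace transform (in $t$) of the quantity $J(t) = \int_{-\infty}^\infty \int_{-\infty}^\infty f(x) g(y) p^0_t(x, y) dx dy$ as $\phi_4(\lambda)$ for $\lambda > 0$, with the Stieltjes-type representation of $\phi_4$ coming from Corollary~\ref{cor:branges} (equivalently Theorem~\ref{thm:branges}). Once we know
\[
 \phi_4(\lambda) = -\frac{1}{\pi} \int_0^\infty \frac{\im \phi_4(-r)}{r + \lambda} \, dr , \qquad \lambda > 0 ,
\]
we recognise the right-hand side as the Laplace transform (in $\lambda$) of the function $t \mapsto -\frac{1}{\pi} \int_0^\infty e^{-r t} \im \phi_4(-r) dr$, since $1/(r+\lambda) = \int_0^\infty e^{-\lambda t} e^{-r t} dt$ and Fubini applies. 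On the other hand, $\phi_4(\lambda) = \int_0^\infty e^{-\lambda t} J(t) dt$ by Lemma~\ref{lm:deformacja}. By uniqueness of the Laplace transform (both $J$ and the candidate right-hand side being, say, continuous and of at most polynomial growth), the two functions of $t$ coincide, which is exactly~\eqref{eq:imagine}.

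\textbf{Carrying this out.}
First I would apply Corollary~\ref{cor:branges} to the function $f = \phi_4$. Its hypotheses must be verified: $\phi_4$ is analytic in $\C \setminus (-\infty, 0]$ and real-valued on $(0, \infty)$ (clear from the definition of the $\phi_j$ via $h_j$ real); the function $\lambda \mapsto \lambda \phi_4(-\lambda^2)$ lies in $\mathscr{N}^+$ by Lemma~\ref{lm:bounded.type}; and the integrability $\int_0^\infty |\phi_4(-s^2)|^p ds < \infty$ for some $p \in (1, \infty)$ follows from the bounds in Lemma~\ref{lem:phi4}: near $s = 0$ we have $|\phi_4(-s^2)| \le C s^{2/\alpha - 2}$, which is in $L^p$ near $0$ provided $p(2 - 2/\alpha) < 1$, i.e. $p < \alpha/(2(\alpha-1))$, and since $\alpha < 2$ this allows some $p > 1$; near $s = \infty$ we have $|\phi_4(-s^2)| \le C s^{-2} (\log(1+s))^2$, which is in $L^p$ for every $p \ge 1$. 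So a valid $p \in (1, \infty)$ exists. Corollary~\ref{cor:branges} then yields the Cauchy--Stieltjes representation of $\phi_4$ displayed above, with $\im \phi_4(-r)$ the boundary value computed in Lemma~\ref{lem:imaginaryPart} (after the substitution $r = s^\alpha$, noting $dr = \alpha s^{\alpha - 1} ds$, although it is cleaner to keep the variable $r$ throughout and only invoke Lemma~\ref{lem:imaginaryPart} for the explicit formula).

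\textbf{The Laplace inversion and the main obstacle.}
With the representation in hand, write $\frac{1}{r + \lambda} = \int_0^\infty e^{-(r+\lambda)t} dt$ and interchange the order of integration; this is legitimate because $r \mapsto \im \phi_4(-r)$ is absolutely integrable against $1/(r+\lambda)$ for each fixed $\lambda > 0$ (again by Lemma~\ref{lem:phi4}, $|\im \phi_4(-r)| \le |\phi_4(-r)|$ is $O(r^{1/\alpha - 1})$ near $0$ and $O(r^{-1}(\log(1+r))^2)$ at infinity, so $\int_0^\infty |\im\phi_4(-r)|/(r+\lambda) dr < \infty$), and the integrand $e^{-(r+\lambda)t}|\im\phi_4(-r)|$ is then integrable over $(r,t) \in (0,\infty)^2$. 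This shows $\phi_4(\lambda) = \int_0^\infty e^{-\lambda t} \bigl(-\frac{1}{\pi}\int_0^\infty e^{-rt}\im\phi_4(-r)dr\bigr) dt$. Comparing with $\phi_4(\lambda) = \int_0^\infty e^{-\lambda t} J(t) dt$ from Lemma~\ref{lm:deformacja}, both valid for all $\lambda > 0$, uniqueness of the (one-sided) Laplace transform forces $J(t) = -\frac{1}{\pi}\int_0^\infty e^{-rt}\im\phi_4(-r)dr$ for almost every $t > 0$; since $J$ is continuous in $t$ (by dominated convergence, using $0 \le p^0_t(x,y) \le p_t(y-x)$ and continuity of $p_t$) and the right-hand side is continuous in $t$ as well, the identity holds for every $t > 0$. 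The main obstacle is the bookkeeping around Corollary~\ref{cor:branges}: one must be careful that the boundary value $\phi_4(-r)$ used there (limit from the upper half-plane in the variable $z$, via $z = -s^2$) matches the boundary value $\phi_j(-s^\alpha) = \lim_{\eps\to 0^+}\phi_j(-s^\alpha + i\eps)$ used in Lemma~\ref{lem:imaginaryPart} — i.e. that the two notions of ``approaching $(-\infty,0)$ from above'' agree — and that the non-negativity/growth hypotheses are genuinely met so that no boundary term or singular (point-mass at $0$, or constant) component is lost in the Stieltjes representation; the vanishing of the constant $c_2$ and the mass $c_1$ at $0$ is ensured by $\phi_4(\lambda) \to 0$ as $\lambda \to \infty$ and $\lambda\phi_4(\lambda)\to 0$ as $\lambda\to\infty$, both consequences of Lemma~\ref{lem:phi4}.
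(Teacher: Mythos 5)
Your proof follows essentially the same route as the paper's: combine Lemma~\ref{lm:deformacja} with the Cauchy--Stieltjes representation from Corollary~\ref{cor:branges}, then invert the Laplace transform via Fubini and uniqueness, finishing by continuity in $t$. The only technical difference is that you apply Corollary~\ref{cor:branges} directly to $f=\phi_4$ (correctly noting that some $p\in(1,\alpha/(2(\alpha-1)))$ works), whereas the paper applies it to $f(z)=\sqrt{z}\,\phi_4(z)$, for which the cleaner choice $p=2$ suffices; both are fine. One small point to make explicit: with $f=\phi_4$, the hypothesis of Corollary~\ref{cor:branges} is that $\phi_4(-\xi^2)\in\mathscr{N}^+$, while Lemma~\ref{lm:bounded.type} gives $\xi\,\phi_4(-\xi^2)\in\mathscr{N}^+$; you should add the one-line deduction that $\phi_4(-\xi^2)=(\xi\,\phi_4(-\xi^2))/\xi$ lies in $\mathscr{N}^+$ because $\xi=(\sqrt{\xi})^2$ is outer (Lemmas~\ref{lm:id.is.N} and~\ref{lm:sum.is.N}). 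Your closing worry about losing a point mass or constant is unnecessary: Corollary~\ref{cor:branges} already gives the exact representation with no $c_1/z$ or $c_2$ terms once its hypotheses hold.
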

\begin{proof}
Recall that by Lemma~\ref{lm:deformacja},
\[
\int_0^\infty \int_{-\infty}^\infty \int_{-\infty}^\infty e^{-\lambda t} f(x) g(y) p^0_t(x, y) dx dy dt = \phi_4(\lambda), \qquad \lambda > 0 .
\]
We verify that the function $\sqrt{\lambda} \phi_4(\lambda)$ satisfies the assumptions of Corollary~\ref{cor:branges}. By Lemma~\ref{lm:bounded.type}, the function $\lambda \phi_4(\lambda^2)$ is in the Nevanlinna class $\mathscr{N}^+$. By Lemma~\ref{lem:imaginaryPart}, this function extends continuously to the closed upper complex half-plane, except possibly at $\lambda = 0$, and, by Lemma~\ref{lem:phi4}, this extension satisfies
\[
|\sqrt{\lambda} \phi_4(-\lambda)| \leqslant \begin{cases} C \lambda^{1/\alpha - 1/2} & \text{if $0 < \lambda < 1$,} \\ C \lambda^{-1/2} (\log(1 + \lambda))^2 & \text{if $\lambda \geqslant 1$.} \end{cases}
\]
In particular,
\[
\int_0^\infty |\lambda \phi_4(-\lambda^2)|^2 d\lambda \le C^2 \int_0^1 \lambda^{2/\alpha - 1} d\lambda + C^2 \int_1^\infty \lambda^{-2} (\log(1 + \lambda^2))^4 d\lambda < \infty .
\]
Therefore, the assumptions of Corollary~\ref{cor:branges} are satisfied.

We conclude that for $\lambda \in \C \setminus (-\infty, 0]$,
\[
\sqrt{\lambda} \phi_4(\lambda) = \frac{1}{\pi} \int_0^\infty \frac{\sqrt{\lambda}}{\sqrt{r}} \, \frac{\re (i \sqrt{r} \phi_4(-r))}{r + \lambda} \, dr = -\frac{\sqrt{\lambda}}{\pi} \int_0^\infty \frac{\im \phi_4(-r)}{r + \lambda} \, dr ,
\]
which implies that for $\lambda > 0$,
\[
\int_0^\infty \int_{-\infty}^\infty \int_{-\infty}^\infty e^{-\lambda t} f(x) g(y) p^0_t(x, y) dx dy dt = -\frac{1}{\pi} \int_0^\infty \int_0^\infty \im \phi_4(-r) e^{-r t} e^{-\lambda r} dt dr .
\]
The desired result for almost every $t > 0$ follows by Fubini's theorem and uniqueness of Laplace transforms. Extension to all $t > 0$ is a consequence of continuity. Indeed, the right-hand side of~\eqref{eq:imagine} is clearly continuous in $t > 0$. Continuity of the left-hand side results from integrability of $f(x) g(y)$ with respect to $x, y \in \RR$, and continuity of $t \mapsto p^0_t(\cdot, \cdot)$ on $(0, \infty)$ with respect to the topology of uniform convergence. 
\end{proof}
\subsection{Generalised eigenfunction expansion of $P^0_t$}
Our goal in this section is to express $\im \phi_4(-\lambda)$ (see Lemma~\ref{lem:imaginaryPart}) in terms of $f$ and $g$ rather than the Laplace transforms of $f$ and $g$. This result, combined with Theorem~\ref{thm:imagine}, will prove Theorem~\ref{thm:1}. We use the notation introduced in Sections~\ref{sec:contour} and~\ref{sec:cauchy}.

The expression for $\im \phi_4(-\lambda)$ in Lemma~\ref{lem:imaginaryPart}:
\[
\begin{aligned}
\Im\phi_4(-s^\alpha) & = -L_3(s)+\frac{ L_1(s)L_2(s)}{L_0(s)}\\
 & - \biggl(\Im\frac{1}{K_0(s)-iL_0(s)}\biggr)^{-1} \Im\frac{K_1(s)- iL_1(s)}{K_0(s)-iL_0(s)} \, \Im \frac{ K_2(s)-iL_2(s)}{K_0(s)-iL_0(s)} ,
\end{aligned}
\]
has two parts. The former one is expanded in Lemma~\ref{lm:abc}; the latter one is more involved and it is studied in Lemma~\ref{lm:oddeformuj2}, after a number of auxiliary results.

\begin{lemma} 
\label{lm:abc}
With the above assumptions and notation,
\[
\begin{aligned}
& L_3(s)-\frac{L_1(s)L_2(s)}{L_0(s)}
\\ & \qquad = \frac{1}{\alpha s^{\alpha-1}\cos\theta} \biggl( \int_{-\infty}^\infty e^{-sx\sin\theta} \sin(sx\cos\theta)f(x)dx\biggr) \biggl( \int_{-\infty}^\infty e^{sy\sin\theta}\sin(sy\cos\theta)g(y)dy\biggr).
\end{aligned}
\]
\end{lemma}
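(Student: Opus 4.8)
The plan is to unpack the definitions of $L_0, L_1, L_2, L_3$ and recognise the right-hand side as a product of two Laplace-transform-type integrals. Recall that $L_j(s) = h_j(s)/(\alpha s^{\alpha-1})$, so that
\[
L_3(s) - \frac{L_1(s) L_2(s)}{L_0(s)} = \frac{1}{\alpha s^{\alpha-1}} \left( h_3(s) - \frac{h_1(s) h_2(s)}{h_0(s)} \right) = \frac{1}{\alpha s^{\alpha-1} \cos\theta} \bigl( h_3(s) \cos\theta - h_1(s) h_2(s) \bigr).
\]
Thus the entire statement reduces to the algebraic identity
\[
h_3(s) \cos\theta - h_1(s) h_2(s) = \left( \int_{-\infty}^\infty e^{-sx\sin\theta}\sin(sx\cos\theta) f(x)\,dx \right)\left( \int_{-\infty}^\infty e^{sy\sin\theta}\sin(sy\cos\theta) g(y)\,dy \right),
\]
where $h_0(s) = \cos\theta$, $h_1(s) = \Re(e^{i\theta}\laplace f(-ise^{i\theta}))$, $h_2(s) = \Re(e^{i\theta}\laplace g(ise^{i\theta}))$, and $h_3(s) = \Re(e^{i\theta}\laplace f(-ise^{i\theta})\laplace g(ise^{i\theta}))$.

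Next I would introduce the shorthand $a = e^{i\theta}\laplace f(-ise^{i\theta})$ and $b = e^{i\theta}\laplace g(ise^{i\theta})$, so that $h_1 = \Re a$, $h_2 = \Re b$. The key observation is that $e^{i\theta}\laplace f(-ise^{i\theta})\laplace g(ise^{i\theta}) = e^{-i\theta} a b$, because $e^{i\theta}\cdot e^{i\theta} = e^{2i\theta}$ while each factor carries one $e^{i\theta}$, so $e^{i\theta}\laplace f \cdot \laplace g = e^{-i\theta}(e^{i\theta}\laplace f)(e^{i\theta}\laplace g)$. Hence $h_3(s) = \Re(e^{-i\theta} a b) = \cos\theta\,\Re(ab) + \sin\theta\,\Im(ab)$. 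Using $\Re(ab) = \Re a\,\Re b - \Im a\,\Im b$ and $\Im(ab) = \Re a\,\Im b + \Im a\,\Re b$, a short computation gives
\[
h_3(s)\cos\theta - h_1(s) h_2(s) = -\cos^2\theta\,\Im a\,\Im b + \sin\theta\cos\theta(\Re a\,\Im b + \Im a\,\Re b).
\]
One then checks this factors as $(\cos\theta\,\Im a + \sin\theta\,\Re a)(\sin\theta\,\Re b - \cos\theta\,\Im b) = -\Im(e^{-i\theta} a)\,\Im(e^{i\theta} b)$; indeed expanding the product and comparing term by term confirms it. So $h_3(s)\cos\theta - h_1(s)h_2(s) = -\Im(e^{-i\theta}a)\,\Im(e^{i\theta}b) = \Im(e^{-i\theta}a)\cdot(-\Im(e^{i\theta}b))$.

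Finally I would identify these two imaginary parts with the claimed integrals. We have $e^{-i\theta} a = \laplace f(-ise^{i\theta}) = \int_{-\infty}^\infty f(x) e^{ise^{i\theta}x}\,dx = \int_{-\infty}^\infty f(x) e^{isx\cos\theta} e^{-sx\sin\theta}\,dx$, using $ise^{i\theta} = is(\cos\theta + i\sin\theta) = -s\sin\theta + is\cos\theta$. Taking imaginary parts (and using that $f$ is real-valued) yields $\Im(e^{-i\theta}a) = \int_{-\infty}^\infty e^{-sx\sin\theta}\sin(sx\cos\theta) f(x)\,dx$. Similarly $e^{i\theta}b = e^{2i\theta}\laplace g(ise^{i\theta})$; here one must be slightly careful — actually it is cleaner to compute $-\Im(e^{i\theta}b) = \Im(\overline{e^{i\theta}b})$ via $\laplace g(ise^{i\theta}) = \int g(y) e^{-ise^{i\theta}y}\,dy = \int g(y) e^{-isy\cos\theta} e^{sy\sin\theta}\,dy$, so that $e^{i\theta}b = e^{2i\theta}\int g(y)e^{sy\sin\theta}e^{-isy\cos\theta}\,dy$, whence $-\Im(e^{i\theta}b) = \int_{-\infty}^\infty e^{sy\sin\theta}\sin(sy\cos\theta)g(y)\,dy$ once one tracks the phase $e^{2i\theta}$ correctly — I expect this bookkeeping with the extra factor $e^{2i\theta}$ versus the factorisation in terms of $e^{i\theta}b$ to be the one place where sign and phase errors can creep in, so I would double-check it by instead writing $b = e^{-i\theta}\laplace g(ise^{i\theta})\cdot e^{2i\theta}$... in fact the robust route is to redo the factorisation so that it reads $h_3\cos\theta - h_1h_2 = \Im(e^{-i\theta}a)\cdot\Im(e^{-i\theta}\tilde b)$ with $\tilde b$ chosen to match $b$, avoiding the $e^{2i\theta}$ entirely. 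Once the two factors are matched to the stated integrals, multiplying through by $1/(\alpha s^{\alpha-1}\cos\theta)$ completes the proof. The absolute convergence of all integrals is guaranteed by $f,g \in \Ha$ together with Lemma~\ref{lm:7}, so no analytic subtleties arise; the entire argument is elementary complex arithmetic, and the main obstacle is purely the careful tracking of the phase factors $e^{\pm i\theta}$ and $e^{2i\theta}$.
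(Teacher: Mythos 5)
Your plan is essentially the paper's proof: unpack $L_j = h_j/(\alpha s^{\alpha-1})$, reduce to an identity in $h_0,\dots,h_3$, and finish with elementary complex arithmetic. However, the specific computation as you wrote it is broken in two places, both of which you half-anticipated. First, in expanding $h_3\cos\theta - h_1 h_2$ you dropped a term: since $h_3\cos\theta = \cos^2\theta\,\Re(ab) + \sin\theta\cos\theta\,\Im(ab)$ and $h_1 h_2 = \Re a\,\Re b$, after substituting $\Re(ab) = \Re a\,\Re b - \Im a\,\Im b$ one gets an additional $-\sin^2\theta\,\Re a\,\Re b$, not just $-\cos^2\theta\,\Im a\,\Im b$ plus the cross terms. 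Second, once that term is restored, the correct factorisation is
\[
h_3\cos\theta - h_1 h_2 = -\bigl(\cos\theta\,\Im a - \sin\theta\,\Re a\bigr)\bigl(\cos\theta\,\Im b - \sin\theta\,\Re b\bigr) = -\Im(e^{-i\theta}a)\,\Im(e^{-i\theta}b),
\]
not the mixed $-\Im(e^{-i\theta}a)\,\Im(e^{i\theta}b)$ you wrote (nor the product $(\cos\theta\,\Im a + \sin\theta\,\Re a)(\sin\theta\,\Re b - \cos\theta\,\Im b)$, which equals $-\Im(e^{i\theta}a)\,\Im(e^{-i\theta}b)$). Since $e^{-i\theta}a = \laplace f(-i s e^{i\theta})$ and $e^{-i\theta}b = \laplace g(i s e^{i\theta})$, this is simply $-\,b_1 b_2$ if one writes $b_1 = \Im\laplace f(-ise^{i\theta})$ and $b_2 = \Im\laplace g(ise^{i\theta})$, and the two imaginary parts identify directly with the integrals in the statement (the minus sign cancels because $b_2 = -\int e^{sy\sin\theta}\sin(sy\cos\theta)g(y)\,dy$).

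The source of your trouble is the choice of variables. By setting $a = e^{i\theta}\laplace f(-ise^{i\theta})$ you guarantee the clean relation $h_1 = \Re a$, but $h_3 = \Re(e^{-i\theta}ab)$ then carries a compensating phase, and the final factorisation is in terms of $\Im(e^{-i\theta}a) = \Im\laplace f(-ise^{i\theta})$ anyway, so the $e^{i\theta}$ twist buys nothing and invites exactly the sign errors you flagged. The paper's proof sidesteps this by working from the start with $a_1 = \Re\laplace f(-ise^{i\theta})$, $b_1 = \Im\laplace f(-ise^{i\theta})$ (and likewise $a_2, b_2$ for $g$), expressing each $L_j$ explicitly and then simplifying; the $\sin^2\theta\,\Re a\,\Re b$ and $\cos^2\theta\,\Im a\,\Im b$ terms combine via $\sin^2\theta + \cos^2\theta = 1$ without any factoring tricks. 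Your ``robust route'' remark is on target: redoing the computation in terms of $\laplace f(-ise^{i\theta})$ and $\laplace g(ise^{i\theta})$ directly, rather than their $e^{i\theta}$-twisted versions, is precisely what the paper does and is the way to make this airtight.
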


\begin{proof}
Recall that for $j = 0, 1, 2, 3$, $L_j(s) = h_j(s) / (\alpha s^{\alpha - 1})$, where
\[
\begin{aligned} h_0(s) & = \cos \theta, \qquad & h_1(s) & = \re (e^{i \theta} \laplace f(-i s e^{i \theta})) , \\ h_2(s) & = \re (e^{i \theta} \laplace g(i s e^{i \theta})) , \qquad & h_3(s) & = \re(e^{i \theta} \laplace f(-i s e^{i \theta}) \laplace g(i s e^{i \theta})). \end{aligned}
\]
For a fixed $s > 0$, denote $\Re\laplace f(-i s e^{i\theta}) = a_1$, $\Im\laplace f(-i s e^{i\theta}) = b_1$, $\Re\laplace g(i s e^{i\theta}) = a_2$, $\Im\laplace g(i s e^{i\theta}) = b_2$. We have
\[
\begin{aligned}
 L_1(s) & =\frac{\re(e^{i\theta}\laplace f(-ise^{i\theta}))}{\alpha s^{\alpha-1}} = \frac{a_1 \cos\theta - b_1 \sin\theta}{\alpha s^{\alpha-1}} \, , \\
 L_2(s) & =\frac{\re(e^{i\theta}\laplace g(ise^{i\theta}))}{\alpha s^{\alpha-1}}= \frac{a_2 \cos\theta - b_2 \sin\theta}{\alpha s^{\alpha-1}} \, , \\
 L_3(s) & =\frac{\re(e^{i\theta}\laplace f(-ise^{i\theta})\laplace g(ise^{i\theta}))}{\alpha s^{\alpha-1}} = \frac{(a_1 a_2 - b_1 b_2) \cos\theta - (a_1 b_2 + a_2 b_1) \sin \theta}{\alpha s^{\alpha-1}} \, .
\end{aligned}
\]
It follows that
\[\begin{aligned}
L_3(r)-\frac{L_1(r)L_2( r)}{L_0(r)} & = \frac{(a_1 a_2 - b_1 b_2) \cos\theta - (a_1 b_2 + a_2 b_1) \sin \theta}{\alpha s^{\alpha-1}} \\
& \qquad - \frac{(a_1 \cos\theta - b_1 \sin\theta) (a_2 \cos\theta - b_2 \sin\theta)}{\alpha s^{\alpha-1} \cos \theta} \\
& = -\frac{b_1 b_2 \cos\theta}{\alpha s^{\alpha-1}} - \frac{b_1 b_2 \sin^2 \theta)}{\alpha s^{\alpha-1} \cos \theta} = -\frac{b_1 b_2}{\alpha s^{\alpha-1} \cos \theta} \, .
\end{aligned}\]
Since
\[\begin{aligned}
b_1 & = \Im \laplace f(-ise^{i\theta}) = \int_{-\infty}^\infty e^{-sx\sin\theta} \sin(sx\cos\theta)f(x)dx, \\
b_2 & = \Im \laplace g(ise^{i\theta}) = -\int_{-\infty}^\infty e^{sy\sin\theta} \sin(sy\cos\theta)g(y)dy,
\end{aligned}\]
the proof is complete.
\end{proof}

Recall that every $f\in\Ha$ can be written as $f=f_++f_-$, where $f_+ = f \chi_{(0, \infty)} \in\Ha_+$ and $f_- = f \chi_{(-\infty, 0)} \in\Ha_-$. For the next result, we need the following variant of Sokhozki's formula.
\begin{lemma}
\label{lm:pv}
If $s > 0$, $\zeta = s e^{i \theta}$ or $\zeta = -s e^{-i \theta}$, $f\in\Ha$ and $f = f_+ + f_-$ as above, then
\begin{equation}\label{eq:pv}
\frac{1}{2 \pi i} \pvint_{(-e^{-i \theta} \infty, 0) \cup (0, e^{i \theta} \infty)} \frac{\laplace f(-i \xi)}{\xi - \zeta} \, d\xi = \frac{1}{2} \, \laplace f_+(-i \zeta) - \frac{1}{2} \, \laplace f_-(-i \zeta) .
\end{equation}
\end{lemma}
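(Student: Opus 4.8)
The plan is to split $f=f_++f_-$ with $f_+\in\Ha_+$ and $f_-\in\Ha_-$, treat the two summands separately, and in each case evaluate the principal-value integral by closing the contour and invoking Sokhozki's formula at the point $\zeta$, which by hypothesis is a smooth (non-corner) point of the contour $\gamma\mdef(-e^{-i\theta}\infty,0)\cup(0,e^{i\theta}\infty)$.

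First I would fix the geometry. Write $\gamma$ as the union of the two rays $\{re^{i\theta}:r>0\}$ and $\{re^{i(\pi-\theta)}:r>0\}$, oriented from $-e^{-i\theta}\infty$ through $0$ to $e^{i\theta}\infty$, and let $\Omega_+$ be the component of $\C\setminus\gamma$ lying to the left of $\gamma$ (it contains $i$) and $\Omega_-$ the component lying to its right (it contains $-i$). The analytic input is the estimate, due to \cite{specHL}, that for $g\in\Ha_+$ the Laplace transform $\laplace g$ is entire and, for every $\eps>0$, satisfies $|\laplace g(w)|\le C_\eps\min\{1,|w|^{-1}\}$ in the sector $|\arg w|\le\pi-\eps$; dually, for $g\in\Ha_-$ the same bound holds in the sector $|\arg(-w)|\le\pi-\eps$. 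Since $|\theta|<\tfrac{\pi}{2}$, the substitution $w=-i\xi$ maps $\overline{\Omega_+}$ into the sector $|\arg w|\le\tfrac{\pi}{2}+|\theta|$ and $\overline{\Omega_-}$ into the sector $|\arg(-w)|\le\tfrac{\pi}{2}+|\theta|$, both of half-angle strictly less than $\pi$; consequently $\laplace f_+(-i\xi)=O(\min\{1,|\xi|^{-1}\})$ on all of $\overline{\Omega_+}$ and $\laplace f_-(-i\xi)=O(\min\{1,|\xi|^{-1}\})$ on all of $\overline{\Omega_-}$. Restricting these bounds to $\gamma$ itself shows that the integrand $\laplace f(-i\xi)/(\xi-\zeta)$ is $O(|\xi|^{-2})$ at infinity and bounded near $\xi=0$, so the principal value in \eqref{eq:pv} is well defined, the only singularity being the simple pole at $\xi=\zeta$.

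Next, for the $\Ha_+$-summand, consider the Cauchy transform $\Phi_+(w)\mdef\frac{1}{2\pi i}\int_\gamma\frac{\laplace f_+(-i\xi)}{\xi-w}\,d\xi$ for $w\notin\gamma$. Closing $\gamma$ by a large circular arc contained in $\Omega_+$ — the arc's contribution tends to $0$ with its radius by the decay just established, and no indentation is needed at $\xi=0$ since the integrand is continuous there — the residue theorem, together with the facts that $\Omega_+$ lies to the left of the (thus positively oriented) closed contour and that $\laplace f_+$ is entire, gives $\Phi_+(w)=\laplace f_+(-iw)$ for $w\in\Omega_+$ and $\Phi_+(w)=0$ for $w\in\Omega_-$. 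Symmetrically, for the $\Ha_-$-summand, closing inside $\Omega_-$ (where $\gamma$ is negatively oriented) shows that $\frac{1}{2\pi i}\int_\gamma\frac{\laplace f_-(-i\xi)}{\xi-w}\,d\xi$ equals $0$ for $w\in\Omega_+$ and $-\laplace f_-(-iw)$ for $w\in\Omega_-$. Since $\laplace f_\pm$ are entire, these formulas extend continuously up to $\gamma$, and in particular the two one-sided limits of each Cauchy transform at $\zeta$ are the values displayed above.

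Finally, $\zeta$ lies on the smooth part of $\gamma$ (it equals $se^{i\theta}$ or $-se^{-i\theta}$ with $s>0$, hence $\zeta\ne0$), so Sokhozki's formula expresses the principal value as the arithmetic mean of the two one-sided boundary limits of the Cauchy transform. For the $\Ha_+$-summand this mean is $\tfrac12(\laplace f_+(-i\zeta)+0)=\tfrac12\laplace f_+(-i\zeta)$, and for the $\Ha_-$-summand it is $\tfrac12(0-\laplace f_-(-i\zeta))=-\tfrac12\laplace f_-(-i\zeta)$; adding the two contributions yields \eqref{eq:pv}. The main obstacle is the bookkeeping in the two preceding paragraphs: correctly identifying the components $\Omega_\pm$ of $\C\setminus\gamma$ and the orientation $\gamma$ induces on each of them, and checking that the super-exponential decay of $f_\pm$ at infinity — transported through the rotation $w=-i\xi$ and the constraint $|\theta|<\tfrac{\pi}{2}$ — really does produce decay of $\laplace f_+(-i\xi)$ throughout $\overline{\Omega_+}$ and of $\laplace f_-(-i\xi)$ throughout $\overline{\Omega_-}$, which is exactly what allows the contour to be closed.
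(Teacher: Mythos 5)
Your proof is correct and follows essentially the same route as the paper's: decompose $f=f_++f_-$, use the decay of $\laplace f_\pm(-i\xi)$ in the sectors flanking the contour to close it with a large arc, and apply Sokhozki's formula at the smooth boundary point $\zeta\ne 0$. The only organisational difference is that you compute the full Cauchy transform on both sides of $\gamma$ and then invoke Plemelj's mean-value relation, whereas the paper applies Sokhozki directly on the closed sector boundary $\Gamma_R$ and lets $R\to\infty$; your explicit appeal to the $\Ha_\pm$-specific decay estimate (valid in $|\arg w|\le\pi-\eps$, resp.\ $|\arg(-w)|\le\pi-\eps$) is, if anything, slightly more careful than the paper's citation of Lemma~\ref{lm:7}, whose stated region does not quite cover the full sector used.
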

\begin{proof}
Let $\Gamma_R$ denote the boundary of the circular sector 
\[
D_R = \{\xi \in \C : |\xi| \le R , \, |\arg (-i \xi)| \le \tfrac{\pi}{2} - \theta\},
\]
oriented in a counter-clockwise manner. Since $|\laplace f_+(-i \xi)| \le C \min\{1, |\xi|^{-1}\}$ in $D_R$ by Lemma~\ref{lm:7}, we have
\begin{equation}\label{eq:pv:aux}
\frac{1}{2 \pi i} \pvint_{(-e^{-i \theta} \infty, 0) \cup (0, e^{i \theta} \infty)} \frac{\laplace f_+(-i \xi)}{\xi - \zeta} \, d\xi = \lim_{R \to \infty} \frac{1}{2 \pi i} \pvint_{\Gamma_R} \frac{\laplace f_+(-i \xi)}{\xi - \zeta} \, d\xi .
\end{equation}
If $R > s$, then $\laplace f_+(-i \xi)$ is analytic in the neighbourhood of $D_R$ and $\zeta \in \Gamma_R$. Thus, by the usual Sokhozki's formula, the expression under the limit in the above equality is equal to $\tfrac{1}{2} \laplace f_+(-i \zeta)$.

A similar argument applies to $f_-$ rather than $f_+$, but here we need to consider the boundary $\Gamma_R$ of the circular sector 
\[
D_R = \{\xi \in \C : |\xi| \le R , \, |\arg (i \xi)| \le \tfrac{\pi}{2} + \theta\},
\]
oriented in a clockwise manner. Note that $|\laplace f_-(-i \xi)| \le C \min\{1, |\xi|^{-1}\}$ in $D_R$, and so we have a complete analogue of~\eqref{eq:pv:aux} for $f_-$. However, the expression under the limit in the right-hand side is now equal to $-\tfrac{1}{2} \laplace f_-(-i \zeta)$ (with a minus sign) due to clockwise orientation of $\Gamma_R$. The assertion of the lemma follows by combining the above results for $f_+$ and $f_-$.
\end{proof}

Observe that for $f \in \Ha$ and $r > 0$,
\[
 e^{i \theta} \laplace f(-ise^{i\theta}) = \int_{-\infty}^\infty e^{-s x \sin\theta} (\cos(s x \cos\theta + \theta) + i \sin(s x \cos \theta + \theta)) f(x)dx .
\]

\begin{lemma}
\label{lm:oddeformuj}
With the above assumptions and notation, for $s > 0$ we have
\begin{subequations}
\begin{equation}
K_1(s)= \frac{1}{\alpha s^{\alpha - 1}} \int_{-\infty}^\infty \left(\frac{\Gpos(sx)}{\sin \tfrac{\pi}{\alpha}} - e^{-sx\sin\theta}\sin(sx\cos\theta+\theta)\sign x \right)f(x)dx,
\end{equation}
\begin{equation}
K_2(s)= \frac{1}{\alpha s^{\alpha - 1}} \int_{-\infty}^\infty \left(\frac{\Gpos(-sx)}{\sin \tfrac{\pi}{\alpha}} - e^{sx\sin\theta}\sin(sx\cos\theta-\theta)\sign x \right)g(x)dx.
\end{equation}
\end{subequations}
\end{lemma}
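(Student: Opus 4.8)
The plan is to rewrite $K_1(s)$ as a contour integral along the ray‑pair $\Gamma = (-e^{-i\theta}\infty,0)\cup(0,e^{i\theta}\infty)$, along which $\psi$ takes positive real values, and then to split the resulting integrand by means of the Fourier transform of $\Gpos$ furnished by Proposition~\ref{pr1}.

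First I would use that $f$ is real‑valued, so that $h_1(r)=\Re(e^{i\theta}\laplace f(-ire^{i\theta}))=\tfrac12\bigl(e^{i\theta}\laplace f(-ire^{i\theta})+e^{-i\theta}\laplace f(ire^{-i\theta})\bigr)$. Substituting $\xi=re^{i\theta}$ in the first summand and $\xi=-re^{-i\theta}$ in the second — these are linear changes of variable, hence commute with the principal‑value limit — and using $\psi(e^{i\theta}r)=\psi(-e^{-i\theta}r)=r^\alpha$, one obtains
\[
K_1(s)=\frac{1}{2\pi}\,\pvint_\Gamma \frac{\laplace f(-i\xi)}{\psi(\xi)-s^\alpha}\,d\xi ,
\]
the excised poles lying at $\xi=se^{i\theta}$ and $\xi=-se^{-i\theta}$. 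Next, homogeneity of $\psi$ gives $\psi(\xi)-s^\alpha=s^\alpha(\psi(\xi/s)-1)$, and combining this with the formula for $\fourier\Gpos$ in Proposition~\ref{pr1} I would write
\[
\frac{1}{\psi(\xi)-s^\alpha}=\frac{1}{\alpha}\biggl(\frac{\fourier\Gpos(\xi/s)}{s^\alpha\sin\tfrac{\pi}{\alpha}}+\frac{1}{s^{\alpha-1}(e^{-i\theta}\xi-s)}-\frac{1}{s^{\alpha-1}(e^{i\theta}\xi+s)}\biggr),
\]
where the first term is analytic on $\Gamma$ (the poles of $\alpha/(\psi-1)$ cancel, as in the proof of Proposition~\ref{pr1}), while the two remaining terms are simple fractions carrying the singularities.

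For the $\fourier\Gpos$‑term I would deform $\Gamma$ back to $\RR$, which Lemma~\ref{lm:rotate} permits once one notes the decay of $\laplace f$ from Lemma~\ref{lm:7} and the bound $|\fourier\Gpos(\xi)|\le C\min\{1,|\xi|^{-1}\}$ off the imaginary axis (from the proof of Proposition~\ref{pr1}); then Plancherel's theorem, the scaling $\fourier\Gpos(\xi/s)=s\,\fourier[x\mapsto\Gpos(sx)](\xi)$, and the identity $\laplace f(-i\xi)=\fourier[x\mapsto f(-x)](\xi)$ convert this term into $\tfrac{1}{\alpha s^{\alpha-1}\sin(\pi/\alpha)}\int_{-\infty}^\infty \Gpos(sx)f(x)\,dx$. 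For the two simple fractions I would write $e^{-i\theta}\xi-s=e^{-i\theta}(\xi-se^{i\theta})$ and $e^{i\theta}\xi+s=e^{i\theta}(\xi+se^{-i\theta})$ and invoke Lemma~\ref{lm:pv} with $\zeta=se^{i\theta}$ and $\zeta=-se^{-i\theta}$ respectively; since $\laplace f_+(\bar z)=\overline{\laplace f_+(z)}$ and likewise for $f_-$, the two contributions combine into $-\tfrac{1}{\alpha s^{\alpha-1}}\Im\bigl(e^{i\theta}(\laplace f_+(-ise^{i\theta})-\laplace f_-(-ise^{i\theta}))\bigr)$, and because $\Im(e^{i\theta}e^{ise^{i\theta}x})=e^{-sx\sin\theta}\sin(sx\cos\theta+\theta)$ this equals $-\tfrac{1}{\alpha s^{\alpha-1}}\int_{-\infty}^\infty e^{-sx\sin\theta}\sin(sx\cos\theta+\theta)\sign(x)f(x)\,dx$. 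Adding the two parts yields the stated formula for $K_1(s)$. The formula for $K_2(s)$ then follows by running the same argument with $g$ and with $\laplace g(i\xi)$ in place of $\laplace f(-i\xi)$; equivalently, by applying the $K_1$‑identity to the reflected function $x\mapsto g(-x)\in\Ha$, the reflection turning $\Gpos(sx)$ into $\Gpos(-sx)$ and flipping the sign of $\theta$ inside the sine.

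The main obstacle is the careful bookkeeping of principal values and complex conjugates through the two changes of variable, and in particular the verification that the smooth $\fourier\Gpos$‑piece may legitimately be deformed back to the real line — this rests on the cancellation of the poles of $\alpha/(\psi(\xi)-1)$ on the rays of $\Gamma$ observed in the proof of Proposition~\ref{pr1}, together with the uniform decay of $\laplace f$ (and of $\laplace g$) and of $\fourier\Gpos$ in the sectors swept between $\Gamma$ and $\RR$.
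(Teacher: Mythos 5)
Your proposal is correct and follows essentially the same route as the paper: rewrite $K_1$ as a principal-value integral over the ray-pair $\Gamma$, split the kernel $1/(\psi(\xi)-s^\alpha)$ into the scaled $\fourier\Gpos$ piece plus the two simple fractions at the poles, handle the fractions via Lemma~\ref{lm:pv}, and deform the regular piece back to $\RR$ for Plancherel. The only cosmetic difference is that you obtain the decomposition by scaling Proposition~\ref{pr1} with $\xi\mapsto\xi/s$, whereas the paper writes down the pole-subtracted function $\Phi$ directly and then recognises it as $\fourier[x\mapsto\Gpos(sx)]/(\alpha s^{\alpha-1}\sin\tfrac{\pi}{\alpha})$; the two formulations coincide via $\fourier\Gpos(\xi/s)=s\,\fourier[\Gpos(s\,\cdot)](\xi)$.
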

\begin{proof}
Fix $s > 0$. By definition,
\[
\begin{aligned}
 K_1(s) & = \frac{1}{\pi}\pvint_0^\infty \frac{\re(e^{i\theta} \laplace f(-ire^{i\theta}))}{r^{\alpha}-s^\alpha} \, dr \\
 & = \frac{1}{2\pi} \pvint_0^\infty \frac{e^{i\theta} \laplace f(-ire^{i\theta})}{r^{\alpha}-s^\alpha} \, dr + \frac{1}{2 \pi} \pvint_0^\infty \frac{e^{-i\theta} \laplace f(ire^{-i\theta})}{r^\alpha-s^\alpha} \, dr \\
 & = \frac{1}{2 \pi} \pvint_{(-e^{-i \theta} \infty, 0) \cup (0, e^{i \theta} \infty)} \frac{\laplace f(-i \xi)}{\psi(\xi) - s^\alpha} \, d\xi .
\end{aligned}
\]
The function $\psi(\xi) - s^\alpha$ is meromorphic in $\C \setminus i \RR$, with two simple poles at $\xi = s e^{i \theta}$ and $\xi = -s e^{-i \theta}$. The corresponding residues are $1 / \psi'(s e^{i \theta}) = e^{i \theta} / (\alpha s^{\alpha - 1})$ and $1 / \psi'(-s e^{-i \theta}) = -e^{-i \theta} / (\alpha s^{\alpha - 1})$, respectively. Removing this poles leads to the identity
\begin{equation}\label{eq:k1:aux}
\begin{aligned}
 K_1(s) & = \frac{1}{2 \pi} \int_{(-e^{-i \theta} \infty, 0) \cup (0, e^{i \theta} \infty)} \Phi(\xi) \laplace f(-i \xi) d\xi \\
 & \hspace*{-1em} + \frac{1}{2 \pi} \pvint_{(-e^{-i \theta} \infty, 0) \cup (0, e^{i \theta} \infty)} \biggl(\frac{e^{i \theta}}{\alpha s^{\alpha - 1}} \, \frac{1}{\xi - s e^{i \theta}} - \frac{e^{-i \theta}}{\alpha s^{\alpha - 1}} \, \frac{1}{\xi + s e^{-i \theta}}\biggr) \laplace f(-i \xi) d\xi ,
\end{aligned}\end{equation}
where
\[
\begin{aligned}
 \Phi(\xi) & = \frac{1}{\psi(\xi) - s^\alpha} - \frac{e^{i \theta}}{\alpha s^{\alpha - 1}} \, \frac{1}{\xi - s e^{i \theta}} + \frac{e^{-i \theta}}{\alpha s^{\alpha - 1}} \, \frac{1}{\xi + s e^{-i \theta}} , \qquad \xi \in \C \setminus i \RR.
\end{aligned}
\]
The second term in the above expression for $K_1(s)$ is given by Lemma~\ref{lm:pv}: it is equal to
\[
\begin{aligned}
 & \frac{i e^{i \theta}}{2 \alpha s^{\alpha - 1}} \bigl(\laplace f_+(-i s e^{i \theta}) - \laplace f_-(-i s e^{i \theta})\bigr) - \frac{i e^{-i \theta}}{2 \alpha s^{\alpha - 1}} \bigl(\laplace f_+(i s e^{-i \theta}) - \laplace f_-(i s e^{-i \theta})\bigr) \\
 & \qquad = \frac{1}{\alpha s^{\alpha - 1}} \bigl(\im(e^{i \theta} \laplace f_-(-i s e^{i \theta})) - \im(e^{i \theta} \laplace f_+(-i s e^{i \theta}))\bigr)\\
 & \qquad = \frac{1}{\alpha s^{\alpha - 1}} \biggl(\int_{-\infty}^0 e^{s x \sin\theta} \sin(s x \cos\theta + \theta) f(x)dx - \int_0^\infty e^{s x \sin\theta} \sin(s x \cos\theta + \theta) f(x)dx\biggr)
\end{aligned}
\]
(in the last step we used the fact that $f_+ = f \chi_{(0, \infty)}$ and $f_- = f \chi_{(-\infty, 0)}$). To identify the first term in the right-hand side of~\eqref{eq:k1:aux}, recall that the function $\Gpos$ was defined so that
\[
 \fourier \Gpos(\xi) = \sin \tfrac{\pi}{\alpha} \biggl(\frac{\alpha}{\psi(\xi)-1} - \frac{1}{e^{-i \theta} \xi - 1} + \frac{1}{e^{i \theta} \xi + 1} \biggr), \qquad \xi \in \RR.
\]
Therefore, for $s > 0$, the Fourier transform of $\Gpos_s(x) = \Gpos(s x)$ is given by
\[
\begin{aligned}
 \fourier G_s^\uparrow(\xi) & = \frac{\sin \tfrac{\pi}{\alpha}}{s} \biggl(\frac{\alpha}{\psi(\xi/s)-1} - \frac{1}{e^{-i \theta} \xi / s - 1} + \frac{1}{e^{i \theta} \xi / s + 1} \biggr) \\
 & = s^{\alpha - 1} \sin \tfrac{\pi}{\alpha} \biggl(\frac{\alpha}{\psi(\xi) - s^\alpha} - \frac{e^{i \theta}}{s^{\alpha - 1}} \, \frac{1}{\xi - s e^{i \theta}} + \frac{e^{i \theta}}{s^{\alpha - 1}} \, \frac{1}{\xi + s e^{-i \theta}} \biggr)
\end{aligned}
\]
for $\xi \in \RR$. It follows that $\Phi(\xi) = \fourier G_s^\uparrow(\xi) / (\alpha s^{\alpha - 1} \sin \tfrac{\pi}{\alpha})$ for $\xi \in \RR$. Recall that $\Phi(\xi)$ is an analytic function in $\C \setminus i \RR$, bounded by $C \min\{1, |\xi|^{-1}\}$ (see the proof of Proposition~\ref{pr1}). Similarly, $\laplace f(-i \xi)$ is an analytic function in $\C \setminus i \RR$, bounded by $C \min\{1, |\xi|^{-1}\}$ in the sector $\{\xi \in \C : |\arg \xi| \le |\theta|\}$ (by Lemma~\ref{lm:7}). It follows that we can deform the contour of integration in the first term of the right-hand side of~\eqref{eq:k1:aux} to $\RR$, which leads to the integral
\[
\begin{aligned}
 \frac{1}{2 \pi} \int_{-\infty}^\infty \Phi(\xi) \laplace f(-i \xi) d\xi & = \frac{1}{2 \pi \alpha s^{\alpha - 1} \sin \tfrac{\pi}{\alpha}} \int_{-\infty}^\infty \fourier f(-\xi) \fourier G_s^\uparrow(\xi) d\xi \\
 & = \frac{1}{\alpha s^{\alpha - 1} \sin \tfrac{\pi}{\alpha}} \int_{-\infty}^\infty f(x) G_s^\uparrow(x) dx ;
\end{aligned}
\]
we used Plancherel's theorem in the last step. The desired expression for $K_1(s)$ follows. The expression for $K_2(s)$ is obtained from that for $K_1(s)$ by considering $f(x) = g(-x)$.
\end{proof}

\begin{lemma}
\label{lm:oddeformuj2}
With the above assumptions and notation, for $s > 0$ we have
\begin{subequations}
\begin{equation}\label{eq:kl0}
 \im \frac{1}{K_0(s) - i L_0(s)} = \frac{\alpha s^{\alpha - 1} \sin^2 \tfrac{\pi}{\alpha}}{\cos \theta} \, ,
\end{equation}
\begin{equation}\label{eq:kl1}
 \im \frac{K_1(s) - i L_1(s)}{K_0(s) - i L_0(s)} = \frac{\sin \tfrac{\pi}{\alpha}}{\cos \theta} \int_{-\infty}^\infty \bigl(\Gpos(s x) + e^{-s x \sin \theta} \cos(s x \cos \theta + \theta + \tfrac{\pi}{\alpha} \sign x)\bigr) f(x) dx,
\end{equation}
\begin{equation}\label{eq:kl2}
 \im \frac{K_2(s) - i L_2(s)}{K_0(s) - i L_0(s)} = \frac{\sin \tfrac{\pi}{\alpha}}{\cos \theta} \int_{-\infty}^\infty \bigl(\Gpos(-s y) + e^{s y \sin \theta} \cos(s y \cos \theta - \theta + \tfrac{\pi}{\alpha} \sign y)\bigr) g(y) dy.
\end{equation}
\end{subequations}
\end{lemma}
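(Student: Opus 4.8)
The three identities are proved by direct computation, starting from the closed forms of $K_0$ and $L_0$ and the representations of $K_1$, $K_2$ supplied by Lemma~\ref{lm:oddeformuj}. The first step is to put $K_0(s) - i L_0(s)$ into polar form. Since $K_0(s) = -\cot\tfrac{\pi}{\alpha}\cos\theta/(\alpha s^{\alpha-1})$, $L_0(s) = \cos\theta/(\alpha s^{\alpha-1})$, and $\cos\tfrac{\pi}{\alpha} + i \sin\tfrac{\pi}{\alpha} = e^{i\pi/\alpha}$, we get
\[
 K_0(s) - i L_0(s) = \frac{\cos\theta}{\alpha s^{\alpha-1}}\bigl(-\cot\tfrac{\pi}{\alpha} - i\bigr) = -\frac{\cos\theta}{\alpha s^{\alpha-1}\sin\tfrac{\pi}{\alpha}}\, e^{i\pi/\alpha} .
\]
Note that the imaginary part of this number equals $-L_0(s) = -\cos\theta/(\alpha s^{\alpha-1}) \ne 0$ because $|\theta| < \tfrac{\pi}{2}$, so all quotients appearing in the statement are well defined, and
\[
 \frac{1}{K_0(s) - i L_0(s)} = -\frac{\alpha s^{\alpha-1}\sin\tfrac{\pi}{\alpha}}{\cos\theta}\, e^{-i\pi/\alpha} .
\]
Taking the imaginary part gives \eqref{eq:kl0} immediately.

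For \eqref{eq:kl1}, I would multiply $K_1(s) - i L_1(s)$ by the reciprocal just computed. Writing out the imaginary part of $(K_1(s) - i L_1(s))\, e^{-i\pi/\alpha}$, namely $-K_1(s)\sin\tfrac{\pi}{\alpha} - L_1(s)\cos\tfrac{\pi}{\alpha}$, one obtains
\[
 \im\frac{K_1(s) - i L_1(s)}{K_0(s) - i L_0(s)} = \frac{\alpha s^{\alpha-1}\sin\tfrac{\pi}{\alpha}}{\cos\theta}\Bigl(K_1(s)\sin\tfrac{\pi}{\alpha} + L_1(s)\cos\tfrac{\pi}{\alpha}\Bigr) .
\]
Now I would substitute the formula for $K_1(s)$ from Lemma~\ref{lm:oddeformuj} together with the identity $\alpha s^{\alpha-1}L_1(s) = h_1(s) = \int_{-\infty}^\infty e^{-sx\sin\theta}\cos(sx\cos\theta+\theta)f(x)\,dx$, which is read off from the display for $e^{i\theta}\laplace f(-ise^{i\theta})$ preceding Lemma~\ref{lm:oddeformuj}. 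The $\Gpos(sx)$ terms combine with coefficient $1$, and for each sign of $x$ the trigonometric terms combine via
\[
 \cos\tfrac{\pi}{\alpha}\cos(sx\cos\theta+\theta) - \sin\tfrac{\pi}{\alpha}\sin(sx\cos\theta+\theta)\,\sign x = \cos\bigl(sx\cos\theta+\theta+\tfrac{\pi}{\alpha}\sign x\bigr)
\]
(the cosine addition formula when $x>0$, and the same after using that cosine is even when $x<0$). This produces exactly \eqref{eq:kl1}.

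Identity \eqref{eq:kl2} follows in the same fashion: one uses the expression for $K_2(s)$ from Lemma~\ref{lm:oddeformuj} and $\alpha s^{\alpha-1}L_2(s) = h_2(s) = \int_{-\infty}^\infty e^{sy\sin\theta}\cos(sy\cos\theta-\theta)g(y)\,dy$, and the analogous trigonometric identity with $\theta$ replaced by $-\theta$. Alternatively, \eqref{eq:kl2} is deduced from \eqref{eq:kl1} by the substitution $f(x)=g(-x)$, exactly as $K_2$ was obtained from $K_1$ in Lemma~\ref{lm:oddeformuj}.

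There is no real obstacle in this argument; it is essentially bookkeeping. The only points demanding care are the treatment of the $\sign x$ (resp.\ $\sign y$) factor inside the angle-addition formulas — checking that the phase shift emerges as $+\tfrac{\pi}{\alpha}\sign x$ in \eqref{eq:kl1} and as $-\theta+\tfrac{\pi}{\alpha}\sign y$ in \eqref{eq:kl2} — and keeping track of the overall sign when passing from $K_0(s) - i L_0(s)$ in polar form to its reciprocal.
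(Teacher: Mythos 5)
Your argument is correct and follows essentially the same route as the paper: compute $\im\frac{K_j-iL_j}{K_0-iL_0}$ from the closed forms of $K_0, L_0$, substitute the formulas for $K_1, L_1$ (resp.\ $K_2, L_2$), and collapse the trigonometric terms via the cosine addition/subtraction formula according to $\sign x$. The only cosmetic difference is that you put $K_0(s)-iL_0(s)$ in polar form $-\tfrac{\cos\theta}{\alpha s^{\alpha-1}\sin\tfrac{\pi}{\alpha}}e^{i\pi/\alpha}$, whereas the paper works with $K_0^2+L_0^2$ and the explicit formula $\im\tfrac{K_1-iL_1}{K_0-iL_0}=\tfrac{K_1L_0-K_0L_1}{K_0^2+L_0^2}$; both yield the same factor $\tfrac{\alpha s^{\alpha-1}\sin\tfrac{\pi}{\alpha}}{\cos\theta}(K_1\sin\tfrac{\pi}{\alpha}+L_1\cos\tfrac{\pi}{\alpha})$.
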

\begin{proof}
We fix $s > 0$. For simplicity, in this proof we write $K_j$ and $L_j$ rather than $K_j(s)$ and $L_j(s)$. Recall that $K_0 = -\cot \tfrac{\pi}{\alpha} \cos \theta / (\alpha s^{\alpha - 1})$ and $L_0 = \cos \theta / (\alpha s^{\alpha - 1})$. Thus,
\[
 K_0^2 + L_0^2 = \biggl(\frac{\cos \theta}{\alpha s^{\alpha - 1} \sin \tfrac{\pi}{\alpha}}\biggr)^2 , \qquad \im \frac{1}{K_0 - i L_0} = \frac{L_0}{K_0^2 + L_0^2} = \frac{\alpha s^{\alpha - 1} \sin^2 \tfrac{\pi}{\alpha}}{\cos \theta} \, ;
\]
formula~\eqref{eq:kl0} follows. Furthermore,
\[
 \im \frac{K_1 - i L_1}{K_0 - i L_0} = \frac{K_1 L_0 - K_0 L_1}{K_0^2 + L_0^2} = \frac{\alpha s^{\alpha - 1} \sin \tfrac{\pi}{\alpha}}{\cos \theta} \, (K_1 \sin \tfrac{\pi}{\alpha} + L_1 \cos \tfrac{\pi}{\alpha}) . 
\]
The expressions for $K_1$ is given in Lemma~\ref{lm:oddeformuj}, while $L_1$ is given by
\[
 L_1 = \frac{\re(e^{i \theta} \laplace f(-i s e^{i \theta}))}{\alpha s^{\alpha - 1}} = \frac{1}{\alpha s^{\alpha - 1}} \int_{-\infty}^\infty e^{-s x \sin \theta} \cos(s x \cos \theta + \theta) f(x) dx . 
\]
It follows that
\[
 \begin{aligned} \im \frac{K_1 - i L_1}{K_0 - i L_0} = \frac{\sin \tfrac{\pi}{\alpha}}{\cos \theta} \int_{-\infty}^\infty \bigl(\Gpos(s x) & - e^{-s x \sin \theta} \sin(s x \cos \theta + \theta) \sin \tfrac{\pi}{\alpha} \sign x \\ & \qquad + e^{-s x \sin \theta} \cos(s x \cos \theta + \theta) \cos \tfrac{\pi}{\alpha}\bigr) f(x) dx . \end{aligned} 
\]
This proves~\eqref{eq:kl1}. Formula~\eqref{eq:kl2} follows from~\eqref{eq:kl1} by substituting $f(x) = g(-x)$.
\end{proof}

By combining the above lemmas, we can finally prove Theorem~\ref{thm:1}.


\begin{proof}[Proof of the Theorem~\ref{thm:1}]
Fix $t > 0$. By Theorem~\ref{thm:imagine} and substitution $r = s^\alpha$, we have
\[\begin{aligned}
\int_{-\infty}^\infty \int_{-\infty}^\infty f(x)g(y) p^0_t(x, y) dx dy & = -\frac{1}{2\pi} \int_0^\infty e^{-rt}\Im\phi_4(-r)dr \\
& = -\frac{1}{2\pi} \int_0^\infty e^{-s^\alpha t} \alpha s^{\alpha - 1} \Im\phi_4(-s^\alpha) ds .
\end{aligned}\]
Lemma~\ref{lem:imaginaryPart} provides an expression for $\im \phi_4(-s^\alpha)$. Combining it with Lemmas~\ref{lm:abc} and~\ref{lm:oddeformuj2}, we obtain
\[\begin{aligned}
& \int_{-\infty}^\infty \int_{-\infty}^\infty f(x)g(y) p^0_t(x, y) dx dy \\
& \qquad = \int_0^\infty \frac{e^{-s^\alpha t}}{\cos\theta} \biggl( \int_{-\infty}^\infty e^{-sx\sin\theta} \sin(sx\cos\theta)f(x)dx\biggr) \biggl( \int_{-\infty}^\infty e^{sy\sin\theta}\sin(sy\cos\theta)g(y)dy\biggr) ds \\
& \qquad + \int_0^\infty \frac{e^{-s^\alpha t}}{\cos \theta} \biggl( \int_{-\infty}^\infty \bigl(\Gpos(s x) + e^{-s x \sin \theta} \cos(s x \cos \theta + \theta + \tfrac{\pi}{\alpha} \sign x)\bigr) f(x) dx \biggr) \times \\
& \qquad \hspace*{7.5em} \times \biggl(\int_{-\infty}^\infty \bigl(\Gpos(-s y) + e^{s y \sin \theta} \cos(s y \cos \theta - \theta + \tfrac{\pi}{\alpha} \sign y)\bigr) g(y) dy \biggr) ds .
\end{aligned}\]
Thus, the last two integrands in the right-hand side are equal to $-\Fpos(s x) f(x)$ and $-\Fpos(-s y) g(y)$, respectively. Indeed, for $s > 0$ we have
\begin{equation}
\label{eq:FnegFtau}
\begin{split}
 \Fpos(s x) & = e^{-s x \sin\theta} \sin\bigl(s |x| \cos\theta + \theta \sign x + \tfrac{\pi}{\alpha} - \tfrac{\pi}{2} \bigr) - \Gpos(s x) \\
 & = -e^{-s x \sin\theta} \cos\bigl(s |x| \cos\theta + \theta \sign x + \tfrac{\pi}{\alpha} \bigr) - \Gpos(sx) \\
 & = -e^{-s x \sin\theta} \cos\bigl(s x \cos\theta + \theta + \tfrac{\pi}{\alpha} \sign x \bigr) - \Gpos(sx) ,
\end{split}
\end{equation}
and a similar formula holds for $\Fpos(-s y)$. The proof is complete.
\end{proof}

\begin{remark}
For $\theta=0$, i.e. the symmetric case, we reproduce the result of Example~5.1 in~\cite{symzero1}. In this case function $\Gpos=\Gneg$ has explicit form
\begin{equation}
\Gpos(sx)=\frac{\alpha s^{\alpha-1}\sin\frac{\pi\alpha}{2}\sin\frac{\pi}{\alpha}}{\pi}\int_0^\infty \frac{t^\alpha}{1-2t^\alpha\cos(\pi\alpha)+t^{2\alpha}}e^{-s|x|t}dt,
\end{equation}
$\Fpos=\Fneg$ and 
\begin{equation}
\Fpos(sx)=\sin\left( |sx|+\frac{\pi}{\alpha}-\frac{\pi}{2}\right)-\Gpos(sx).
\end{equation}
\end{remark}

\section{Hitting time}
In this section we obtain the formula for $P(\tau^0 > t)$. Our starting point is the expression for the Laplace transform of $\tau^0$:
\[ \int_0^\infty e^{-\lambda t}\PP^x(\tau_0> t)dt = \frac{1-\EE^x e^{-\lambda\tau_0}}{\lambda} = \frac{1}{\lambda} - \frac{u_\lambda(-x)}{\lambda u_\lambda(0)} \]
(see~\eqref{expected}). Our plan is as follows: with the notation of Section~\ref{sec:contour}, we define
\[
 \phi_5(\lambda) = \frac{\laplace g(0)}{\lambda} - \frac{\phi_2(\lambda)}{\lambda \phi_0(\lambda)}
\]
when $\lambda \in \C \setminus (-\infty, 0]$, and, as usual, we let $\phi_5(-\lambda) = \lim_{\eps \to 0^+} \phi_5(-\lambda + i \eps)$ when $\lambda > 0$. First, we will show that whenever $g \in \Ha$, we have
\begin{equation}
\label{eq:tau:goal}
\begin{aligned}
\int_0^\infty \int_\RR e^{-\lambda t}\PP^x(\tau_0 > t) g(x) dx dt & = -\frac{1}{\pi}\int_0^\infty \int_0^\infty e^{-\lambda t}e^{-rt}\Im \phi_5(-r)drdt \\ & \hspace*{-3em} = -\frac{1}{\pi\cos\theta}\int_0^\infty \int_0^\infty e^{-\lambda t}e^{-s^\alpha t}\int_\RR \frac{\Fneg(sx)}{s}g(x)dx ds dt .
\end{aligned}
\end{equation}
Then, we will change the order of integration. The desired result will then follow by a density-type argument: the class of admissible functions $g$ is dense in an appropriate sense, and both sides of~\eqref{formula:1} are continuous functions of $x$. Note, however, that changing the order of integration is not merely an application of Fubini's theorem: the integral in the right-hand side of~\eqref{eq:tau:goal} is \emph{not} absolutely convergent. For this reason, we will first deform the contour of integration, only then apply Fubini's theorem, and then deform the contour back.

\begin{lemma}\label{lm:hit:sqint}
The function $\lambda \phi_5(-\lambda^2)$ is in the Nevanlinna class $\mathcal{N}^+$, and it is in $\leb^p(\RR)$ for some $p > 1$.
\end{lemma}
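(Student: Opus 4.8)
### Proof Proposal

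The plan is to mirror the structure of the proof of Lemma~\ref{lm:bounded.type}, since $\phi_5$ is built from the same ingredients $\phi_0$ and $\phi_2$ as $\phi_4$, only combined differently. Recall $\phi_5(\lambda) = \laplace g(0)/\lambda - \phi_2(\lambda)/(\lambda\phi_0(\lambda))$. First I would note that $\lambda \mapsto \laplace g(0)/(-\lambda^2)\cdot\lambda = -\laplace g(0)/\lambda$ is (a constant multiple of) $1/(-\lambda)$, which is a Stieltjes function, hence outer, hence in $\mathscr{N}^+$ by Lemma~\ref{lm:BTcircle} (using that $\laplace g(0)$ is real). For the second term, observe that
\[
\lambda \cdot \frac{\phi_2(-\lambda^2)}{(-\lambda^2)\phi_0(-\lambda^2)} = -\frac{1}{\lambda}\cdot\frac{\lambda\phi_2(-\lambda^2)}{\lambda\phi_0(-\lambda^2)} \, .
\]
As shown in the proof of Lemma~\ref{lm:bounded.type}, the function $\lambda\phi_2(-\lambda^2)$ is in $\mathscr{N}^+$ (it is a combination of four Stieltjes functions, using that $h_2$ is a difference of its positive and negative parts), and $\lambda\phi_0(-\lambda^2)$ is outer. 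By Lemma~\ref{lm:sum.is.N}, the ratio $\lambda\phi_2(-\lambda^2)/(\lambda\phi_0(-\lambda^2))$ is in $\mathscr{N}^+$; multiplying by $-1/\lambda$, which is outer and hence in $\mathscr{N}^+$, keeps it in $\mathscr{N}^+$ by Lemma~\ref{lm:sum.is.N} again. Finally, the sum of the two terms is in $\mathscr{N}^+$, again by Lemma~\ref{lm:sum.is.N}. This settles the Nevanlinna-class claim.

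For the $\leb^p$ membership, I would estimate $|\phi_5(-\lambda^2)\lambda|$ on the positive real axis using the boundary-value description from Sokhozki's formula, exactly as in Lemmas~\ref{lem:imaginaryPart}--\ref{lem:phi4}. Writing $\lambda = s^\alpha$ (or rather working with $\phi_5(-s^\alpha)$ directly and then substituting), we have $\phi_2(-s^\alpha) = K_2(s) - iL_2(s)$ and $\phi_0(-s^\alpha) = K_0(s) - iL_0(s)$, so
\[
\phi_5(-s^\alpha) = -\frac{\laplace g(0)}{s^\alpha} + \frac{K_2(s) - iL_2(s)}{s^\alpha(K_0(s) - iL_0(s))} \, .
\]
Since $K_0^2 + L_0^2 = (\cos\theta/(\alpha s^{\alpha-1}\sin\tfrac{\pi}{\alpha}))^2$ is bounded below by a positive multiple of $s^{2-2\alpha}$, the denominator is harmless, and the bound reduces to the estimates $|K_2(s)| \le Cs^{1-\alpha}$ for $s<1$ and $|K_2(s)| \le Cs^{-\alpha}\log(1+s)$ for $s\ge 1$ from Lemma~\ref{lm:square:Kfg}, together with $|L_2(s)| \le C_1 s^{1-\alpha}\min\{1,s^{-1}\}$ from Lemma~\ref{lm:7}. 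Carrying this through, one finds (with $\lambda = s^\alpha$, so $d\lambda \asymp s^{\alpha-1}ds$) a bound of the form $|\lambda\phi_5(-\lambda^2)|$ behaving like $\lambda^{1/\alpha - 1}$ near $0$ and like $\lambda^{-1/2}\log(1+\lambda)$ (or similar) at infinity, possibly with an extra $\lambda^{1/\alpha-1/2}$-type contribution from the $\laplace g(0)/s^\alpha$ term; in any case the exponents are strictly between $-1$ and $0$ near zero and strictly less than $-1/2$ at infinity, so there is a range of $p > 1$ for which $\int_0^\infty |\lambda\phi_5(-\lambda^2)|^p d\lambda < \infty$.

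The main obstacle I anticipate is the behaviour of the term $\laplace g(0)/\lambda$, which individually is \emph{not} in $\leb^p(\RR)$ for any $p$ when viewed as $\lambda\phi_5(-\lambda^2)$ contributes a pure $1/\lambda$ singularity. One must check that this singularity is cancelled (or at least tamed) by the corresponding singular part of $\phi_2(\lambda)/(\lambda\phi_0(\lambda))$: since $\phi_2(0^+)$ is finite and $\phi_0(\lambda) \sim c\,\lambda^{1/\alpha-1}$ as $\lambda \to 0^+$, the ratio $\phi_2(\lambda)/(\lambda\phi_0(\lambda))$ behaves like $\lambda^{-1/\alpha}$ near zero rather than $\lambda^{-1}$, so actually the $\laplace g(0)/\lambda$ term is the \emph{dominant} one near zero and it is $\phi_5$ as a whole that inherits a $1/\lambda$ singularity. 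This would mean $\lambda\phi_5(-\lambda^2)$ has a constant limit as $\lambda\to 0$, which is fine for $\leb^p$ near zero on any bounded interval. The real care is therefore at infinity, where one needs $\laplace g(0)/\lambda \to 0$ fast enough — which it does, like $\lambda^{-1}$ — so the whole expression decays at least like $\lambda^{-1/2}$ times logarithms, giving $\leb^p$ integrability at infinity for $p$ close enough to $2$. Pinning down the precise admissible range of $p$ is a routine but slightly delicate bookkeeping exercise with the exponents.
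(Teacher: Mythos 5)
Your argument for the Nevanlinna-class part is correct and follows the same route as the paper: decompose $\lambda\phi_5(-\lambda^2)$ into the two obvious pieces, observe that $1/\lambda$ is Stieltjes hence outer, recall from the proof of Lemma~\ref{lm:bounded.type} that $\lambda\phi_2(-\lambda^2)\in\mathscr{N}^+$ and $\lambda\phi_0(-\lambda^2)$ is outer, and close with Lemma~\ref{lm:sum.is.N}.

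The $\leb^p$ part, however, contains a genuine error in the analysis near $\lambda = 0$, and the error is fatal: if your reasoning there were right, the lemma would be \emph{false}. You assert that $\phi_2(0^+)$ is finite. It is not: since $h_2(r) \to \laplace g(0)\cos\theta \ne 0$ as $r\to 0^+$ and $\alpha > 1$, the integral $\phi_2(\lambda) = \frac{1}{\pi}\int_0^\infty h_2(r)/(r^\alpha+\lambda)\,dr$ diverges as $\lambda\to 0^+$, and in fact $\phi_2(\lambda) \sim \laplace g(0)\,\phi_0(\lambda) \sim c\,\laplace g(0)\,\lambda^{1/\alpha - 1}$. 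Consequently $\phi_2(\lambda)/(\lambda\phi_0(\lambda)) \sim \laplace g(0)/\lambda$, so the $1/\lambda$ singularity in $\phi_5(\lambda) = \laplace g(0)/\lambda - \phi_2(\lambda)/(\lambda\phi_0(\lambda))$ is \emph{cancelled}, leaving the weaker $O(\lambda^{-1/\alpha})$. Had the $1/\lambda$ singularity survived, as you claim, one would get $\lambda\phi_5(-\lambda^2) \sim -\laplace g(0)/\lambda$ near $\lambda = 0$, which is not in $\leb^p$ for any $p\ge 1$; this also corrects a second slip in your write-up, where you say that $\phi_5(\lambda)\sim c/\lambda$ would force $\lambda\phi_5(-\lambda^2)$ to have a constant limit (it would force $\lambda\phi_5(-\lambda^2)\sim -c/\lambda$).

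The cancellation near zero is precisely the crux the paper's proof is built around. It writes $\phi_5(-s^\alpha)$ with the numerator $\bigl(K_2(s) - K_0(s)\laplace g(0)\bigr) - i\bigl(L_2(s) - L_0(s)\laplace g(0)\bigr)$ and derives the gain from the fact that $\laplace g(ise^{i\theta}) - \laplace g(0) = O(s)$ (via Lemmas~\ref{lm:7} and~\ref{lm:7p}), re-running the proof of Lemma~\ref{lm:square:Kfg0} with the modified kernel $h(r) = \laplace g(ire^{i\theta}) - \laplace g(0)$. This yields $|\phi_5(-s^\alpha)| \le C\min\{s^{-1}, s^{-\alpha}\}$, hence $|\lambda\phi_5(-\lambda^2)| \le C\min\{\lambda^{1 - 2/\alpha}, \lambda^{-1}\}$, which is in $\leb^p$ for $p\in\bigl(1, \tfrac{\alpha}{2-\alpha}\bigr)$. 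Without isolating and quantifying this cancellation the integrability near $\lambda = 0$ cannot be established, so the ``routine bookkeeping'' you defer is in fact where the real work lies.
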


\begin{proof}
The proof of the first part of the lemma is a minor modification of the proof of Lemma~\ref{lm:bounded.type}: the functions $\lambda$ and $\lambda \phi_0(-\lambda^2)$ are outer functions, $\lambda \phi_2(-\lambda^2)$ is in $\mathcal{N}^+$, and therefore $\lambda \phi_5(-\lambda^2)$ is in $\mathcal{N}^+$. In order to prove the other part of the lemma, we need to show that
\begin{equation}
\int_\RR |\lambda \phi_5(-\lambda^2)|^2 d\lambda < \infty.
\end{equation}
By Sokhozki formula (see the proof of Lemma~\ref{lem:imaginaryPart}), we have
\[
 \begin{split}
 \phi_5(-s^\alpha) & = \frac{\laplace g(0)}{-s^\alpha} - \frac{K_2(s)-iL_2(s)}{-s^\alpha (K_0(s)-iL_0(s))} \\
 & = \frac{K_2(s)-iL_2(s) - (K_0(s) - iL_0(s)) \laplace g(0)}{s^\alpha (K_0(s)-iL_0(s))}.
 \end{split}
\]
By the definition of $L_2(s)$, we have
\[
 L_2(s) - L_0(s) \laplace g(0) = \frac{\re(e^{i \theta} \laplace g(i s e^{i \theta})) - \laplace g(0) \cos \theta}{\alpha s^{\alpha - 1}} = \frac{\re(e^{i \theta} (\laplace g(i s e^{i \theta}) - \laplace g(0)))}{\alpha s^{\alpha - 1}} ,
\]
and hence
\[
 |L_2(s) - L_0(s) \laplace g(0)| \le \frac{|\laplace g(i s e^{i \theta}) - \laplace g(0)|}{\alpha s^{\alpha - 1}} .
\]
Using the bounds $|\laplace g(z)| \le C \min\{1, |z|^{-1}\}$ and $|\laplace g'(z)| \le C \min\{1, |z|^{-2}\}$ (see Lemmas~\ref{lm:7} and~\ref{lm:7p}), we arrive at
\[
 |L_2(s) - L_0(s) \laplace g(0)| \le \frac{C \min\{s, 1\}}{\alpha s^{\alpha - 1}} .
\]
In a very similar way, by the definition of $K_2(s)$,
\[
 \begin{split}
 K_2(s) - K_0(s) \laplace g(0) & = \frac{1}{\pi} \pvint_0^\infty \frac{\re(e^{i \theta} \laplace g(i r e^{i \theta})) - \laplace g(0) \cos \theta}{r^\alpha - s^\alpha} \, dr \\
 & = \frac{1}{\pi} \pvint_0^\infty \frac{\re(e^{i \theta} (\laplace g(i r e^{i \theta}) - \laplace g(0)))}{r^\alpha - s^\alpha} \, dr ,
 \end{split}
\]
and by Lemmas~\ref{lm:7} and~\ref{lm:7p}, the function $h(r) = \laplace g(i r e^{i \theta}) - \laplace g(0)$ satisfies $|h(r)| \le C \min\{r, 1\}$ and $|h'(r)| \le C \min\{1, r^{-2}\}$. Repeating the proof of Lemma~\ref{lm:square:Kfg0}, with appropriately modified estimates~\eqref{eq:Kfg0:1}, \eqref{eq:Kfg0:2} and~\eqref{eq:Kfg0:3}, we find that 
\[
 |K_2(s) - K_0(s) \laplace g(0)| \le \frac{C_1 \log(1 + 2 s)}{s^{\alpha - 1} (1 + s)} + C_2 s^{1 - \alpha} \min\{s, 1\} + C_3 \min\{s^{1 - \alpha}, 1\} ,
\]
and therefore
\[
 |K_2(s) - K_0(s) \laplace g(0)| \le C \min\{1, s^{1 - \alpha}\} .
\]
The above bounds and the definitions of $K_0(s)$ and $L_0(s)$ imply that
\[
 |\phi_5(-s^\alpha)| \le \frac{C \min\{1, s^{1 - \alpha}\}}{s^\alpha s^{1 - \alpha}} = C \min\{s^{-1}, s^{-\alpha}\} .
\]
We conclude that
\[
 |\lambda \phi_5(-\lambda^2)| \le C \min\{\lambda^{1 - 2 / \alpha}, \lambda^{-1}\} ,
\]
so that $\lambda \phi_5(-\lambda^2)$ is in $\leb^p(\RR)$ for every $p \in (1, \tfrac{\alpha}{2 - \alpha})$.
\end{proof}

\begin{lemma}
We have, for $\lambda>0$, 
\label{lm:prob.cauchy}
\begin{equation}\label{eq:prob:cauchy}
\phi_5(\lambda)=-\frac{1}{\pi}\int_0^\infty \int_0^\infty e^{-\lambda t}e^{-st}\Im \phi_5(-s)dsdt.
\end{equation}
\end{lemma}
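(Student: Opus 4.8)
The plan is to follow the scheme of the proof of Theorem~\ref{thm:imagine}, with the Cauchy--Stieltjes kernel $1/(s+\lambda)$ rewritten as the Laplace transform $\int_0^\infty e^{-\lambda t}e^{-st}\,dt$. Concretely, I would apply Corollary~\ref{cor:branges} to the function $f(z) = \sqrt{z}\,\phi_5(z)$, $z \in \C \setminus (-\infty, 0]$, to obtain the Cauchy--Stieltjes representation $\phi_5(\lambda) = -\tfrac1\pi \int_0^\infty \frac{\im \phi_5(-s)}{s+\lambda}\,ds$, and then convert this into~\eqref{eq:prob:cauchy} by Fubini's theorem.

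First I would verify the hypotheses of Corollary~\ref{cor:branges} for $f(z) = \sqrt{z}\,\phi_5(z)$. Analyticity of $\phi_5$ in $\C \setminus (-\infty, 0]$ is clear for the summand $\laplace g(0)/\lambda$ and for $\phi_2$, and for $\phi_0$ it suffices to note that $\phi_0$ has no zeros in $\C \setminus (-\infty,0]$: indeed $\lambda \phi_0(-\lambda^2)$ is an outer function (see the proofs of Lemmas~\ref{lm:bounded.type} and~\ref{lm:hit:sqint}), hence zero-free in the upper half-plane, and $\phi_0(\lambda) > 0$ for $\lambda > 0$. For $\lambda > 0$ the quantities $\phi_0(\lambda)$, $\phi_2(\lambda)$ and $\laplace g(0)$ are real, so $f$ is real-valued on $(0, \infty)$. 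Since $\sqrt{-\xi^2} = -i\xi$ for $\xi$ in the upper half-plane, we have $f(-\xi^2) = -i\,\xi\phi_5(-\xi^2)$, which belongs to $\mathscr{N}^+$ by Lemma~\ref{lm:hit:sqint}. Finally, the boundary limit is $f(-s^2) = i s\,\phi_5(-s^2)$ for $s > 0$, so $|f(-s^2)| = |s\,\phi_5(-s^2)|$ and $\int_0^\infty |f(-s^2)|^p\,ds = \tfrac12 \int_\RR |\lambda \phi_5(-\lambda^2)|^p\,d\lambda < \infty$ for some $p > 1$, again by Lemma~\ref{lm:hit:sqint}.

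With the hypotheses verified, the second formula of Corollary~\ref{cor:branges}, together with the boundary value $f(-s) = \sqrt{-s}\,\phi_5(-s) = i\sqrt{s}\,\phi_5(-s)$ (so that $\re f(-s) = -\sqrt{s}\,\im \phi_5(-s)$), gives
\[
\sqrt{\lambda}\,\phi_5(\lambda) = \frac1\pi \int_0^\infty \frac{\sqrt{\lambda}}{\sqrt{s}} \, \frac{\re(i\sqrt{s}\,\phi_5(-s))}{s+\lambda}\,ds = -\frac{\sqrt{\lambda}}{\pi} \int_0^\infty \frac{\im \phi_5(-s)}{s+\lambda}\,ds ,
\]
and, dividing by $\sqrt{\lambda} > 0$, $\phi_5(\lambda) = -\tfrac1\pi \int_0^\infty \frac{\im \phi_5(-s)}{s+\lambda}\,ds$ for $\lambda > 0$. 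To finish, I would substitute $\frac{1}{s+\lambda} = \int_0^\infty e^{-\lambda t} e^{-st}\,dt$ and interchange the order of integration; this is justified because, by the bound $|\phi_5(-s)| \le C \min\{s^{-1/\alpha}, s^{-1}\}$ extracted from Lemma~\ref{lm:hit:sqint} (with $1/\alpha < 1$), the double integral $\int_0^\infty\int_0^\infty e^{-\lambda t} e^{-st}\,|\im \phi_5(-s)|\,dt\,ds = \int_0^\infty \frac{|\im \phi_5(-s)|}{s+\lambda}\,ds$ is finite. Fubini's theorem then yields~\eqref{eq:prob:cauchy}.

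Since all of the analytic input is packaged in Lemma~\ref{lm:hit:sqint}, this argument is short and essentially a streamlined version of the proof of Theorem~\ref{thm:imagine}; there is no serious obstacle. The only points that deserve a little care are tracking the principal branch of $\sqrt{\cdot}$ — both to extract $\im \phi_5$ rather than $\re \phi_5$ and to get the sign right — and observing that $\phi_0$ is zero-free, so that $\phi_5$ is genuinely analytic in $\C \setminus (-\infty, 0]$ and the application of Corollary~\ref{cor:branges} is legitimate.
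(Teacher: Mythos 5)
Your proposal is correct and follows essentially the same route as the paper: both apply Corollary~\ref{cor:branges} to $\sqrt{\lambda}\,\phi_5(\lambda)$, using Lemma~\ref{lm:hit:sqint} to verify the hypotheses, and then pass from the Cauchy--Stieltjes representation to the iterated Laplace transform. You have filled in details the paper leaves implicit (the zero-freeness of $\phi_0$ via its explicit power form or the outer-function argument, the principal-branch bookkeeping needed to extract $\im\phi_5$ from the second formula of Corollary~\ref{cor:branges}, and the Fubini justification via the bound $|\phi_5(-s)|\le C\min\{s^{-1/\alpha},s^{-1}\}$), but the structure is identical to the paper's proof.
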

\begin{proof}
As in the proof of Theorem~\ref{thm:imagine}, we find that, by Lemma~\ref{lm:hit:sqint}, the function $\sqrt{\lambda} \phi_5(\lambda)$ satisfies the assumptions of Corollary~\ref{cor:branges}. It follows that for $\lambda \in \C \setminus (-\infty, 0]$,
\[
\sqrt{\lambda} \phi_5(\lambda) = -\frac{\sqrt{\lambda}}{\pi} \int_0^\infty \frac{\im \phi_5(-r)}{r + \lambda} \, dr ,
\]
which implies~\eqref{eq:prob:cauchy}.
\end{proof}

\begin{proof}[Proof of Theorem~\ref{thm:3}]
Let $g\in\Ha$. Recall that for $t > 0$,
\[
 \int_0^\infty e^{-\lambda t}\PP^x(\tau_0> t)dt = \frac{1}{\lambda} - \frac{u_\lambda(-x)}{\lambda u_\lambda(0)} .
\]
Since $g$ is integrable, by Fubini's theorem, for every $\lambda > 0$ we have
\begin{equation}
\int_0^\infty \int_\RR e^{-\lambda t} \PP^x(\tau_0> t) g(x)dxdt= \frac{\laplace g(0)}{\lambda} - \frac{1}{\lambda u_\lambda(0)} \int_\RR u_\lambda(-x) g(x)dx.
\end{equation}
By Plancherel's theorem and an argument used in Lemma~\ref{lem:fgp2},
\begin{equation}
 \int_\RR u_\lambda(-x)g(x)dx = \frac{1}{2 \pi} \int_\RR \frac{\laplace g(i\xi)}{\psi(\xi)+\lambda}d\xi=\phi_2(\lambda) ,
\end{equation}
and similarly $u_\lambda(0) = \phi_0(\lambda)$. Therefore,
\begin{equation}
\int_0^\infty \int_\RR e^{-\lambda t} \PP^x(\tau_0> t) g(x)dxdt= \frac{\laplace g(0)}{\lambda} - \frac{\phi_2(\lambda)}{\lambda \phi_0(\lambda)} = \phi_5(\lambda) .
\end{equation}
By Lemma~\ref{lm:prob.cauchy} we obtain
\begin{equation}
\begin{split}
\int_0^\infty \int_\RR e^{-\lambda t} \PP^x(\tau_0> t) g(x)dxdt & = -\frac{1}{\pi}\int_0^\infty \int_0^\infty e^{-\lambda t}e^{-rt}\Im \phi_5(-r)drdt \\
& = \frac{1}{\pi}\int_0^\infty \int_0^\infty e^{-\lambda t}e^{-s^\alpha t}\alpha s^{\alpha-1}\Im \frac{\phi_2(-s^\alpha)}{-s^\alpha\phi_0(-s^\alpha)}dsdt
\end{split}
\end{equation}
for every $\lambda > 0$. From the uniqueness of the Laplace transform we get, for almost all $t > 0$,
\begin{equation}
\int_\RR \PP^x(\tau_0> t) g(x)dx = - \frac{\alpha}{\pi} \int_0^\infty \frac{e^{-s^\alpha t}}{s} \im \frac{\phi_2(-s^\alpha)}{\phi_0(-s^\alpha)} ds.
\end{equation}
By Lemma~\eqref{lm:oddeformuj2}, equality~\eqref{eq:FnegFtau} and the fact that $\phi_j(-s^\alpha)= K_j(s) - i L_j(s)$, $j=0,2$, we get
\begin{equation}
\label{eq:hit:exp1}
\begin{split}
\im \frac{\phi_2(-s^\alpha)}{\phi_0(-s^\alpha)} & = \Im \frac{K_2(s) - i L_2(s)}{K_0(s) - i L_0(s)} \\
 & = \frac{\sin \tfrac{\pi}{\alpha}}{\cos \theta} \int_{-\infty}^\infty (\Gneg(s x) + e^{s x \sin \theta} \cos(s x \cos \theta - \theta + \tfrac{\pi}{\alpha} \sign x)) g(x) dx \\
 & = - \frac{\sin \tfrac{\pi}{\alpha}}{\cos\theta} \int_\RR \Fneg(sx) g(x)dx .
\end{split}
\end{equation}
We have thus proved that for almost all $t > 0$,
\begin{equation}
\label{prob:contour}
\int_\RR \PP^x(\tau_0> t) g(x)dx = \frac{\alpha \sin \tfrac{\pi}{\alpha}}{\pi \cos\theta} \int_0^\infty \frac{e^{-s^\alpha t}}{s} \int_\RR \Fneg(sx) g(x)dx ds.
\end{equation}
We now change the order of integration in the right-hand side of~\eqref{prob:contour}. With no loss of generality, we assume that $x \sin \theta \ge 0$; the other case is dealt with in a similar manner. Recall that 
\[
 \Fneg(sx)= e^{sx \sin\theta}\sin\left(s|x|\cos\theta-\theta\sign x +\frac{\pi}{\alpha}-\frac{\pi}{2}\right)-\Gneg(sx).
\]
We split the integral in the right-hand side of~\eqref{prob:contour} into three parts:
\[
 \int_0^\infty e^{-s^\alpha t}\int_\RR \frac{\Fneg(sx)}{s}g(x)dx ds = I_1 + I_2 + I_3,
\]
where
\[
\begin{aligned}
 I_1 & = \int_0^\infty e^{-s^\alpha t}\int_{-\infty}^0 \frac{\Fneg(sx)}{s}g(x)dx ds , \\
 I_2 & = \int_0^\infty e^{-s^\alpha t}\int_0^\infty \frac{e^{-s x} \sin \varphi - \Gneg(sx)}{s}g(x)dx ds , \\
 I_3 & = \int_0^\infty e^{-s^\alpha t}\int_0^\infty \frac{e^{sx \sin\theta}\sin(s|x|\cos\theta + \varphi) - e^{-s x} \sin \varphi}{s}g(x)dx ds ,
\end{aligned}
\]
with the notation $\varphi = \tfrac{\pi}{\alpha} - \tfrac{\pi}{2} - \theta$. Recall that by Lemma~\ref{lm:Hold}, the function $\Gneg$ is H\"older continuous on $(-\infty, 0)$ with exponent $\alpha - 1$, and $\Gneg(0^-) = \sin(\tfrac{\pi}{\alpha} - \tfrac{\pi}{2} + \theta)$. It follows that $\Fneg$ is H\"older continuous on $(-\infty, 0]$ with exponent $\alpha - 1$, and $\Fneg(0) = 0$. Furthermore, $\Fneg$ is bounded on $(-\infty, 0]$, and hence $|\Fneg(s x)| \le C \min\{1, (s |x|)^{\alpha - 1}\}$ for $x \in (-\infty, 0]$. Finally, by the definition of $\Ha$, we have $|g(x)| \le C \min\{1, |x|^{-\delta |x|}\}$. We conclude that the integrand in the double integral in the definition of $I_1$ is bounded by $C \min\{1, (s |x|)^{\alpha - 1}\} s^{-1} \min\{1, |x|^{-\delta x}\} e^{-s^\alpha t}$, and
\[
\begin{aligned}
 & \int_{-\infty}^0 \int_0^\infty \min\{1, (s |x|)^{\alpha - 1}\} s^{-1} \min\{1, |x|^{-\delta x}\} e^{-s^\alpha t} ds dx \\
 & \qquad \le \int_{-\infty}^0 \biggl(\int_0^{1 / |x|} (s |x|)^{\alpha - 1} s^{-1} ds + \int_{1 / |x|}^\infty s^{-1} e^{-s^\alpha t} ds\biggr) \min\{1, |x|^{-\delta x}\} dx \\
 & \qquad \le \int_{-\infty}^0 (C(t) + \log (1 + |x|^{-1})) \min\{1, |x|^{-\delta x}\} dx < \infty .
\end{aligned}
\]
Thus, the integral in the definition $I_1$ converges absolutely, and by Fubini's theorem,
\[
 I_1 = \int_{-\infty}^0 g(x) \int_0^\infty e^{-s^\alpha t} \frac{\Fneg(sx)}{s} ds dx .
\]
A similar argument applies to $I_2$: again by Lemma~\ref{lm:Hold}, $e^{-s x} \sin \varphi - \Gneg(sx)$ is bounded and H\"older continuous on $(0, \infty)$ with right limit at $0$ equal to zero, so that we may use Fubini's theorem. It follows that
\[
 I_2 = \int_0^\infty g(x) \int_0^\infty e^{-s^\alpha t} \frac{e^{-s x} \sin \varphi - \Gneg(sx)}{s} ds dx .
\]
The integral $I_3$, however, requires a more subtle treatment. We split it further into two parts, which are dealt with in a very similar way: since
\[
 e^{s x \sin \theta} \sin(sx\cos\theta+\varphi) = \frac{e^{i(sxe^{-i \theta}+\varphi)} - e^{-i(sxe^{i \theta}+\varphi)}}{2i} ,
\]
we have $I_3 = (I_4 - I_5) / (2 i)$, where
\[
\begin{aligned}
 I_4 & = e^{i \varphi} \int_0^\infty e^{-s^\alpha t}\int_0^\infty \frac{e^{i s x e^{-i \theta}} - e^{-s x}}{s}g(x)dx ds , \\
 I_5 & = e^{-i \varphi} \int_0^\infty e^{-s^\alpha t}\int_0^\infty \frac{e^{-i s x e^{i \theta}} - e^{-s x}}{s}g(x)dx ds .
\end{aligned}
\]
Recall that, by the definition of $\Ha$, $g$ extends to an analytic function in $\CC\setminus i\RR$, which is bounded by $C \min \{ 1, |x|^{-\delta |x|}\}$ in the sector $\{ x\in \CC: |\arg x|\leqslant |\theta|\}$. Furthermore, $\exp(isxe^{-i\theta}) - \exp(-s x)$ is an entire function of $x$, bounded by $2 e^{s|x|}$. Hence we may deform the contour of integration in the inner integral from $(0, \infty)$ to $(0, e^{i \theta} \infty)$, and find that
\begin{equation}
\label{eq:i4}
\begin{split}
 I_4 & = e^{i \varphi} \int_0^\infty e^{-s^\alpha t }\int_{(0, e^{i \theta} \infty)} \frac{e^{i s x e^{-i \theta}} - e^{-s x}}{s}g(x)dx ds \\
 & = e^{i \varphi} \int_0^\infty e^{-s^\alpha t}\int_0^\infty \frac{e^{i r s} - e^{-r s e^{i \theta}}}{s}g(e^{i \theta} r) e^{i \theta} dr ds .
\end{split}
\end{equation}
The exponential function is Lipschitz continuous in the right complex half-plane, with Lipschitz constant $1$. Therefore, $|e^{i r s + i \varphi} - e^{-r s e^{i \theta} + i \varphi}| \le \min\{2, 2 r s\}$ when $r, s > 0$. By the argument used in the analysis of $I_1$, it follows that the double integral in~\eqref{eq:i4} converges absolutely, and so, by Fubini's theorem,
\[
\begin{split}
 I_4 & = e^{i \varphi} \int_0^\infty g(e^{i \theta} r) e^{i \theta} \int_0^\infty e^{-s^\alpha t} \frac{e^{i r s} - e^{-r s e^{i \theta}}}{s} ds dr \\
 & = e^{i \varphi} \int_{(0, e^{i \theta} \infty)} g(x) \int_0^\infty e^{-s^\alpha t} \frac{e^{i s x e^{-i \theta}} - e^{-s x}}{s} ds dx .
\end{split}
\]
By Lemma~\ref{lem:aux:int} and the inequality $\tfrac{\pi}{2} + |\theta| \le \tfrac{\pi}{\alpha} < \tfrac{\pi}{2} + \tfrac{\pi}{2 \alpha}$, the inner integral is a bounded analytic function in the sector $|\arg x| \le \theta$. Since $|g(x)| \le C \min\{1, |x|^{-\delta |x|}\}$ in this sector, we may deform the contour of integration, and eventually find that
\[
 I_4 = e^{i \varphi} \int_0^\infty g(x) \int_0^\infty e^{-s^\alpha t} \frac{e^{i s x e^{-i \theta}} - e^{-s x}}{s} ds dx ,
\]
which is identical to the definition of $I_4$, except that the integrals are in reverse order.

A very similar argument shows that the order of integration can be reversed in the definition of $I_5$, and thus also in $I_3$. We conclude that
\[
 \int_0^\infty e^{-s^\alpha t}\int_\RR \frac{\Fneg(sx)}{s}g(x)dx ds = I_1 + I_2 + I_3 = \int_\RR g(x) \int_0^\infty e^{-s^\alpha t} \frac{\Fneg(sx)}{s} ds dx ,
\]
By~\eqref{prob:contour}, for almost every $t > 0$ and every $g \in \Ha$,
\[
 \int_\RR g(x) \PP^x(\tau_0 > t) dx = \frac{\alpha \sin \tfrac{\pi}{\alpha}}{\pi \cos \theta} \int_\RR g(x) \int_0^\infty \frac{e^{-s^\alpha t}}{s} \, \Fneg(sx) ds dx .
\]
By Lemma~\ref{lm:density}, we have
\[
 \PP^x(\tau_0 > t) = \frac{1}{\pi \cos \theta} \int_0^\infty \frac{e^{-s^\alpha t}}{s} \, \Fneg(sx) ds
\]
for almost all $x \in \RR \setminus \{0\}$ and $t > 0$. Since both sides are jointly continuous functions of $x \in \RR \setminus \{0\}$ and $t > 0$ (the right-hand side by a simple application of Lebesgue's dominated convergence theorem), the above equality in fact holds for all $x \in \RR \setminus \{0\}$ and $t > 0$, and the proof is complete.
\end{proof}

\section*{Acknowledgement}
I  express my gratitude to Professor Mateusz Kwaśnicki for his guidance and numerous comments to the preliminary version of this article.

\end{document}